\newtheorem{theo}{Theorem}[section]
\newtheorem{cor}[theo]{Corollary}
\newtheorem{lem}[theo]{Lemma}
\newtheorem{rem}[theo]{Remark}
\def\Cal{\mathcal}
\def\cC{{\Cal C}}
\def \bre {{\mathbb {R}}}
\def\bar{\overline}
\def\bna{\Bbb N}
\def\llim{\mathop{\longrightarrow}}
\def\vo{V^{(0)}}
\def\von{V^{(1)}}
\def\vn#1{V^{(#1)}}
\def\s,{\quad $\,$}
\font\pio=cmr10 scaled 1230
\def\empti{{\text{\rm \pio \o}}}
\def\qua{\quad$\,$}
\def\s,{\quad $\,$}
\def\witi{\widetilde}
\def\disp{\displaystyle}
\def\cite#1{[#1]}
\def\diam{\text{\rm diam}}
\def\asd{$\alpha$-scaling distance}
\def\pal{^{(\alpha)}}
\def\Dal{D_{\alpha}}
\def\wdal{\witi d_{\alpha}}
\def\path{ \big({\Cal Pa}\big)} 
\def\tpath{ \big(\witi{\Cal Pa}\big)} 
\def\jet{ \widehat J\times W^*} 
\def \alw{\alpha_ {\bf \bar w}}
\def\cp{\Cal P}
\def\tw{ w^*}
\def\balpha{{\bf \alpha}}
\begin{document}

\title{
 SCALING Distances on Finitely Ramified Fractals}

\maketitle
 \author{Roberto Peirone \footnote{
  : e-mail: {\sf peirone@axp.mat.uniroma2.it}, Phone: +39\,0672594610,
    Fax: +39\,0672594699}},
  {Universit\`a di Roma-Tor Vergata,
   Dipartimento di Matematica,        
Via della Ricerca Scientifica, 00133 Roma, Italy}

\begin{abstract}
In previous papers by A. Kameyama and by J. Kigami 
distances on fractals have been discussed having two different 
but similar properties.
One property is that the maps defining the fractal are Lipschitz
of prescribed constants less than $1$, the other is that the diameters of the 
copies of the fractal  are asymptotic to prescribed scaling factors.
In this paper, on  a large class of finitely ramified fractals,
we  prove that  these two problems are equivalent and give
 a necessary and sufficient condition for the existence of such distances.
Such a condition
is expressed in terms of asymptotic behavior of the product of certain matrices
associated to the fractal.  
\end{abstract}

MSC: 28A80, 05C12, 15B48\quad\quad Key Words: Fractals, Distances on fractals,
Sequences of matrices

\section{Introduction}
Fractals are very irregular  mathematical objects.
Over the last three decades,
 they have been investigated rather intensively.
 The notion of fractal  is very general, and in this paper 
 a specific but rather general class of fractals is considered.
 More ore less, we consider the so-called self-similar fractals,
 i.e., geometric objects having the property of
 containing copies of them at arbitrarily small
 scales. A concrete and relatively general
 class of self-similar fractals
 is the following:  we are given finitely many contractive similarities 
 $\psi_1,...,\psi_k$ in $\bre^n$, that is

 $$d\big(\psi_i (x),\psi_i(x')\big)=\witi\alpha_i d(x,x') \eqno (1.1)$$
 
 for every $x,x'\in K$, where $\witi\alpha_i$ are constant
 lying in $]0,1[$. The self-similar fractal
  $K$ (generated by the similarities $\psi_i$) is the
   set  invariant with respect to such a set
 of similarities, more precisely, $K$ is the only nonempty compact
 subset $K$  of $\bre^n$ such that
 
 $$K=\bigcup\limits_{i=1}^k \psi_i(K).\eqno (1.2)$$
 
 More generally, we could define a self-similar fractal 
 as a compact metric space, or also as a compact topological space
 satisfying (1.2) where $\psi_i$ are maps satisfying
 certain conditions (see \cite{3}, \cite{4}).
 Examples of self-similar fractals are the Cantor set,
 the Koch curve, the (Sierpinski) Gasket, the (Sierpinski) Carpet,
 the Vicsek Set, the (Lindstr\o m) Snowflake. See for example 
 \cite{5},  \cite{7} for the description of such fractals.  

 \qua  Here, following
\cite{3} and \cite{4}, we will say that $\balpha:=(\alpha_1,...,\alpha_k)$ 
is a {\it polyratio} if $\alpha_i\in ]0,1[$ for every $i=1,...,k$. 
When  the self-similar fractal is defined by (1.2) in a metric
 space where $\psi_i$ satisfy (1.1), then, by definition
 the maps $\psi_i$ are $\witi \alpha_i$-Lipschitz.
 More generally, 
 one could ask for what  polyratios $\balpha:=(\alpha_1,...,\alpha_k)$  
  there exists a distance 
 $d$ on $K$ such that the maps $\psi_i$ are $\alpha_i$-Lipschitz
 with respect to $d$.
 This problem is  discussed in \cite{3} and in \cite{4}.
  When this happens, following
\cite{3}, \cite{4}, we will say that $\alpha$ is a {\it metric polyratio} (on $K$), and
also, we will say that $d$ is an $\balpha$-{\it self-similar distance}.  
 
 \qua In \cite{4} a pseudodistance $D_{\balpha}$ is constructed on a 
 general class of self-similar fractals for every polyratio, and
 it is proved that such  if $\balpha$ is  a metric polyratio,
 then such  a pseudodistance  is a distance, that is, 
 if an $\balpha$-self-similar distance exists, then $D_{\balpha}$
 is an $\alpha$-self-similar distance. Moreover, $D_{\balpha}$ induces 
 on $K$ the same topology as the original one.

 \qua A similar  problem
  has been  studied in  \cite{6}, in  connection
 to problems related to analysis on fractals. 
 Note that
if $\balpha$ is a metric polyratio, then  
 the diameter of the copy $\psi_{i_1}\circ\cdots\circ \psi_{i_m}(K)$
 with respect to 
 an $\balpha$-self-similar distance $d$ is not greater than  $
\alpha_{i_1}\cdots\alpha_{i_m} \diam(K)$, diam$(K)$ denoting the diameter
with respect to $d$. 
We will say that $\balpha$ is an {\it asymptotic metric polyratio} (on $K$) or shortly
{\it as.$\,$metric polyratio} if 
  there exista a distance $d$ on $K$ and positive constants
  $c_{1,\alpha}, c_{2,\alpha}$ such 
$$c_{1,\alpha} \alpha_{w} \diam_d(K)\le 
 \diam_d\big(\psi_{i_1}\circ\cdots\circ \psi_{i_m}(K)
 \big)\le c_{2,\alpha} \alpha_{w} \diam_d(K) \eqno (1.3)$$ 
 
 where $\alpha_{w}=\alpha_{i_1}\cdots \alpha_{i_m}$, for every $i_1,...,i_m=1,...,k$.
 We say in such a case that the  distance $d$ is an $\alpha$-{\it scaling distance}.
 
 \qua The analysis on fractals
 deals  with notions like Laplace operator, Dirichlet integral,
 Heat equation.  
  The problem of defining
 such notions
  is nontrivial since we cannot define the derivative in the usual
 sense on fractals, as they have usually an empty interior.
 Analysis on fractals
 has been developed   mainly on the {\it finitely ramified fractals},
 and more specially on the P.C.F.  self-similar sets. 
 The  Gasket, 
 the Vicsek Set and  the   Snowflake are examples of P.C.F.  self-similar sets,
 while the Carpet is not a P.C.F.  self-similar sets and neither is finitely ramified.
  However, also on certain  infinitely ramified fractals, e.g., the Carpet,
  analysis has been developed.
 Standard text-books on analysis on fractals are \cite{5} and  \cite{7},
 where also the precise definition of P.C.F.  self-similar sets is given. 

\qua  In \cite{6}, the problem of what polyratios are 
 as. metric polyratios is discussed as related to
 problems concerning Dirichlet forms and Heat Kernel.
 There,  a rather general class of self-similar fractals
 (not only finitely ramified)
 is discussed, and some conditions are given.
 In particular, it is proved that in any case
  many polyratios are as. metric.
 However,  the conditions given in \cite{6}
 are not necessary and sufficient.

 \qua In this paper, I restrict the class of fractals (more or less I consider
 the connected P.C.F. self-similar sets). See Section 2 for the details.
 For such class of fractals, I 
 prove that the notions of metric polyratio 
 and as. metric polyratio are in fact the same. Moreover, I
 give a necessary and sufficient condition
 for a polyratio being metric (or as. metric).
 This condition is based on a finite set of
 special  matrices  which are described
  in Section 4. 
  These matrices are related to the paths
  on the fractal. The condition is that this set of matrices
  satisfies a special property, which is strictly related to the
  notion of {\it joint spectral radius}, or better of {\it joint spectral subradius}.
  Joint spectral radius and joint spectral subradius
  are notions that  generalize the notion of spectral radius
  to the case of a finite set of matrices.
  More precisely, given a finite set $E$ of $n\times n$ matrices,
  the spectral radius (resp. spectral subradius) is defined as
  
  $$\lim\limits_{h\to +\infty}\max
  \big\{ ||A_{i_1}\cdots A_{i_h}||: A_{i_1},...,A_{i_h}\in E\big\},$$
   $$\big(\text{\rm resp.}\ 
  \lim\limits_{h\to +\infty}\min
  \big\{ ||A_{i_1}\cdots A_{i_h}||: A_{i_1},...,A_{i_h}\in E\big\}\big).$$
  
  A text-book on joint  spectral radius and joint  spectral subradius
  is \cite{2}.

  \qua Section 5 is devoted to prove the condition given here (Theorem 5.7).
    The condition is strictly related to the statement
  that such spectral objects are greater than or equal to $1$.
  However, it is not equivalent.  Namely, we require that a set $E$ of
  matrices with nonnegative entries satisfies
  
  $$||A_{i_1}\cdots A_{i_h}(e_j)||\ge c$$
where $c$ is a positive constant independent of $h$, of
the matrices $A_{i_1},...$, $A_{i_h}\in E$ and of the vector
$e_j$ of the canonical basis.  
  Note that usually, at least to my knowledge,
  it is difficult to evaluate  joint spectral radius and  joint spectral subradius
   of a  finite set of matrices. So, I expect that in the general
   case an explicit and effective condition for 
   a polyratio being metric  on the given fractal
   could be hard to find.
   However, this can be done for some specific fractals.
   In Section 6,  I give explicit necessary and sufficient conditions
   on the Gasket and on the Vicsek set. I expect
   that similar explicit conditions can be given for fractals having a simple
   structure, and for   more complicated fractals, if the
   factors $\alpha_i$ have some good symmetry properties.

\section{Fractals}

In this Section, we describe the construction of a fractal, following
more or less the approach of \cite{5}. The results of this section are
standard (either known or simple consequences
of known results), but I prefer to recall them.
 Let $k$ be an integer greater than $1$. Define

 $$W_m=\{1,...,k\}^m,\quad \witi W_m:=\bigcup\limits_{l\le m} W_l,
 \quad W^*:=\bigcup\limits_{m\in\bna} W_m, $$
$$ W(=W_{\infty}):= \{1,...,k\}^{\bna}.$$
 
 If $\tw\in W^*$, let $|\tw|:=m$ if $\tw\in W_m$, and we say that $w$ 
 is a word and $m$ is the length
 of $w$. We equip $W$ with the product topology $\{1,...,k\}
 ^{\bna}$, where on $\{1,...,k\}$ 
 we put the discrete distance.  Note that the unique word in $W_0$ is 
 the empty word $\empti$.
 
\qua  Let $\sigma_i:W\to W$ be defined by
 $\sigma_i(w)=iw$, where if $w=(i_1, i_2, i_3....)$, we set $iw=(i, i_1, i_2, i_3...)$.
 If $w\in W$, or $w\in W_m$ with $m\ge \bar m$,
 $w=(i_1,i_2,i_3,....)$, let $w_{(	\bar m)}\in W_{\bar m}$
  be defined by $w_{(\bar m)}=
 (i_1,...,i_{\bar m})$.  
 If $\tw\in W^*$ and $m\le |\tw|$, we say that $\tw_{(m)}$ is  a
 {\it segment} of $\tw$. We say that two words 
 $\tw$ and ${\tw}'$ are {\it incomparable}
 if neither $\tw$ is a segment of ${\tw}'$ nor 
 ${\tw}'$ is a segment of $\tw$. 
 Denote by $i^{(m)}$ the element of $W_m$ 
 of the form $(i,...,i)$ where $i$ is repeated $m$
 times, for $m\in\bna\cup\{\infty\}$.
 
 \qua  Let $(K,\bar d)$ be a compact metric space.
 We say that $K$ is a self-similar fractal is there exists
  a continuous 
  map $\pi$ from $W$ onto $K$ and  
 continuous one-to-one maps $\psi_i$, $i=1,...,k$
 from $K$ into itself such that 
 
 $$\psi_i\circ\pi=\pi\circ\sigma_i\quad\forall\, i=1,...,k.\eqno (2.1)$$
 
 If $\tw=(i_1,...,i_m)$, let $\sigma_{\tw}=\sigma_{i_1}\circ\cdots \circ\sigma_{i_m}$,
 $\psi_{\tw}=\psi_{i_1}\circ\cdots \circ\psi_{i_m}$. It follows that
 
$$\psi_{\tw}\circ\pi=\pi\circ\sigma_{\tw}\quad\forall\, \tw\in W^*.\eqno (2.2)$$ 
 
 For every $w\in W$ we have
 
 $$\{\pi(w)\}=\bigcap\limits_{m=1}^{\infty}\psi_{w_{(m)}}(K). \eqno (2.3)$$
 
 In fact, $w=\sigma_{w_{(m)}}(w')$ for some $w'\in W$, then,
 for every $m=1,2,....$ we have:
 
 $$ \pi(w)=\pi\big(\sigma_{w_{(m)}}(w')\big)=\psi_{w_{(m)}}\big(\pi(w')\big)
\in  \psi_{w_{(m)}}(K)$$
 
 and the inclusion $\subseteq$ in (2.3 is proved. On the other hand,
 if $y\in  \bigcap\limits_{m=1}^{\infty}\psi_{w_{(m)}}(K)$, then
 for every $m=1,2,...$, there exists  $w'_m\in W$ such that
 $y=\psi_{w_{(m)}}\big(\pi(w'_m)\big)=\pi\big(\sigma_{w_{(m}}(w'_m)\big)$.
 Since, by the definition of the topology on $W$, we have
 $\sigma_{w_{(m)}}(w'_m)\llim\limits_{m\to +\infty} w$, in view of the continuity
 of $\pi$, we have $y=\pi(w)$, and (2.3) is completely proved.
 
 \qua Let now 

$$E_w=\psi_w(E)\quad\forall\, w\in W^*, E\subseteq K.$$
By (2.1) we have

$$\bigcup\limits_{i=1}^k K_i=\bigcup\limits_{i=1}^k \psi_i\big(\pi(W)\big)=$$
$$=\bigcup\limits_{i=1}^k\pi\big(\sigma_i(W)\big)=
\pi\Big(\bigcup\limits_{i=1}^k\sigma_i(W)\Big)=\pi(W)=K.$$

More generally, for every $m=1,2,3....$ we have

$$K=\bigcup\limits_{w\in W_m}K_w.\eqno (2.4)$$
$$K_{i_1,...,i_{m-1}}=\bigcup\limits_{i_m=1}^k K_{i_1,...,i_{m-1},i_m}
\eqno (2.4')$$

As  a consequence, we have

$$K_{i_1,...,i_m}\supseteq K_{i_1,...,i_{m-1}} 
\quad \forall\, i_1,...,i_m=1,...,k.\eqno (2.4'')$$

For the following we will require additional properties on the fractal.
We require that the fractal is a P.C.F. self-similar set
with a little additional property similar to that required in \cite{7}.
Suppose  $\psi_i$ has a unique fixed point
which we denote by  $P_i$, and let $\witi V=\{P_i, i=1,...,k\}$.
 Assume there exists a subset $V=\vo=\{P_1,...,P_N\}$ of $\witi V$,
 of $N$ elements with $2\le N\le k$.


\qua The sets of the form $V_{i_1,...,i_m}$ will bee called {\it $m$-cells},
and the sets of the form $K_{i_1,...,i_m}$ will be called {\it $m$-copies}.
We will require that the intersection of two different
$m$-cells amounts to the intersection of the corresponding
$m$--copies. The Sierpinski Carpet is a fractal that does not fill
such a requirement.
More precisely, suppose that,  when 
$\tw,{\tw}'\in W^*$, $\tw\ne {\tw}'$, $|\tw| = |{\tw}'|$, then

$$ K_{\tw}\cap K_{{\tw}'}
=V_{\tw}\cap V_{{\tw}'},\eqno(2.7)$$
$$\#\big(K_{\tw}\cap K_{{\tw}'}\big)\le 1.\eqno(2.7')$$

Requirement (2.7) is more or less the finite ramification property. 
Requirement (2.7$'$) is possibly not strictly necessary, but simplifies many
arguments and is satisfied by almost all the finitely ramified
fractals considered in the literature.
Moreover, we require

$$\text{If}\ i=1,...,k, \ j,h=1,....,N\ \text{and}\ \psi_i(P_j)=P_h\ \text{then}\ i=j=h.
\eqno (2.8)$$

 Note that (2.8) in particular implies that $P_j\ne P_{j'}$ if $j\ne j'$. Let 
 
$$ \widehat J:=\big\{(j_1,j_2):j_1,j_2=1,...,N, j_1\ne j_2\big\},$$
 
  and note
 that $\#(\widehat J)=\witi M:=N(N-1)$. Let
 
 $$\vn m=\bigcup\limits_{i_1,...i_m=1}^k V_{i_1,...i_m}, 
 \quad \vn{\infty}=\bigcup\limits_{m=0}^{\infty} \vn m.$$
 
 Note that $\vn m\subseteq \vn{m+1}$ for every positive integer $m$.
 I now prove some lemmas useful in the sequel.

\begin{lem}\label{2.1}
For every $\tw\in W^*$, the map $\psi_{\tw}$ sends $K\setminus \vo$ into itself.
More precisely, if $\tw\in W_m$, $m>0$, and $\psi_{\tw}(Q)=P_j\in\vo$, $Q\in K$, then
$Q=P_j$  and $\tw=j^{(m)}$.
\end{lem}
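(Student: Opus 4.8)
The plan is to reduce the whole statement to the single-map case $m=1$ and then pass to general $m$ by induction on the word length. Granting the one-step claim \emph{if $\psi_i(Q)=P_j\in\vo$ with $Q\in K$, then $i=j$ and $Q=P_j$}, the ``more precise'' assertion follows at once: writing $\tw=(i_1,\dots,i_m)$ and ${\tw}'=(i_2,\dots,i_m)\in W_{m-1}$, from $\psi_{\tw}=\psi_{i_1}\circ\psi_{{\tw}'}$ the hypothesis $\psi_{\tw}(Q)=P_j$ reads $\psi_{i_1}\big(\psi_{{\tw}'}(Q)\big)=P_j$, so the one-step claim gives $i_1=j$ and $\psi_{{\tw}'}(Q)=P_j$; the inductive hypothesis applied to ${\tw}'$ then yields $Q=P_j$ and ${\tw}'=j^{(m-1)}$, whence $\tw=j^{(m)}$. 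The first assertion is then immediate: for the empty word $\psi_{\empti}$ is the identity, while for $m>0$ the precise statement says that $\psi_{\tw}(Q)\in\vo$ can occur only when $Q=P_j\in\vo$; hence $\psi_{\tw}$ carries $K\setminus\vo$ into itself.

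Thus the real content is the one-step claim, and I would first isolate the geometric fact that $P_j$ lies in exactly one $1$-copy, namely $K_j$. That $P_j\in K_j$ is clear, since $P_j$ is the fixed point of $\psi_j$ and so $P_j=\psi_j(P_j)\in\psi_j(K)=K_j$. To exclude $P_j\in K_{i'}$ for $i'\ne j$, I would use the finite-ramification hypothesis (2.7): if $P_j\in K_{i'}\cap K_j$ with $i'\ne j$, these being distinct words of length $1$, then $P_j\in V_{i'}\cap V_j$; in particular $P_j\in V_{i'}=\psi_{i'}(V)$, i.e.\ $P_j=\psi_{i'}(P_l)$ for some $l\in\{1,\dots,N\}$. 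Now hypothesis (2.8) applies to $\psi_{i'}(P_l)=P_j$ and forces $i'=l=j$, contradicting $i'\ne j$. Hence $P_j\in K_{i'}$ only for $i'=j$.

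With this localization in hand the one-step claim is quick. Since $\psi_i(Q)=P_j$, we have $P_j\in\psi_i(K)=K_i$, and by the uniqueness just proved $i=j$; then $\psi_j(Q)=P_j=\psi_j(P_j)$, and since each $\psi_j$ is one-to-one we get $Q=P_j$. The main obstacle is precisely the localization step: showing that the vertex $P_j$ sits in no $1$-copy other than $K_j$. This is where both structural hypotheses enter — (2.7) to convert a copy-intersection into a cell-intersection, and (2.8) to rule out that $P_j$ is the image under some $\psi_{i'}$ (with $i'\ne j$) of a vertex of $V$. Everything else, the peeling-off induction and the use of injectivity, is routine and should present no difficulty.
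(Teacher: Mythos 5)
Your proof is correct and rests on exactly the same ingredients as the paper's: the fixed-point property $P_j=\psi_j(P_j)$ (so $P_j\in K_{j}$), hypothesis (2.7) to convert a copy-intersection into a cell-intersection, hypothesis (2.8) to pin down indices, injectivity of the $\psi_i$, and an induction on word length. The only difference is organizational rather than conceptual — the paper applies (2.7) to the pair $K_{\tw}$, $K_{j^{(m)}}$ at level $m$ to get $Q\in\vo$ and then compresses the rest into ``(2.8) and an inductive argument,'' whereas you apply (2.7) only to length-one words (your localization step) and run a clean induction on the one-step claim; your write-up is in effect a careful unfolding of the paper's terse proof.
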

\begin{proof}
Let $Q\in K$ and suppose  and $\psi_{\tw}(Q)=P_j\in\vo$. Then
$\tw=j^{(m)}$ and consequently  $Q=P_j$. In fact in the opposite case,
as $P_j=\psi_{j^{(m)}}(P_j)$,
by (2.7) we have $Q\in\vo$, and by (2.8) and an inductive
argument we have $\tw=j^{(m)}$, a contradiction.
 \end{proof}

Note that (2.7) holds under the hypothesis $|\tw|=|{\tw}'|$, and that 
(2.7) is no longer valid if $\tw$, ${\tw}'$ are two arbitrary different words.
In fact, if ${\tw}'$ is a segment of $\tw$, then $K_{\tw}\cap K_{{\tw}'}= K_{\tw}$
by (2.4$''$). However,
as we see now, this is the only case in which (2.7) does not hold.

\begin{lem}\label{2.2}
If $\tw$ and ${\tw}'$ are two incomparable words, then

\centerline{$K_{\tw}\cap K_{{\tw}'}
=V_{\tw}\cap V_{{\tw}'}$.}
\end{lem}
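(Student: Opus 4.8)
The plan is to reduce the general incomparable case to the equal-length situation already handled by (2.7), and then to promote the conclusion from a truncation of the longer word back to the longer word itself using Lemma \ref{2.1}. The inclusion $V_{\tw}\cap V_{{\tw}'}\subseteq K_{\tw}\cap K_{{\tw}'}$ is immediate, since $V_w=\psi_w(V)\subseteq\psi_w(K)=K_w$ for every word $w$; so the whole content lies in the reverse inclusion.

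Assume without loss of generality that $m:=|\tw|\le m':=|{\tw}'|$. If $m=m'$, incomparability simply means $\tw\ne{\tw}'$, and (2.7) gives the claim at once. So suppose $m<m'$, and write ${\tw}'_{(m)}$ for the length-$m$ segment of ${\tw}'$. Since $\tw$ is not a segment of ${\tw}'$, we have $\tw\ne{\tw}'_{(m)}$, and these are two distinct words of the same length $m$; hence (2.7) yields $K_{\tw}\cap K_{{\tw}'_{(m)}}=V_{\tw}\cap V_{{\tw}'_{(m)}}$. Using the nesting $K_{{\tw}'}\subseteq K_{{\tw}'_{(m)}}$ (obtained by iterating (2.4$'$)), any point $y\in K_{\tw}\cap K_{{\tw}'}$ lies in $K_{\tw}\cap K_{{\tw}'_{(m)}}$, so that $y\in V_{\tw}$ and $y=\psi_{{\tw}'_{(m)}}(P_b)$ for some $b\in\{1,\dots,N\}$.

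The crux is then to show $y\in V_{{\tw}'}$, i.e. to upgrade membership from the truncation ${\tw}'_{(m)}$ to the full word ${\tw}'$. Write ${\tw}'={\tw}'_{(m)}u$, where $u$ is the length-$(m'-m)$ suffix of ${\tw}'$, so that $\psi_{{\tw}'}=\psi_{{\tw}'_{(m)}}\circ\psi_u$. Since $y\in K_{{\tw}'}$, there is $z\in K$ with $y=\psi_{{\tw}'}(z)=\psi_{{\tw}'_{(m)}}\big(\psi_u(z)\big)$. Each $\psi_i$ is one-to-one, hence so is $\psi_{{\tw}'_{(m)}}$, and comparing with $y=\psi_{{\tw}'_{(m)}}(P_b)$ gives $\psi_u(z)=P_b\in\vo$. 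As $|u|=m'-m>0$, Lemma \ref{2.1} applies to $u$ and forces $z=P_b$ (and incidentally $u=b^{(m'-m)}$). Therefore $y=\psi_{{\tw}'}(P_b)\in V_{{\tw}'}$, and combined with $y\in V_{\tw}$ we conclude $y\in V_{\tw}\cap V_{{\tw}'}$, which finishes the reverse inclusion.

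I expect the only delicate point to be this last promotion step. The equal-length reduction via (2.7) only places $y$ in $V_{{\tw}'_{(m)}}$, and it is precisely the injectivity of $\psi_{{\tw}'_{(m)}}$ together with Lemma \ref{2.1} (which rules out any preimage in $K\setminus\vo$ mapping into $\vo$) that lets one pass from the truncation ${\tw}'_{(m)}$ to ${\tw}'$. Everything else is the bookkeeping of reducing to equal length and exploiting the nesting of the copies.
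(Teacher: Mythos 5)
Your proof is correct and follows essentially the same route as the paper's: reduce to the equal-length truncation ${\tw}'_{(m)}$ via (2.7), then use injectivity of the maps together with Lemma \ref{2.1} to force the preimage into $\vo$ and promote membership from $V_{{\tw}'_{(m)}}$ to $V_{{\tw}'}$. The only difference is that you spell out the easy inclusion and the nesting step explicitly, which the paper leaves implicit.
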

\begin{proof}
Let $\tw=(i_1,...,i_m)$, ${\tw}'=(i'_1,...,i'_{m'})$,
and, since the case $m=m'$ follows at once from (2.7), we can and do assume
 $m' >m$. As we have also assumed that
$\tw$ and ${\tw}'$ are incomparable, we have $ (i_1,...,i_m)\ne (i'_1,...,i'_m)$.
Thus, if $Q\in K_{\tw}\cap K_{{\tw}'}$, then
$$Q\in K_{i_1,...,i_m}\cap K_{i'_1,...,i'_m}
= V_{i_1,...,i_m}\cap V_{i'_1,...,i'_m}.$$
It remains to prove that $Q\in V_{i'_1,...,i'_{m'}}$.  To see this, note that 
there exist $Q'\in K$ and $P_j\in\vo$ such that 

$$Q=\psi_{i'_1,...,i'_m}\big(\psi_{i'_{m+1},...,i'_{m'}}(Q')\big)= \psi_{i'_1,...,i'_m}(P_j).$$

Hence, by Lemma 2.1, we have $Q'=P_j$ (and $i'_{m+1}=\cdots=i'_{m'}=j$).
Thus $Q\in V_{i'_1,...,i'_{m'}}$.
\end{proof}

\begin{lem}\label{2.3}
For every $m'\ge m$ we have
$$K_{i_1,...,i_{m'}}\cap \vn {m}\subseteq V_{i_1,...,i_{m}} \eqno (2.9)$$

hence $K_{i_1,...,i_{m'}}\cap \vn {m}$ has at most one element
if $m'>m$.
 \end{lem}
\begin{proof}
Let $Q\in K_{i_1,...,i_{m'}}\cap \vn {m}$ and let $i'_1,...,i'_{m}$
be such that $Q\in  V_{i'_1,...,i'_{m}}$. Then, if
$(i_1,...,i_m)= (i'_1,...,i'_{m})$ we have $Q\in V_{i_1,...,i_{m}}$.
If, on the contrary,  $(i_1,...,i_m)\ne (i'_1,...,i'_{m})$, we have

$$Q\in K_{i_1,...,i_{m}}\cap K_{i'_1,...,i'_{m}}
\subseteq V_{i_1,...,i_{m}}, $$

and (2.9) is proved. Since every point in $K_{i_1,...,i_{m'}}\cap \vn {m}$
has the form $\psi_{i_1,..,i_{m'}}(Q)=\psi_{i_1,...,i_{m}}(P_{j})$ 
with $P_{j}\in\vo$, $Q\in K$,  we  have 
$\psi_{i_{m+1},...,i_{m'}}(Q)$ $=P_{j}$
thus by Lemma 2.1, $j=i_{m+1}=\cdots= i_{m'}$, $Q=P_j=P_{i_{m+1}}$.
 \end{proof}

 \begin{cor}\label{2.4}
 The set $V_w\cap V_{w'}$ has at most one point, whenever $w,w'\in W^*$,
 $w\ne w'$. 
 \end{cor}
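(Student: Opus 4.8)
The plan is to reduce everything to a comparison of lengths and then invoke the two facts already available: the equal-length bound (2.7$'$) and Lemma 2.3. Since the statement is symmetric in $w$ and $w'$, I may assume $m:=|w|\le m':=|w'|$. The starting point is the pair of elementary inclusions $V_w\subseteq \vn m$ and $V_{w'}\subseteq K_{w'}$: the first holds because $V_w$ is one of the $m$-cells whose union defines $\vn m$, and the second because $V=\vo\subseteq K$, so applying $\psi_{w'}$ to both sides gives $V_{w'}=\psi_{w'}(V)\subseteq\psi_{w'}(K)=K_{w'}$.

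I would first dispose of the case $m=m'$. Here $w\ne w'$ forces $w$ and $w'$ to be incomparable, since two distinct words of the same length cannot be segments of one another; thus (2.7$'$) applies and yields $\#(K_w\cap K_{w'})\le 1$. As $V_w\cap V_{w'}\subseteq K_w\cap K_{w'}$, the claim follows immediately in this case.

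The remaining case is $m<m'$. Combining the two inclusions recorded above gives
$$V_w\cap V_{w'}\subseteq \vn m\cap K_{w'}.$$
Now Lemma 2.3, read with $w'=(i_1,\dots,i_{m'})$ and the strict inequality $m'>m$, asserts precisely that $K_{w'}\cap \vn m$ has at most one element. Hence $V_w\cap V_{w'}$ has at most one point, and the corollary is proved.

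I do not anticipate a genuine obstacle here; the only point demanding care is that (2.7$'$) is stated solely for words of equal length, so the unequal-length situation must be routed through Lemma 2.3 rather than through (2.7$'$) or Lemma 2.2. In particular, the case in which $w$ is a segment of $w'$ requires no separate argument: the same inclusion $V_w\cap V_{w'}\subseteq \vn m\cap K_{w'}$ and the same application of Lemma 2.3 already cover it, which is why the comparable and incomparable subcases of $m<m'$ can be treated uniformly.
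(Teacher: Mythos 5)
Your proof is correct and takes essentially the same approach as the paper: the equal-length case is settled by (2.7$'$) after including $V_w\cap V_{w'}$ into $K_w\cap K_{w'}$, and the case $m<m'$ by the inclusion $V_w\cap V_{w'}\subseteq \vn{m}\cap K_{w'}$ together with Lemma 2.3. The only superfluous step is your appeal to incomparability in the equal-length case, since (2.7$'$) is stated for arbitrary distinct words of equal length and needs no such hypothesis.
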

\begin{proof}
Let $m=|w|$, $m'=|w'|$.
If $m=m'$ this follows from (2.7$'$). If for example $m'> m$,
let $w=(i_1,...,i_m)$, $w'=(i'_1,...,i'_{m'})$. Then
$V_w\cap V_{w'}\subseteq V_{i'_1,...,i'_{m'}}\cap \vn{m}$
and we conclude by Lemma 2.3.
\end{proof}

 \begin{cor}\label{2.5}
 We have $\psi_i(\vn{m+1}\setminus \vn{m})\subseteq
\vn{m+2}\setminus \vn{m+1}$ for every $m\in\bna$ and every $i=1,...,k$.
\end{cor}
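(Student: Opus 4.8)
The inclusion splits into a trivial containment and a strictness statement, and essentially all the content is in the latter. First I would record the easy half, $\psi_i(\vn{m+1})\subseteq\vn{m+2}$. This is immediate from $\psi_i(V_w)=V_{iw}$: when $|w|=m+1$ the word $iw$ has length $m+2$, so $\psi_i$ carries each $(m+1)$-cell onto an $(m+2)$-cell. Consequently, for $x\in\vn{m+1}\setminus\vn{m}$ we automatically get $\psi_i(x)\in\vn{m+2}$, and the whole point of the corollary is the \emph{strictness}, namely that $\psi_i(x)\notin\vn{m+1}$.

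For the strictness I would argue by contradiction. Since $x\in\vn{m+1}$, write $x=\psi_{w}(P_s)$ with $w=(l_1,\dots,l_{m+1})$ and $P_s\in\vo$, so that $\psi_i(x)=\psi_{iw}(P_s)\in K_{iw}$ with $|iw|=m+2$. Assuming also $\psi_i(x)\in\vn{m+1}$, I would apply Lemma 2.3 to the word $iw$, using the Lemma with fine length $m'=m+2$ and coarse level $m+1$; this localizes the point in the corresponding coarse cell, $\psi_i(x)\in V_{(iw)_{(m+1)}}=V_{i,l_1,\dots,l_m}$. This localization is the crux of the argument: without it I would only know that $\psi_i(x)$ lies in \emph{some} $(m+1)$-cell, with no way to compare it to the cell produced by $x$ itself.

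Once $\psi_i(x)=\psi_{i,l_1,\dots,l_m}(P_t)$ for some $P_t\in\vo$, I would exploit the one-to-oneness of $\psi_i$ and of each $\psi_{l_r}$ (hence of the composite $\psi_{i,l_1,\dots,l_m}$) to cancel the common prefix in $\psi_{i,l_1,\dots,l_m}(P_t)=\psi_{i,l_1,\dots,l_{m+1}}(P_s)=\psi_{i,l_1,\dots,l_m}\big(\psi_{l_{m+1}}(P_s)\big)$, which yields $\psi_{l_{m+1}}(P_s)=P_t$. Hypothesis (2.8) then forces $l_{m+1}=s=t$, so $\psi_{l_{m+1}}(P_s)=\psi_s(P_s)=P_s$ is a fixed point. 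Substituting back gives $x=\psi_{l_1,\dots,l_m}(P_s)\in V_{l_1,\dots,l_m}\subseteq\vn{m}$, contradicting $x\notin\vn{m}$. Hence $\psi_i(x)\notin\vn{m+1}$, as claimed.

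I expect the only delicate point to be the invocation of Lemma 2.3 that pins $\psi_i(x)$ down to the correct coarse cell; everything afterward is a clean cancellation combining injectivity of the $\psi$'s with the rigidity (2.8) of the fixed points, which collapses $x$ one level down and produces the contradiction. The degenerate case $m=0$ needs no separate treatment: there the prefix $\psi_{i,l_1,\dots,l_m}$ reduces to $\psi_i$ alone and the same computation goes through verbatim.
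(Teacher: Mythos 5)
Your proof is correct and follows essentially the same route as the paper: localize $\psi_i(x)$ inside $V_{i,l_1,\dots,l_m}$ via Lemma 2.3, then cancel the injective map to force $x\in\vn{m}$, a contradiction. The only difference is cosmetic: the paper cancels just $\psi_i$, obtaining $x=\psi_{l_1,\dots,l_m}(P_t)\in\vn{m}$ immediately, so your extra cancellation of the full prefix and the appeal to (2.8) are unnecessary (though harmless).
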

\begin{proof}
Let $Q\in \vn{m+1}\setminus \vn{m}$, namely
$Q=\psi_{i_1,...,i_{m+1}}(P_j)$. Then, clearly, $\psi_i(Q)\in \vn{m+2}$.
Also, if $\psi_i(Q)\in \vn{m+1}$, then
$\psi_i(Q)\in K_{i,i_1,...,i_m}\cap\vn {m+1}\subseteq V_{i,i_1,...,i_m}$, by 
Lemma 2.3, and $\psi_i(Q)=\psi_i\big(\psi_{i_1,...,i_m}(P_{j'})\big)$
for some $j'=1,...,N$,
hence $Q=\psi_{i_1,...,i_m}(P_{j'})\in \vn m$, a contradiction.
\end{proof}

In the sequel we will often use with no mention
the following  simple consequence of (2.7).

\begin{lem}\label{2.6}

i) $\vn{\infty}\cap K_{\tw}=\vn{\infty}_{\tw} \quad\forall\, \tw\in W^*. $

ii) $\vn{\infty}_{\tw}$ is dense in $K_{\tw}$ for every
$\tw\in W^*. $

\end{lem}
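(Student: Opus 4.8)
The plan is to prove the two inclusions of part i) separately and then deduce ii) from i) together with a density statement for $\vn{\infty}$ in $K$. For the inclusion $\supseteq$ in i), I would take $x\in\vn{\infty}_{\tw}=\psi_{\tw}(\vn{\infty})$, write $x=\psi_{\tw}(y)$ with $y\in V_{w'}$ for some $w'$, so that $x\in\psi_{\tw}(K)=K_{\tw}$ and at the same time $x=\psi_{\tw}\psi_{w'}(P_j)=\psi_{\tw w'}(P_j)\in V_{\tw w'}\subseteq\vn{\infty}$, where $\tw w'$ denotes the concatenation. This already gives $x\in\vn{\infty}\cap K_{\tw}$, and the direction is essentially immediate.

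The substantial direction is $\subseteq$. Given $Q\in\vn{\infty}\cap K_{\tw}$ with $\tw\in W_m$, since each $\psi_i$ is one-to-one the map $\psi_{\tw}$ is injective, so there is a unique $Q'\in K$ with $Q=\psi_{\tw}(Q')$; the goal is to show $Q'\in\vn{\infty}$. I would fix $m'$ with $Q\in\vn{m'}$ and split into cases according to the comparison of $m'$ and $m$. If $m'\le m$, then $Q\in K_{\tw}\cap\vn{m'}\subseteq V_{\tw_{(m')}}$ by Lemma 2.3, so $Q=\psi_{\tw_{(m')}}(P_j)$; writing $\tw$ as $\tw_{(m')}$ followed by a tail and cancelling $\psi_{\tw_{(m')}}$ by injectivity, Lemma 2.1 forces $Q'=P_j\in\vo\subseteq\vn{\infty}$. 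If $m'>m$, write $Q=\psi_{w''}(P_j)$ with $w''\in W_{m'}$ and compare $\tw$ with the segment $w''_{(m)}$: when $w''_{(m)}\ne\tw$ the two length-$m$ words are incomparable, so by (2.7) $Q\in V_{\tw}$ and $Q'\in V\subseteq\vn{\infty}$; when $w''_{(m)}=\tw$, the word $\tw$ is a segment of $w''$ and factoring $\psi_{w''}=\psi_{\tw}\psi_v$ gives $Q'=\psi_v(P_j)\in V_v\subseteq\vn{\infty}$. This case analysis, and in particular keeping track of which word plays the role of the fine index when invoking Lemmas 2.1 and 2.3, is the part I expect to require the most care.

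For part ii), I would first reduce to the case $\tw=\empti$, i.e.\ to the density of $\vn{\infty}$ in $K$: since $\psi_{\tw}$ is continuous and $K_{\tw}=\psi_{\tw}(K)$ is compact hence closed, the relation $K_{\tw}=\psi_{\tw}\big(\overline{\vn{\infty}}\big)\subseteq\overline{\psi_{\tw}(\vn{\infty})}=\overline{\vn{\infty}_{\tw}}\subseteq K_{\tw}$ shows $\vn{\infty}_{\tw}$ is dense in $K_{\tw}$ once $\vn{\infty}$ is dense in $K$. To prove the latter, I would take any $\pi(w)\in K$ and use (2.3): the sets $K_{w_{(m)}}$ form a decreasing sequence of nonempty compact sets with $\bigcap_m K_{w_{(m)}}=\{\pi(w)\}$, so for every open neighborhood $U$ of $\pi(w)$ the nested compact sets $K_{w_{(m)}}\setminus U$ have empty intersection and one of them is empty, giving $K_{w_{(m)}}\subseteq U$ for large $m$. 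Since $\emptyset\ne V_{w_{(m)}}\subseteq K_{w_{(m)}}\cap\vn{\infty}$, the neighborhood $U$ meets $\vn{\infty}$, and as $\pi(w)$ was arbitrary this yields density. Combining this with the reduction above completes ii).
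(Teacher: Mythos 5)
Your proof is correct, but it is worth separating the two parts. For part i) you are doing, in expanded form, what the paper does in one line: the paper simply asserts that any $Q\in \vn{\infty}\cap K_{\tw}$ lies in $\vn{\infty}_{{\tw}'}$ for some word ${\tw}'$ of the same length as $\tw$, and then applies (2.7) once; your case analysis on $m'$ versus $m$ — Lemma 2.3 together with Lemma 2.1 when $m'\le m$, and (2.7) plus factoring of $\psi_{w''}$ when $m'>m$ — is precisely the justification hiding behind that assertion, so this half is essentially the paper's argument with the details filled in. Part ii) is where you genuinely diverge. The paper argues through the coding map: writing $Q=\pi\big(\sigma_{\tw}(w')\big)$, it exhibits the explicit approximants $\psi_{\tw}\circ\psi_{w'_{(m)}}(P_1)\in\vn{\infty}_{\tw}$, which converge to $Q$ because $\sigma_{\tw}\circ\sigma_{w'_{(m)}}(1^{(\infty)})\to\sigma_{\tw}(w')$ in $W$ and $\pi$ is continuous, the identification $\pi(1^{(\infty)})=P_1$ coming from (2.3). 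You instead first reduce to density of $\vn{\infty}$ in $K$ via $\psi_{\tw}(\overline{A})\subseteq\overline{\psi_{\tw}(A)}$ and closedness of the compact $K_{\tw}$, and then run a finite-intersection-property argument on the decreasing compacts $K_{w_{(m)}}$, whose intersection is the singleton $\{\pi(w)\}$ by (2.3), each containing the nonempty set $V_{w_{(m)}}\subseteq\vn{\infty}$. Both routes ultimately rest on (2.3). The paper's is more constructive: it names the approximating sequence and needs only sequential convergence in the symbol space. Yours is softer and purely topological; it avoids computing $\pi(1^{(\infty)})$ altogether, and as a by-product isolates the reusable fact that the copies $K_{w_{(m)}}$ shrink into any neighborhood of $\pi(w)$, a pointwise form of the statement that nested copies collapse to points. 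The only microscopic point to watch in your part i) is the subcase $m'=m$, where the tail word is empty and Lemma 2.1 is not applicable as stated; there injectivity of $\psi_{\tw}$ alone gives $Q'=P_j$, so nothing is lost.
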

\begin{proof}
i) To prove the $\subseteq$ part, note that if $Q\in \vn{\infty}\cap K_{\tw}$
then $Q\in \vn{\infty}_{{\tw}'}$ for some ${\tw}'\in W^*$ with 
$|{\tw}'|=|\tw|$, thus, if
${\tw}'\ne \tw$ by (2.7) we have $Q\in V_{\tw}\subseteq
\vn{\infty}_{\tw}$. The $\supseteq$ part is trivial.

\qua ii) Let $Q\in K_{\tw}$. By (2.2), $Q=\pi\big(\sigma_{\tw}(w')\big)$ for some
$w'\in W$. Next, note that

$$\sigma_{\tw}\circ\sigma_{w'_{(m)} }(1^{(\infty)})\llim\limits_{m\to +\infty}
\sigma_{\tw}(w')\eqno (2.10)$$

In fact, putting $\bar m=|\tw|$, we have 
$$\big(\sigma_{\tw}\circ\sigma_{w'_{(m)} }
(1^{(\infty)})\big)_{(\bar m+m)}=\big(\sigma_{\tw}(w')\big)_{(\bar m+m)},$$
hence (2.10) follows from the definition of the topology
on $W$. It follows from (2.10) that
$\disp{\pi\big(\sigma_{\tw}\circ\sigma_{w'_{(m)} }(1^{(\infty)})\big)
\llim\limits_{m\to +\infty}\pi\big(\sigma_{\tw}(w')\big)=Q.}$	
On the other hand, 
$$\pi\big(\sigma_{\tw}\circ\sigma_{w'_{(m)} }(1^{(\infty)})\big)=
\psi_{\tw}\circ\psi_{w'_{(m)} }\big(\pi(1^{(\infty)})\big),\eqno (2.11)$$
and since $\psi_1(P_1)=P_1$, we have $\psi_{1^{(m)}}(P_1)=P_1$, hence by
(2.3), $\pi(1^{(\infty)})=P_1$, thus, in view of (2.11), 
$\pi\big(\sigma_{\tw}\circ\sigma_{w'_{(m)} }(1^{(\infty)})\big)\in 
\vn{\infty}_{\tw}$.
\end{proof}

Finally, we require that the fractal is {\it connected}.
By this we mean that for every $Q,Q'\in\von$ there exist
$Q_0,...,Q_n\in\von$ such that $Q_0=Q$, $Q_n=Q'$, and
for every $h=1,...,n$ there exists $i(h)=1,...,k$ such that
$Q_{h-1},Q_h\in V_{i(h)}$. Note that for example
the Cantor set is not connected.
{\it From now on, all fractals are meant to have all the properties required in this Section.} 

\qua 
I now introduce the problems discussed in this paper.
Following Introduction, we say  that
$\balpha:=(\alpha_1,...,\alpha_k)$ 
is a {\it polyratio} if $\alpha_i\in ]0,1[$ for every $i=1,...,k$. 
Here we put 
$\alpha_{\tw}:=\alpha_{i_1}\cdots \alpha_{i_m}$, $\alpha_{\empti}=1$.
Put $\bar\alpha_{min}=\min\{\alpha_i\}$, $\bar\alpha_{max}=\max\{\alpha_i\}$.
Given a polyratio $\balpha$ as above we say
 that a distance $d$ on $K$ is 
  $\balpha$-{\it self-similar} if
  the maps $\psi_i$, $i=1,...,k$,  are $\alpha_i$-Lipschitz.
  If an $\balpha$-self-similar distance exists on $K$ we say that
   $\alpha$ is a {\it metric polyratio} (on $K$).  
   We  say that a distance $d$ is  an \asd\ if
  there exist  positive constants
  $c_{1,\alpha}, c_{2,\alpha}$ such that for every 
  $i_1,...,i_m=1,...,k$ (1.3) holds. 
  We say that $\balpha$ 
   is an {\it asymptotic metric polyratio} (on $K$) or shortly
{\it as.$\,$metric polyratio} if 
  there exists an \asd\ on $K$.
The problems discussed in this paper are what polyratios are metric and
what polyratios are as. metric. We will treat such problems in Section 5.
Sections 3 and 4 are devoted to introduce preparatory notions.

 \section{Graphs on the fractal}
 
 In this Section we first define a suitable graph on $\vn{\infty}$,
 and then, based on it, the notion of a path on $\vn{\infty}$.
 We define a graph on $\vn{\infty}$ putting $Q\sim Q'$ 
  for $Q,Q'\in \vn{\infty}$ if $Q\ne Q'$ and there exist
 $P,P'\in\vo$ and $\tw\in W^*$ such that $Q=\psi_{\tw}(P)$, $Q'=\psi_{\tw}(P')$.
 Put also $Q\simeq Q'$ if either $Q\sim Q'$ or $Q=Q'$.
 Given $\iota=(j_1,j_2)\in \widehat J$,  let 
 
 $$P_{\iota}:= (P_{j_1},P_{j_2}),
\quad  \psi_{\tw}(P_{\iota})=\big(\psi_{\tw}(P_{j_1}),   \psi_{\tw}(P_{j_2}) \big).$$

Let $\witi Y$ be the set of $(Q,Q')\in\vn{\infty}\times\vn{\infty}$ such that
$Q\sim Q'$.
The following function  will be useful in the sequel:
 Let $\alw:\witi Y \to\bre$ be defined as
 
 $$\alw\big(\psi_{\tw}(P),\psi_{\tw}(P')\big)=\alpha_{\tw}
 \ \ \forall P,P'\in\vo \ \forall\, \tw\in W^*.$$
 Note that, in view of (2.7$'$), such a definition is correct, 
 i.e.,  the pair $(Q,Q')$ with $Q,Q'\in \vn{\infty}$, $Q\ne Q'$ can be represented 
uniquely  as $(\psi_w(P),\psi_w(P'))$.
 The following lemma is a consequence of the assumption on the fractal
 (in particular of (2.7)).

\begin{lem}\label{3.1}
If $Q,Q'\in\vn{\infty}$, $Q\sim Q'$, and $Q\in K_{i_1,...,i_m}$, $Q'\notin 
K_{i_1,...,i_m}$, then $Q\in V_{i_1,...,i_m}$.
\end{lem}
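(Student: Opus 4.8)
The plan is to reduce the statement to a comparison between the word $\tw\in W^*$ that witnesses $Q\sim Q'$ and the word $w:=(i_1,\dots,i_m)$. By the definition of $\sim$ I may write $Q=\psi_{\tw}(P)$ and $Q'=\psi_{\tw}(P')$ with $P,P'\in\vo$, and note that then $Q,Q'\in V_{\tw}\subseteq K_{\tw}$. Since any two words are either comparable (one a segment of the other) or incomparable, there are exactly four mutually exclusive cases to treat: $\tw=w$; $\tw$ a segment of $w$ with $|\tw|<|w|$; $w$ a segment of $\tw$ with $|w|<|\tw|$; and $\tw,w$ incomparable. In each the goal is to show $Q\in V_w$.

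The case in which $w$ is a proper segment of $\tw$ is the one place the hypothesis $Q'\notin K_w$ is used, and I would rule it out entirely: the nesting of copies coming from (2.4$'$) gives $K_{\tw}\subseteq K_w$, whence $Q'\in K_{\tw}\subseteq K_w$, contradicting $Q'\notin K_w$. The case $\tw=w$ is immediate, since then $Q=\psi_w(P)$ with $P\in\vo$, so $Q\in\psi_w(\vo)=V_w$. The incomparable case follows from Lemma 2.2: it gives $K_{\tw}\cap K_w=V_{\tw}\cap V_w$, and as $Q\in K_{\tw}\cap K_w$ we conclude $Q\in V_{\tw}\cap V_w\subseteq V_w$.

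There remains the case $\tw$ a proper segment of $w$, which is the crux and the only place Lemma 2.1 is needed. Writing $w=\tw u$ with $u:=(i_{|\tw|+1},\dots,i_m)$ a nonempty word, one has $K_w=\psi_{\tw}(K_u)$, so from $Q=\psi_{\tw}(P)\in K_w$ and the injectivity of $\psi_{\tw}$ I get $P\in K_u$. Since $P=P_j\in\vo$ lies in $K_u=\psi_u(K)$, Lemma 2.1 (applied to the nonempty word $u$) forces $u=j^{(|u|)}$ and $P_j=\psi_u(P_j)\in\psi_u(\vo)=V_u$. Applying $\psi_{\tw}$ gives $Q=\psi_{\tw}(P_j)\in\psi_{\tw}(V_u)=V_w$, using the factorization $w=\tw u$, as desired.

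I expect this last case to be the main obstacle, since it is the only one relying on the fixed-point rigidity encoded in Lemma 2.1, whereas the other three are routine bookkeeping with the containment (2.4$'$) and with Lemma 2.2. The subtleties to verify carefully are that $\psi_{\tw}$ is injective, as a composition of the one-to-one maps $\psi_i$, so that $P\in K_u$ genuinely follows from $\psi_{\tw}(P)\in\psi_{\tw}(K_u)$, and that $\psi_{\tw}(V_u)=V_w$ under $w=\tw u$.
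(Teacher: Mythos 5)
Your proof is correct and takes essentially the same approach as the paper's: a case analysis comparing the word $\tw$ witnessing $Q\sim Q'$ with $w=(i_1,\dots,i_m)$, where the crux case ($\tw$ a proper segment of $w$) is handled by exactly the paper's argument, namely injectivity of $\psi_{\tw}$ plus Lemma 2.1, and the cases where $w$ is a segment of $\tw$ are ruled out by the hypothesis $Q'\notin K_{i_1,\dots,i_m}$ just as in the paper. The only organizational difference is that you discharge the incomparable case by citing Lemma 2.2, whereas the paper re-runs the underlying (2.4)/(2.7) argument inline; the two are equivalent.
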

\begin{proof}
We have $Q, Q'\in
V_{i'_1,...,i'_l}$ for some $i'_1,...,i'_l$. Let $h=\min\{l,   m\}$.
If 

$$(i'_1,...,i'_h)=(i_1,...,i_h),$$

  then

$$Q'\in K_{i_1,...,i_h}\setminus K_{i_1,...,i_{m}},$$

 thus $m>h=l$ and 

$$Q\in
V_{i_1,...,i_l}\cap K_{i_1,...,i_{m}}\subseteq V_{i_1,...,i_{m}},$$

where the inclusion holds 
since,  if 

$$Q=\psi_{i_1,...,i_l}\circ \psi_{i_{l+1},...,i_{m}}(\witi Q)=\psi_{i_1,...,i_l}(P)$$
with $\witi Q\in K$, $P\in \vo$, we have 
$\psi_{i_{l+1},...,i_{m}}(\witi Q)=P$ thus by Lemma 2.1, $\witi Q\in\vo$.

\qua If instead $(i'_1,...,i'_h)\ne (i_1,...,i_h)$, then
either $l\ge m$ and  $Q\in K_{i_1,...,i_{m}}\cap
K_{i'_1,...,i'_{m}}$  or
$l<m$ and $Q\in K_{i'_1,...,i'_{l}}$, thus, by (2.4),
$Q\in K_{i'_1,...,i'_{m}}$ for some $i'_{l+1},...,i'_{m}$. In both cases 
 $(i_1,...,i_{m})\ne (i'_1,...,i'_{m})$, thus in view of (2.7),
 $Q\in V_{i_1,...,i_{m}}$. \end{proof}

  A $\vn{\infty}$-path (or simply path) $\Pi$ is a  sequence of the form 
  
  $$(Q_{0},...,Q_n)=
 (Q_{0,\Pi},...,Q_{n(\Pi),\Pi})$$
 
   such that
 $Q_h\in \vn{\infty}$ for every $h=0,...,n$
 and, moreover, $Q_{h-1}\sim Q_h$ for every $h=1,...,n$. Thus, there exist
 $\witi w(h,\Pi)\in W^*$, $\witi \iota(h,\Pi)\in \widehat J $ such that 
  $$(Q_{h-1,\Pi},Q_{h,\Pi})=\psi_{\witi w(h,\Pi)}(P_{\witi \iota(h,\Pi)}).$$
 More generally we say that a {\it weak path} is a sequence
 $(Q_0,...,Q_n)$ such that $Q_h\in \vn{\infty}$ for every $h=0,...,n$
 and, moreover, $Q_{h-1}\simeq Q_h$ for every $h=1,...,n$. 
 
 \qua Here, we say that $Q_{h,\Pi}$, $h=0,...,n(\Pi)$, are the {\it vertices} of $\Pi$,
and that the pairs $(Q_{h-1,\Pi},Q_{h,\Pi})$, $h=1,...,n(\Pi)$, are the {\it edges} of $\Pi$.
We say that $n(\Pi)+1$ is the {\it length} of $\Pi$.

\qua An  $E$-path $\Pi$ is a path whose elements belong to $E$
 whenever $E\subseteq \vn{\infty}$. In particular we will
 use the terms $\von$-paths, $\vo$-paths.  
 So, $\Pi$ is a $\von$-path if  for every $h=1,...,n(\Pi)$, 
 $Q_{h,\Pi}\ne Q_{h-1,\Pi}$
 and either 
$Q_{h,\Pi}, Q_{h-1,\Pi}$ lie in a common $1$-cell, or both lie in $\vo$
(see Lemma 2.3). 
 
 \qua A path $\Pi$ is {\it strong} if $\witi w(h,\Pi)\ne\empti$ for every $h=1,...,n(\Pi)$,
 in other words if two consecutive vertices of the path are not both
 in $\vo$.
 We say  that $\Pi$ is  a $\iota$-path if 
 $P_{\iota}=(Q_{0,\Pi},Q_{n(\Pi),\Pi})$.
If $E\subseteq \vn{\infty}$, we say that $\Pi$ is  a $(\iota,E)$-path if it is both 
a $E$-path and a $\iota$-path.
 We say that a path is a strong $\iota$-path if it is 
 both a strong  path and $\iota$-path.
 We say that $\Pi$ connects 
 $Q$ to $Q'$ if $Q_{0,\Pi}=Q$ and $Q_{n(\Pi),\Pi}=Q'$.
 We say that $\Pi$ is a strict path if $Q_{h,\Pi}\ne Q_{h',\Pi}$ when $h\ne h'$.

 \qua When $\Pi$ is a path and $\tw\in W^*$, we define the path
   $\psi_{\tw}(\Pi)=\big(\psi_{\tw}(Q_{0,\Pi}),...,\psi_{\tw}(Q_{n(\Pi),\Pi})\big)$.
 If $\Pi=(Q_0,...,Q_n)$
 and $\Pi'=(Q'_0,...,Q'_n)$ with $Q_n=Q'_0$
 we put 
 
 $$\Pi\circ\Pi'=(Q_0,...,Q_n,Q'_1,...,Q'_n).$$
 
 A {\it subpath} of a path 
 $\Pi$ is a path of the form
 $$\Pi':=(Q_{n_1,\Pi},Q_{n_1+1,\Pi},...,Q_{n_2-1,\Pi},Q_{n_2,\Pi})$$ 
 
 with $0\le n_1\le n_2\le n(\Pi)$, and put $n_1=:n_1(\Pi,\Pi')$,  $n_2=:n_2(\Pi,\Pi')$.
Note the following  simple properties of the graphs.

 \begin{lem}\label{3.2}
  
  i) If $Q,Q'\in K$, $w^*\in W^*$, then $Q\sim Q'$ if and only if
  $\psi_{w^*}(Q)\sim \psi_{w^*}(Q')$.
  
  ii) If $w^*\in W^*$, then the sequence $(Q_0,...,Q_n)$ is  a path
  if and only if $\big(\psi_{w^*}(Q_0),...,\psi_{w^*}(Q_n)\big)$ is  a path.
   \end{lem}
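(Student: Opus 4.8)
The plan is to prove both statements by reducing the ``if and only if'' claims to the injectivity of the maps $\psi_i$ together with the defining relation $(2.1)$, namely $\psi_i\circ\pi=\pi\circ\sigma_i$. Since part (ii) is essentially the statement that $\psi_{w^*}$ maps consecutive edges to consecutive edges and back, it will follow immediately from part (i) applied to each of the pairs $(Q_{h-1},Q_h)$, $h=1,\dots,n$: indeed, by definition $(Q_0,\dots,Q_n)$ is a path exactly when each $Q_{h-1}\sim Q_h$, and part (i) says each such relation is equivalent to $\psi_{w^*}(Q_{h-1})\sim\psi_{w^*}(Q_h)$. So the whole content sits in part (i), and part (ii) is a one-line corollary.

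For part (i), I would treat the two implications separately. The forward direction is the easy one: if $Q\sim Q'$ then by definition there exist $P,P'\in\vo$ and $\tw\in W^*$ with $Q=\psi_{\tw}(P)$, $Q'=\psi_{\tw}(P')$, and $Q\ne Q'$. Then $\psi_{w^*}(Q)=\psi_{w^* \tw}(P)$ and $\psi_{w^*}(Q')=\psi_{w^* \tw}(P')$ exhibit the same witness word $w^*\tw\in W^*$; since $\psi_{w^*}$ is a composition of the injective maps $\psi_i$, it is injective, so $\psi_{w^*}(Q)\ne\psi_{w^*}(Q')$, and hence $\psi_{w^*}(Q)\sim\psi_{w^*}(Q')$.

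The reverse direction is where I expect the main obstacle. Suppose $\psi_{w^*}(Q)\sim\psi_{w^*}(Q')$, so there are $P,P'\in\vo$ and $\tv\in W^*$ with $\psi_{w^*}(Q)=\psi_{\tv}(P)$ and $\psi_{w^*}(Q')=\psi_{\tv}(P')$, and the two images are distinct. The difficulty is that the witness word $\tv$ produced for the images need \emph{not} begin with $w^*$, so I cannot simply cancel $\psi_{w^*}$. The natural route is to note that $\psi_{w^*}(Q),\psi_{w^*}(Q')\in K_{w^*}$ (as $Q,Q'\in K$), and to argue that the representing word $\tv$ must in fact be comparable to $w^*$ and begin with it. Here I would invoke the finite ramification and the earlier lemmas: since both image points lie in the single copy $K_{w^*}$ and also in the cell $V_{\tv}$, Lemma~2.2 (on incomparable words) forces $\tv$ and $w^*$ to be comparable. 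If $\tv$ is a segment of $w^*$, then $P,P'$ being distinct vertices of $\vo$ would make their images under the remaining factor land in $\vo$, and Lemma~2.1 (characterizing when $\psi$ sends a point to $\vo$) together with the injectivity would have to be used to rule this out or pin down the structure. Once $\tv$ is shown to be of the form $w^*\tw$, I can write $\psi_{w^*}(Q)=\psi_{w^*}(\psi_{\tw}(P))$, apply injectivity of $\psi_{w^*}$ to conclude $Q=\psi_{\tw}(P)$ and likewise $Q'=\psi_{\tw}(P')$, and deduce $Q\sim Q'$ (with $Q\ne Q'$ following again from injectivity). The delicate bookkeeping of which word represents the pair, and the correct application of Lemmas~2.1 and~2.2 to force comparability and then segment-initiality, is the crux; everything else is formal manipulation of the injective maps $\psi_i$.
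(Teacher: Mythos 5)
Your proposal is correct, and its skeleton matches the paper's: the forward implication by concatenating words and using injectivity, part (ii) read off from part (i) edge by edge, and the reverse implication reduced to showing that the witness word representing the image pair must have $w^*$ as an initial segment, after which $\psi_{w^*}$ is cancelled. Where you genuinely diverge from the paper is in how that prefix property is forced. The paper compares lengths: writing $w^*_1=(i_1,\dots,i_m)$ for the witness word of the images, it notes that the two distinct image points lie in $K_{w^*}\cap K_{w^*_1}\cap \vn{m}$, deduces $|w^*|\le m$ from Lemma 2.3 (if $|w^*|>m$, the set $K_{w^*}\cap\vn{m}$ would contain at most one point), and then applies (2.7$'$) together with (2.4$''$) to the equal-length prefixes to conclude $(i_1,\dots,i_{|w^*|})=w^*$; Lemmas 2.1 and 2.2 are never invoked. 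You instead force comparability of the two words via Lemma 2.2 --- note that Lemma 2.2 by itself only identifies the $K$-intersection with the $V$-intersection, so you must also cite Corollary 2.4 (or (2.7$'$)) to know that this intersection has at most one point --- and then dispose of the case where the witness word is a proper segment of $w^*$ by Lemma 2.1: the complementary nonempty word $u$ would satisfy $\psi_u(Q)=P_j$ and $\psi_u(Q')=P_{j'}$ with $j\ne j'$, forcing $u=j^{(|u|)}$ and $u=(j')^{(|u|)}$ simultaneously, a contradiction (and even the weaker conclusion $Q,Q'\in\vo$ would still give $Q\sim Q'$ with empty witness word). Both routes are sound and of comparable length: the paper's length comparison lands directly in the good case and avoids a case split, while yours uses the arguably more natural comparable/incomparable dichotomy at the cost of one extra degenerate case, which your sketch handles correctly once Lemma 2.1 is applied as above.
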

 \begin{proof}
 i) The $\Rightarrow$ part is trivial. Prove $\Leftarrow$. Suppose
 $\psi_{w^*}(Q)\sim \psi_{w^*}(Q')$, so that there exist $w^*_1\in W^*$
 and $P_j, P_{j'}\in\vo$
 such that $\psi_{w^*}(Q)=\psi_{w^*_1}(P_j)$, 
 $\psi_{w^*}(Q')=\psi_{w^*_1}(P_{j'})$.
 Let $w^*_1:=(i_1,...,i_m)$,  $w^*:=(i'_1,...,i'_{m'})$.
 Let $\witi Q=\psi_{w^*}(Q)$,   $\witi Q'=\psi_{w^*}(Q')$. We have
 $\witi Q\ne \witi Q'$ and 
 
 $$\witi Q,\witi Q'\in K_{i'_1,...,i'_{ m'}}\cap 
 K_{i_1,...,i_{ m}}\cap \vn{m},$$
 
  thus, on one hand, by Lemma 2.3 we have $m'\le m$,
  on the other by (2.7$'$) and (2.4$''$),
  $(i_1,...,i_{ m'})=(i'_1,...,i'_{ m'})$.
  It follows that 
 $$\psi_{i'_1,...,i'_{m'}}(Q)=\psi_{i'_1,...,i'_{m'}}\big(\psi_{i_{m'+1},...,i_{m}}(P_j)\big),$$
 $$\psi_{i'_1,...,i'_{m'}}(Q')=\psi_{i'_1,...,i'_{m'}}\big(\psi_{i_{m'+1},...,i_{m}}(P_{j'})\big),$$
 
 and $Q=\psi_{i_{m'+1},...,i_{m}}(P_j)\sim \psi_{i_{m'+1},...,i_{m}}(P_{j'})=Q'$.
 Thus i) is proved, and ii) is a simple consequence of i).
 \end{proof}

 In the next lemmas we investigate some properties of the graphs connecting
 points lying in some specific subsets of $\vn{\infty}$. In particular,
 the statement of Lemma 3.3 corresponds to the intuitive idea that
 $V_{i_1,...,i_m}$ is a  sort of boundary of $K_{i_1,...,i_m}$. Hence,
 a path connecting a point of  $K_{i_1,...,i_m}$ to a point not in
 $K_{i_1,...,i_m}$ necessarily passes through $V_{i_1,...,i_m}$.

 \begin{lem}\label{3.3}
If $\Pi$ is a path connecting $Q\notin K_{i_1,...,i_m}$ to $Q'\in K_{i_1,...,i_m}$, then
there exists $\bar n\le n(\Pi)$ such that $Q_{n,\Pi}\notin K_{i_1,...,i_m}$ 
for every $n\le \bar n$ and
$Q_{\bar n+1,\Pi}\in V_{i_1,...,i_m}$.
\end{lem}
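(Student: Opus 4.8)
The plan is to locate the first vertex of $\Pi$ that lies inside $K_{i_1,...,i_m}$ and then apply Lemma 3.1 to the single edge straddling the transition from outside to inside. Write $\Pi=(Q_{0,\Pi},...,Q_{n(\Pi),\Pi})$. I would set $\bar n+1:=\min\{h:0\le h\le n(\Pi),\ Q_{h,\Pi}\in K_{i_1,...,i_m}\}$. This set is nonempty since $Q_{n(\Pi),\Pi}=Q'\in K_{i_1,...,i_m}$, so the minimum is well defined; and since $Q_{0,\Pi}=Q\notin K_{i_1,...,i_m}$, the minimizing index is at least $1$, so that $\bar n\ge 0$. By the minimality of $\bar n+1$, every vertex $Q_{h,\Pi}$ with $h\le\bar n$ lies outside $K_{i_1,...,i_m}$, which is exactly the first assertion. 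Since moreover $\bar n+1\le n(\Pi)$, we have $\bar n\le n(\Pi)-1\le n(\Pi)$, as required.

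It then remains to show $Q_{\bar n+1,\Pi}\in V_{i_1,...,i_m}$, and here Lemma 3.1 is precisely the tool. Because $(Q_{\bar n,\Pi},Q_{\bar n+1,\Pi})$ is an edge of $\Pi$, we have $Q_{\bar n,\Pi}\sim Q_{\bar n+1,\Pi}$; and the relation $\sim$ is visibly symmetric from its definition, the roles of the two points in $Q=\psi_{\tw}(P)$, $Q'=\psi_{\tw}(P')$ being interchangeable, so also $Q_{\bar n+1,\Pi}\sim Q_{\bar n,\Pi}$. Now $Q_{\bar n+1,\Pi}\in K_{i_1,...,i_m}$ while $Q_{\bar n,\Pi}\notin K_{i_1,...,i_m}$, so applying Lemma 3.1 to the pair $(Q_{\bar n+1,\Pi},Q_{\bar n,\Pi})$ yields $Q_{\bar n+1,\Pi}\in V_{i_1,...,i_m}$, which completes the argument.

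I do not expect a genuine obstacle here: once the crossing index $\bar n$ is isolated, the whole statement reduces to a single invocation of Lemma 3.1, which already encodes the geometric content that $V_{i_1,...,i_m}$ behaves like the boundary of $K_{i_1,...,i_m}$. The only points demanding a little care are the index bookkeeping — ensuring $\bar n\ge 0$ so that the edge $(Q_{\bar n,\Pi},Q_{\bar n+1,\Pi})$ genuinely exists, and $\bar n+1\le n(\Pi)$ so that $Q_{\bar n+1,\Pi}$ is an actual vertex — and the (trivial) remark that $\sim$ must be used in its symmetric form, since Lemma 3.1 is stated with the interior point in the first slot while the path supplies the exterior point there.
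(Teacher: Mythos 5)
Your proof is correct and follows essentially the same route as the paper's: the paper defines $\bar n$ as the largest index such that all vertices up to it lie outside $K_{i_1,...,i_m}$ (identical to your "first vertex inside, minus one") and then applies Lemma 3.1 to the crossing edge. Your explicit remarks on the symmetry of $\sim$ and the index bounds are details the paper leaves implicit, but the argument is the same.
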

\begin{proof}
Let $\bar n$ be the maximum $n$ such that 
$$Q_{n',\Pi}\notin  K_{i_1,...,i_{m}}\ \ \forall n'=0,...,n.$$

Then $Q_{\bar n,\Pi}\notin K_{i_1,...,i_m}$, $Q_{\bar n+1,\Pi}\in K_{i_1,...,i_m}$,
and
$Q_{\bar n,\Pi}\sim Q_{\bar n+1,\Pi}$. Thus, by Lemma 3.1, 
$Q_{\bar n+1,\Pi}\in V_{i_1,...,i_m}$.
\end{proof}

 \begin{lem}\label{3.4}
Suppose $\Pi$ is a  path connecting $Q\in K_{i_1,...,i_m}
\setminus \vn {m}$ to $Q'\in \vn {m}$. Then there exists $\bar n$
such that $Q_{\bar n,\Pi} \in V_{i_1,...,i_{m}}$
and $Q_{n,\Pi} \in K_{i_1,...,i_{m}}$ for every $n\le \bar n$.
\end{lem}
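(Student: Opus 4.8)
The plan is to follow the same strategy as in Lemma 3.3, reading the path from its starting point and stopping the first time it meets $\vn{m}$. Concretely, I would set $\bar n$ to be the least index $n$ for which $Q_{n,\Pi}\in\vn{m}$. This index is well defined: the endpoint $Q'=Q_{n(\Pi),\Pi}$ lies in $\vn{m}$, so the set of such indices is nonempty, while the initial point $Q=Q_{0,\Pi}$ lies in $K_{i_1,...,i_m}\setminus\vn{m}$ and is therefore excluded, giving $\bar n\ge 1$. By minimality, $Q_{n,\Pi}\notin\vn{m}$ for every $n<\bar n$.

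The core step is to show that the path cannot leave $K_{i_1,...,i_m}$ before time $\bar n$, i.e. that $Q_{n,\Pi}\in K_{i_1,...,i_m}$ for all $n\le\bar n$. I would argue by contradiction: if some $Q_{n,\Pi}$ with $n\le\bar n$ were outside $K_{i_1,...,i_m}$, then since $Q_{0,\Pi}\in K_{i_1,...,i_m}$ there would be a least such index $n_1\ge 1$, with $Q_{n_1-1,\Pi}\in K_{i_1,...,i_m}$ but $Q_{n_1,\Pi}\notin K_{i_1,...,i_m}$. These are consecutive vertices, hence $Q_{n_1-1,\Pi}\sim Q_{n_1,\Pi}$, so Lemma 3.1 applies and forces the inside vertex into the boundary cell: $Q_{n_1-1,\Pi}\in V_{i_1,...,i_m}\subseteq\vn{m}$. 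But $n_1-1<\bar n$, contradicting the minimality of $\bar n$ as the first index landing in $\vn{m}$. This is the part carrying the geometric content, and it is exactly here that the finite ramification hypothesis (through Lemma 3.1) enters; I expect this to be the only genuinely delicate point, the rest being bookkeeping. Note that the argument also explains why stopping at $\vn{m}$ is the right move: any attempt of the path to exit $K_{i_1,...,i_m}$ would itself produce a point of $\vn{m}$, so no exit can occur strictly before $\bar n$.

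Having established that $Q_{\bar n,\Pi}\in K_{i_1,...,i_m}\cap\vn{m}$, the conclusion is immediate from Lemma 2.3 with $m'=m$, whose inclusion (2.9) reads $K_{i_1,...,i_m}\cap\vn{m}\subseteq V_{i_1,...,i_m}$; thus $Q_{\bar n,\Pi}\in V_{i_1,...,i_m}$. Combined with $Q_{n,\Pi}\in K_{i_1,...,i_m}$ for all $n\le\bar n$, this is exactly the assertion of the lemma. Alternatively one can bypass Lemma 2.3 and repeat its short argument in place, distinguishing according to whether the $m$-cell containing $Q_{\bar n,\Pi}$ is $V_{i_1,...,i_m}$ or a different one, the latter case being handled directly by (2.7); but the cleaner route is the appeal to Lemma 2.3.
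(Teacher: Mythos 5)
Your proof is correct and is essentially the paper's argument: both rest on Lemma 3.1 (a step of the path cannot exit $K_{i_1,...,i_m}$ without its inside endpoint lying in $V_{i_1,...,i_m}$) and on the inclusion $K_{i_1,...,i_m}\cap V^{(m)}\subseteq V_{i_1,...,i_m}$ from Lemma 2.3. The only difference is organizational: the paper defines $\bar n$ as the last index up to which the path stays in $K_{i_1,...,i_m}$ and splits into the cases $\bar n=n(\Pi)$ (handled by Lemma 2.3) and $\bar n<n(\Pi)$ (handled by Lemma 3.1), whereas you take $\bar n$ to be the first hitting time of $V^{(m)}$ and use Lemma 3.1 inside a contradiction; the two choices of $\bar n$ need not coincide, but both satisfy the statement.
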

\begin{proof}
The proof is similar to that of Lemma 3.3 but a bit more complicated.
We have $Q_{0,\Pi}=Q$, $Q_{n(\Pi),\Pi}=Q'$.
Let $\bar n$ be the maximum $n\in[0,n(\Pi)]$ such that 
$$Q_{n',\Pi}\in  K_{i_1,...,i_{m}}\ \ \forall n'=0,...,n.$$

We will prove that $Q_{\bar n,\Pi}\in  V_{i_1,...,i_{m}}$.
 If $\bar n=n(\Pi)$,
then, in view of Lemma 2.3, 
$Q_{\bar n,\Pi}\in K_{i_1,...,i_{m}}\cap \vn m\subseteq V_{i_1,...,i_{m}}.$ 

\qua  If $\bar n< n(\Pi)$ then we have $Q_{\bar n,\Pi}\sim Q_{\bar n+1,\Pi}$
by the definition of a path, and 
$Q_{\bar n,\Pi}\in  K_{i_1,...,i_{m}},$
$Q_{\bar n+1,\Pi}\notin K_{i_1,...,i_{m}},$
by the definition of $\bar n$. Thus, we have  
$Q_{\bar n,\Pi} \in V_{i_1,...,i_{m}}$ by Lemma 3.1.
\end{proof}

Note that, given a path $(Q_0,...,Q_n)$  connecting $Q_0$ to
 $Q_n$, we can associate to it a sort of reverse path connecting $Q_n$ to
 $Q_0$, that is $(Q_n,...,Q_0)$. Thus, we can use Lemmas 3.3 and 3.4
 (and other similar lemmas) also in the reverse direction. This is what we will
 do in the proof  of Lemma 3.5.

\begin{lem}\label{3.5}
If $\Pi$ is a path connecting two points in $K_{i_1,...,i_m}$
and $\Pi$ is not entirely contained in  $K_{i_1,...,i_m}$, then there exist $l<m$ 
 and a
subpath of $\Pi$ of length greater than $2$,
connecting two points in $K_{i_1,...,i_{l+1}}$, entirely contained
in $K_{i_1,...,i_{l}}$.
\end{lem}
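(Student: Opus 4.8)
The plan is to descend through the nested copies until the path no longer fits, and then to extract a single excursion of $\Pi$ away from the copy so obtained. Write $C_j := K_{i_1,\dots,i_j}$ for $0 \le j \le m$, with $C_0 = K$; by (2.4$'$) these form a decreasing chain $C_0 \supseteq C_1 \supseteq \dots \supseteq C_m$. By hypothesis the two endpoints $Q_{0,\Pi}$ and $Q_{n(\Pi),\Pi}$ both lie in the deepest copy $C_m$.

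First I would take $l$ to be the largest index for which every vertex of $\Pi$ lies in $C_l$. Such an $l$ exists because $\Pi \subseteq K = C_0$, so $l \ge 0$; and since $\Pi$ is by hypothesis not entirely contained in $C_m$, maximality gives $l < m$ and, at the same time, that $\Pi$ is not entirely contained in $C_{l+1}$. Hence there is an index $j$ with $Q_{j,\Pi} \notin C_{l+1}$, whereas the two endpoints lie in $C_m \subseteq C_{l+1}$ because $l+1 \le m$; in particular $0 < j < n(\Pi)$.

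Next I would cut out the excursion straddling $j$. Let $a$ be the largest index below $j$ with $Q_{a,\Pi} \in C_{l+1}$ and $b$ the smallest index above $j$ with $Q_{b,\Pi} \in C_{l+1}$; both are well defined since the endpoints lie in $C_{l+1}$, and one gets $0 \le a < j < b \le n(\Pi)$. The subpath $(Q_{a,\Pi}, \dots, Q_{b,\Pi})$ then has its endpoints in $C_{l+1} = K_{i_1,\dots,i_{l+1}}$, is entirely contained in $C_l = K_{i_1,\dots,i_l}$ (as all of $\Pi$ is), and has length $b-a+1 \ge 3 > 2$ because $a < j < b$. By maximality of $a$ and minimality of $b$ every intermediate vertex lies outside $C_{l+1}$, so the excursion is genuine; this is exactly the subpath the lemma asks for.

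There is no deep obstacle here: once the decreasing chain of copies is in place the argument is pure maximality/minimality bookkeeping, and, notably, the crossing lemmas 3.3 and 3.4 are not needed. The only point demanding care is the choice of level: $l$ must be the \emph{last} level at which the whole path still fits, so that the path already fails to fit at level $l+1$ while its endpoints, lying in the deepest copy $C_m$, still belong to $C_{l+1}$. Getting this ordering right is what guarantees simultaneously the containment in $K_{i_1,\dots,i_l}$ and the presence of an outside vertex forcing length greater than $2$.
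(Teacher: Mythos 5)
Your proof is correct, and the excursion you extract is essentially the same subpath the paper produces, but your route to it is more elementary and worth comparing. The paper picks $l$ as the \emph{minimum} level such that some vertex of $\Pi$ lies in $K_{i_1,\dots,i_l}\setminus K_{i_1,\dots,i_{l+1}}$, which then requires a short extra argument ("by the definition of $l$") to see that the excursion stays inside $K_{i_1,\dots,i_l}$; your $l$ --- the \emph{maximum} level whose copy contains all of $\Pi$ --- is in fact the same number (the set of levels whose copy contains $\Pi$ is an initial segment, and its top is precisely the first level from which some vertex escapes), but with your dual description the containment in $K_{i_1,\dots,i_l}$ comes for free. The second difference is that the paper extracts the excursion by applying Lemma 3.3 in both directions from the escaping vertex, whereas you do the max/min index bookkeeping by hand; you are right that Lemmas 3.3 and 3.4 are not needed for the statement as written. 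What Lemma 3.3 buys the paper is a strictly stronger conclusion: via Lemma 3.1, the two endpoints $Q_{n_1,\Pi},Q_{n_2,\Pi}$ of the excursion land in the cell $V_{i_1,\dots,i_{l+1}}$, not merely in the copy $K_{i_1,\dots,i_{l+1}}$, and it is this stronger form that the paper actually invokes later (in the proof of Lemma 5.5 the subpath is asserted to connect points of $V_{i_1,\dots,i_{l+1}}$). Your construction delivers that as well at no extra cost --- since $Q_{a,\Pi}\sim Q_{a+1,\Pi}$ with $Q_{a+1,\Pi}\notin K_{i_1,\dots,i_{l+1}}$, Lemma 3.1 forces $Q_{a,\Pi}\in V_{i_1,\dots,i_{l+1}}$, and symmetrically for $Q_{b,\Pi}$ --- but you should add that one line if the lemma is to be cited in its de facto stronger form.
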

\begin{proof}
Since every point of $\Pi$ lies in $K=K_{\empti}$, by our hypothesis there exists
a  natural $l< m$  (possibly $0$) such that
there exists a point  $Q_{\bar n,\Pi}\in K_{i_1,...,i_{l}}\setminus  K_{i_1,...,i_{l+1}}$.
We can assume $l$ is the minimum natural number satisfying   such a property.
By Lemma 3.3 there exist $n_1$ and $n_2$ with $n_1<\bar n< n_2$ such that
$Q_{n,\Pi}\notin K_{i_1,...,i_{l+1}}$ for every $n=n_1+1,...,n_2-1$
and $Q_{n_1,\Pi}, Q_{n_2,\Pi}\in V_{i_1,...,i_{l+1}}\subseteq K_{i_1,...,i_{l}}$.
 But by the definition of $l$ we
then have $Q_{n,\Pi}\in K_{i_1,...,i_{l}}$ for every $n=n_1+1,...,n_2-1$.
\end{proof}

 \qua Suppose we are given a path $\Pi$.
 We now describe a way to construct a longer path  
 by inserting new paths between any pair of consecutive vertices of $\Pi$,
 and in Lemma 3.6 we will prove that any path connecting
 two given points $Q$ and $Q'$ 
 can be obtained by repeating this process
 starting from a path connecting $Q$ and $Q'$
  lying in the same level $\vn m$ 
 of the fractal as the one where $Q$ and $Q'$ stay.
 
\qua  More formally, let  $\Gamma$ be the set of the functions 
$\gamma$  from $\widehat J$ to
 the set of the strict paths such that $\gamma(\iota)$ is a $(\iota,\von)$-path.
  Let  $\bar \Gamma$ be the set of the functions $\bar\gamma$  from $\jet$ to
 the set of the strict paths such that $\bar\gamma(\iota, w)$ is a $(\iota,\von)$-path. 
  For $\bar\gamma\in\bar\Gamma$, let
 
  $$\bar D(\bar \gamma)\big(\psi_w(P_{j_1}),  \psi_w(P_{j_2})\big)=
 \psi_w\Big(\bar\gamma\big((j_1,j_2),w\big) \Big),$$
 $$\bar D(\bar\gamma)\big(Q_0,...,Q_n\big)=
 \bar D(\bar\gamma)\big(Q_0,Q_1\big)\circ\cdots\circ
\bar D(\bar\gamma)\big(Q_{n-1},Q_n\big),$$

 when $\Pi:=(Q_0,...,Q_n)$ is a  path. When this happens, clearly, 
  $\bar D(\bar\gamma)\big(\Pi\big)$ is a path as well.
  We will say that $\bar D(\bar\gamma)\big(\Pi\big)$ is the $\bar\gamma$-{\it insertion}
  of $\Pi$.
Moreover, note that if $Q_{0,\Pi}=Q_{0,\bar D(\bar\gamma)(\Pi)}$,
$Q_{n(\Pi),\Pi}=Q_{n\big(\bar D(\bar\gamma)(\Pi)\big),\bar D(\bar\gamma)(\Pi)}$,
in other words, $\Pi$ and $\bar D(\bar\gamma)(\Pi)$ have the same end-points.

 \begin{lem}\label{3.6}
If $\Pi'$ is a  strict path connecting two points $Q$ and $Q'$
of $\vn m$, then there exist a  strict path $\Pi$ lying in $\vn m$ connecting
$Q$ and $Q'$ and $\bar\gamma_1,...,\bar\gamma_r\in\bar\Gamma$ such that

$$\Pi'=\bar D(\bar\gamma_1)\circ\cdots\circ \bar D(\bar\gamma_r)(\Pi).$$
\end{lem}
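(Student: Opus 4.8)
The plan is to argue by induction on the maximal word length $M:=\max_{1\le h\le n(\Pi')}|\witi w(h,\Pi')|$ occurring in the edges of $\Pi'$. I first record that, since every edge $(Q_{h-1},Q_h)$ of $\Pi'$ has both endpoints in $V_{\witi w(h,\Pi')}\subseteq\vn M$ (because $|\witi w(h,\Pi')|\le M$), every vertex of $\Pi'$ lies in $\vn M$. If $M\le m$ then in fact every vertex lies in $\vn m$, so $\Pi'$ itself lies in $\vn m$; taking $\Pi=\Pi'$ (with $r=0$, or with $r=1$ and $\bar\gamma_1$ the insertion sending each $(\iota,w)$ to the trivial two-vertex path $(P_{\iota^{(1)}},P_{\iota^{(2)}})$, which acts as the identity on paths) settles the base case.

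For the inductive step assume $M>m$, so $m\le M-1$. Call a vertex of $\Pi'$ \emph{coarse} if it lies in $\vn{M-1}$ and \emph{fine} otherwise (so a fine vertex lies in $\vn M\setminus\vn{M-1}$); the endpoints $Q,Q'\in\vn m\subseteq\vn{M-1}$ are coarse. Listing the coarse vertices in the order they occur gives $R_0=Q,R_1,\dots,R_s=Q'$, and I cut $\Pi'$ at these vertices into consecutive \emph{blocks}, the $t$-th block being the subpath from $R_{t-1}$ to $R_t$ whose interior vertices are all fine. The heart of the proof is that each block, after stripping a common prefix, is $\psi_{w_t}$ of a $\von$-path. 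If a block has a fine interior vertex, then every edge of that block has a fine endpoint; since a fine vertex $S$ cannot lie in $V_{\witi w}$ with $|\witi w|\le M-1$ (that would force $S\in\vn{M-1}$), each such edge has word length exactly $M$. For two consecutive edges of the block, with words $u,u'$ of length $M$ sharing the fine vertex $S$, the length-$(M-1)$ prefixes must coincide: otherwise $u_{(M-1)},u'_{(M-1)}$ are distinct words of equal length, and (2.7) gives $S\in K_{u_{(M-1)}}\cap K_{u'_{(M-1)}}=V_{u_{(M-1)}}\cap V_{u'_{(M-1)}}\subseteq\vn{M-1}$, contradicting that $S$ is fine. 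Propagating this along the block, all its edges carry words with one common prefix $w_t$ of length $M-1$, so the block is $\psi_{w_t}$ of a sequence whose edges all lie in $1$-cells. Its two end vertices $R_{t-1},R_t$ lie in $V_{(w_t,\cdot)}\cap\vn{M-1}\subseteq V_{w_t}$ by Lemma 2.3, hence equal $\psi_{w_t}(P_{j_1}),\psi_{w_t}(P_{j_2})$ with $j_1\ne j_2$; thus $\psi_{w_t}^{-1}$ of the block is a strict $\big((j_1,j_2),\von\big)$-path. A block with no fine interior vertex is a single edge between two coarse vertices; such an edge cannot have word length $M$ (Lemma 2.3 again), so its word $w_t$ has $|w_t|\le M-1$ and it is already of the form $\psi_{w_t}(P_{\iota_t})$.

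With this decomposition in hand I define $\Pi'':=(R_0,\dots,R_s)$. It is a strict path (its vertices are distinct vertices of $\Pi'$), it connects $Q,Q'\in\vn m$, and each of its edges $(R_{t-1},R_t)=\psi_{w_t}(P_{\iota_t})$ has $|w_t|\le M-1$, so its maximal word length is at most $M-1$. I then define $\bar\gamma\in\bar\Gamma$ by setting $\bar\gamma(\iota_t,w_t)$ equal to $\psi_{w_t}^{-1}$ of the $t$-th block (a strict $(\iota_t,\von)$-path by the previous paragraph) and $\bar\gamma(\iota,w)$ equal to the trivial path $(P_{\iota^{(1)}},P_{\iota^{(2)}})$ on all remaining pairs. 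This is well defined because in a strict path the assignment $(\text{edge})\mapsto(\iota_t,w_t)$ is injective: two edges with the same pair $(\iota,w)$ would share both endpoints $\psi_w(P_{\iota^{(1)}}),\psi_w(P_{\iota^{(2)}})$, impossible among distinct vertices. By construction $\bar D(\bar\gamma)(R_{t-1},R_t)=\psi_{w_t}\big(\bar\gamma(\iota_t,w_t)\big)$ is exactly the $t$-th block, so concatenating over $t$ gives $\bar D(\bar\gamma)(\Pi'')=\Pi'$. Applying the induction hypothesis to $\Pi''$ yields a strict path $\Pi$ in $\vn m$ and $\bar\gamma_2,\dots,\bar\gamma_r$ with $\Pi''=\bar D(\bar\gamma_2)\circ\cdots\circ\bar D(\bar\gamma_r)(\Pi)$; setting $\bar\gamma_1:=\bar\gamma$ completes the induction.

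The step I expect to be the main obstacle is the prefix-coincidence claim, that is, verifying that a maximal fine excursion of $\Pi'$ is confined to a single level-$(M-1)$ cell $K_{w_t}$, so that it can be folded back by $\psi_{w_t}^{-1}$ into a genuine $\von$-path. This is exactly where finite ramification (2.7), together with the defining property of fine vertices, is used, and some care is needed at the two coarse endpoints of each block (handled by Lemma 2.3). The remaining points, namely the well-definedness of $\bar\gamma$ and the strictness and level bound for $\Pi''$, are then routine.
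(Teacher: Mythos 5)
Your proof is correct, but it is organized genuinely differently from the paper's. The paper inducts on the number $s(\Pi')$ of vertices of $\Pi'$ lying outside $\vn m$: it picks one vertex at the maximal level $\bar m$, applies Lemma 3.4 (in both directions along the path) to extract a \emph{single} excursion confined to an $(\bar m-1)$-cell with endpoints on that cell's boundary, deletes it to form $\Pi''$, and encodes it in an insertion $\bar\gamma$ having exactly one nontrivial value; each $\bar D(\bar\gamma_i)$ thus restores one excursion, and the induction terminates because $s$ drops. You instead induct on the maximal word length $M$ and strip a whole level at a time: you cut $\Pi'$ at all vertices lying in $\vn{M-1}$, show by the prefix-coincidence argument (a direct use of (2.7), with Lemma 2.3 handling the block endpoints) that every block with fine interior is confined to a single $(M-1)$-cell and pulls back under $\psi_{w_t}^{-1}$ to a strict $(\iota_t,\von)$-path, and package \emph{all} blocks into one $\bar\gamma$ per level. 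The geometric core is the same in both proofs (finite ramification confines high-level excursions to cells), but your route bypasses Lemmas 3.3--3.5 entirely — only (2.7), Lemma 2.3 and Lemma 3.2 ii) are needed — and yields the explicit bound $r\le M-m$ on the number of insertions; the paper's route reuses machinery already established (Lemma 3.4) and produces insertions that are sparse (one nontrivial value each), at the cost of as many induction steps as there are off-level vertices. Two points you should state explicitly rather than leave implicit: (a) that the representation $(R_{t-1},R_t)=\psi_{w_t}(P_{\iota_t})$ you construct is the \emph{canonical} one used in the definition of $\bar D(\bar\gamma)$, which is exactly the uniqueness of edge representations guaranteed by Corollary 2.4 (cf. the remark following the definition of $\alw$ in Section 3) — without this, $\bar D(\bar\gamma)$ applied to the edge $(R_{t-1},R_t)$ might not select the value $\bar\gamma(\iota_t,w_t)$ you intended; and (b) that $\psi_{w_t}^{-1}$ of a block is again a path, which is Lemma 3.2 ii).
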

\begin{proof}
We proceed by induction  on the number of points $s(\Pi')$ of $\Pi'$ not lying
in $\vn m$. The lemma is trivial if $s(\Pi')=0$. Suppose $s(\Pi')>0$
and let $Q_{\bar n,\Pi'}\notin \vn m$. 
Thus $Q_{\bar n,\Pi'}\in \vn {\bar m}\setminus
\vn {\bar m-1}$ for some $\bar m>m$. 
We can assume that $\bar m$ is the maximum index with such  a property, in other words

$$Q_{n,\Pi'}\in \vn{\bar m} \ \ \forall n=0,...,n(\Pi').\eqno (3.1)$$

Since $Q,Q'\in\vn m\subseteq
\vn {\bar m-1}$, we have
$0<\bar n< n(\Pi')$. Moreover,  there exist $\bar i_1,...,\bar i_{\bar m-1}$ such that 
$Q_{\bar n,\Pi'}\in K_{\bar i_1,...,\bar i_{\bar m-1}}$, 
thus $Q_{\bar n,\Pi'}\in K_{\bar i_1,...,\bar i_{\bar m-1}}\setminus \vn {\bar m-1}$.
By Lemma 3.4 there exist $n_1, n_2$ such that $n_1<\bar n< n_2$,
and $Q_{n_1,\Pi'}, Q_{n_2,\Pi'}\in V_{\bar i_1,...,\bar i_{\bar m-1}}$
and 
$$Q_{n,\Pi'}\in K_{\bar i_1,...,\bar i_{\bar m-1}} \  \ \text{if}\ n_1\le n\le n_2.\eqno (3.2)$$ 
Let 

$$\Pi''=\big(Q_{0,\Pi'},....,Q_{n_1,\Pi'}, Q_{n_2,\Pi'},...,Q_{n(\Pi'),\Pi'}\big).$$

Then, $\Pi''$ is a  strict path and $s(\Pi'')<s(\Pi')$. By the inductive
hypothesis there exist  a  strict path $\Pi$ lying in $\vn m$ connecting
$Q$ and $Q'$ and
$\bar\gamma_1,...,\bar\gamma_{r}\in\bar\Gamma$ such that

$$\Pi''=\bar D(\bar\gamma_1)\circ\cdots\circ \bar D(\bar\gamma_{r})(\Pi).$$

By (3.1) and (3.2) and (2.4$'$), for every 
$n$ such that
$n_1\le n\le n_2$ there exists   $\bar i_{\bar m}=1,...,k$ (depending on $n$) such that
we have 

$$Q_{n,\Pi'}\in K_{\bar i_1,...,\bar i_{\bar m-1},\bar i_{\bar m}}\cap 
\vn{\bar m}\subseteq V_{\bar i_1,...,\bar i_{\bar m-1},\bar i_{\bar m}}
\subseteq {\von}_{\bar i_1,...,\bar i_{\bar m-1}}$$

 where  the first inclusion follows from Lemma 2.3. Hence, in view of Lemma 3.2 ii),

$$\bar\Pi:=\big(\psi_{\bar i_1,...,\bar i_{\bar m-1}}^{-1}(Q_{n_1,\Pi'}),...,
\psi^{-1}_{\bar i_1,...,\bar i_{\bar m-1}}(Q_{n_2,\Pi'})\big)$$ 

 is a strict $(\bar\iota,\von)$-path for some $\bar\iota\in\widehat J$.  Therefore,
 $(Q_{n_1,\Pi'},Q_{n_2,\Pi'})=
\psi_{\bar i_1,...,\bar i_{\bar m-1}}(P_{\bar \iota})$, 
 and let $\bar\gamma\in\bar\Gamma$ be defined
by 

$$
\bar\gamma(\iota,w^*)=\begin{cases} \bar\Pi &\ \text{if} \ \,  
(\iota,w^*)=(\bar\iota,\bar i_1,...,\bar i_{\bar m-1})\\
P_{\iota} & \ \text{otherwise}.\end{cases} 
$$

Note that 

$$\bar D(\bar\gamma)(Q_{n_1,\Pi'}, Q_{n_2,\Pi'})=
\bar D(\bar\gamma) \big(\psi_{\bar i_1,...,\bar i_{\bar m-1}}(P_{\bar \iota})\big)=$$
$$\psi_{\bar i_1,...,\bar i_{\bar m-1}}\big(\bar\gamma
(\bar\iota, \bar i_1...\bar i_{\bar m-1})\big)=$$
$$\psi_{\bar i_1,...,\bar i_{\bar m-1}}(\bar\Pi)=(Q_{n_1,\Pi'},...,Q_{n_2,\Pi'})$$

so that

$$D\big(\bar \gamma)(Q_{h,\Pi''},Q_{h+1,\Pi''})=$$
$$\begin{cases} \bar\gamma(Q_{n_1,\Pi'}, Q_{n_2,\Pi'})=(Q_{n_1,\Pi'},...,Q_{n_2,\Pi'})
&\text{if}\ h=n_1\\  (Q_{h,\Pi''},Q_{h+1,\Pi''}) &\text  {otherwise}
\end{cases}$$

and, putting $Q_n$ short for $Q_{n,\Pi'}$, we have

$$\bar D(\bar\gamma) (\Pi'')=$$
$$\bar D(\bar\gamma) (Q_{0},Q_{1})\circ
\cdots\circ \bar D(\bar\gamma) (Q_{n_1},Q_{n_2})
\circ\cdots\circ
\bar D(\bar\gamma) (Q_{n(\Pi')-1},Q_{n(\Pi')})$$
$$=(Q_{0},Q_{1},...,Q_{n_1},..., 
Q_{n_2},...,Q_{n(\Pi')-1},Q_{n(\Pi')})=\Pi',$$

so that

$$\bar D(\bar\gamma)\circ
\bar D(\bar\gamma_1)\circ\cdots\circ 
\bar D(\bar\gamma_{r})(\Pi)=\bar D(\bar\gamma) (\Pi'')=\Pi'.
$$
\end{proof}

\section{Linear operators related to the paths} 
 
  In this Section we associate some special
   linear operators (or equivalently matrices)
  to  every element of $\Gamma$ of or $\bar\Gamma$.
   Basically,
   we interpret  the edges of the paths as 
   the basis of a  linear space. Recall
   that  we have defined the function $\alw$  on
   $\witi Y$ with real values. Then, 
   we associate to the given path the vector
   having as the $(Q,Q')$-component the value $\alw(Q,Q')$.
  More precisely,
 let $e_{\iota}$ be the elements of the canonical basis in $\bre^{\widehat J}$, that is
 $\big(e_{\iota}\big)_{\iota'}=\delta_{\iota,\iota'}$.
 For every $\gamma\in \Gamma$ we define a linear operator 
 $T\pal_{\gamma}$
 from $\bre^{\widehat J}$ to $\bre^{\widehat J}$ by
 
 $$T\pal_{\gamma}(e_{\iota})=\sum\limits_{\iota'\in \widehat J}
\witi\alpha_{\gamma,\iota,\iota'} e_{\iota'}, 
\quad \witi\alpha_{\gamma,\iota,\iota'}:=
 \sum\limits_{h:\witi \iota(h,\gamma(\iota))=\iota'}
 \alpha_{\witi w(h,\gamma(\iota))}$$
 
 or in other words
 
 $$T\pal_{\gamma}(e_{\iota})=
 \sum\limits_{h=1}^{n(\gamma(\iota))}
 \alpha_{\witi w(h,\gamma(\iota))} e_{\tilde\iota(h,\gamma(\iota))}.
 $$

 Now, we introduce similar notions related to $\bar \Gamma$ instead of to $\Gamma$. 
 The reason for which we introduce
 these notions  both in $\Gamma$ and
 in $\bar\Gamma$ is that on one hand we have 
 a closer relationship between $\bar D(\bar\gamma)$ and the linear operators 
 related to $\bar\Gamma$, on the other the linear operator
 related to $\Gamma$ are simpler to handle. However,
 for our purposes the two notions are equivalent, 
 more precisely,
the asymptotic behaviors of the composition of linear operators
are in some sense the same in both situations (Corollary 4.4).

\qua  Let $e_{\iota, \tw}$ be the elements of the canonical basis of
  $\bar Z$ where $\bar Z$ is the set
 of elements $z$ of 
 $\bre^{\jet}$ such that $z_{\iota, \tw}=0$ for almost  all $(\iota, \tw)$
 (i.e., for all $(\iota, \tw)$ but finitely many). That is,
 $\big(e_{\iota, \tw}\big)_{\iota', {\tw}'}=\delta_{(\iota, \tw),(\iota', {\tw}')}$.
 Let $\bar Z_m$ be the subset of $\bar Z$ of the elements $z$ of 
 $\bre^{\jet}$ such that $z_{\iota, \tw}=0$ for every 
 $\tw\notin \witi W_{m}$.
 We also set $e_{\iota}:=e_{\iota, \empti}$.
 For every $\bar\gamma\in \bar\Gamma$ we define a linear operator 
 $\bar T\pal_{\bar\gamma}$
 from $\bar Z$ into itself   by
 
 $$\bar T\pal_{\bar\gamma}(e_{\iota,\tw})=
 \sum\limits_{h=1}^{n(\bar\gamma(\iota,\tw))} 
  \alpha_{\witi w(h,\bar\gamma(\iota,\tw))} 
  e_{ \witi \iota\big(h,\bar\gamma(\iota,\tw)\big),
  \tw\witi w\big(h,\bar\gamma(\iota,\tw)\big) }.
 $$
 
 Note that for every
 $m\in\bna$, $\bar T\bar\pal_{\bar\gamma}$ maps $\bar Z_m$ into
 $\bar Z_{m+1}$.
 Let $H:\bre^{\widehat J}\to \bre$
 $\bar H:\bar Z \to \bre$
  be defined by 
 $$H(x)=\sum\limits_{\iota\in \widehat J} x_{\iota}, \quad
 \quad \bar H(x)=\sum\limits_{(\iota,\tw)\in \widehat J\times W^*} x_{\iota,\tw}.$$ 
 
 Let $f:\witi Y\to\bre$. We now define the {\it sum operators along a path},
that is,  some kind  of sum of
 $f$ along a given path. We will use such notions in the sequel specially
 when $f=\alw$, but also other $f$ will be considered in Section 5.
 Namely,  if
 $\Pi$ is a  path, we define 
 
 $$\witi\Sigma_{\Pi}(f)=\sum\limits_{h=1}^{n(\Pi)} f(Q_{h-1,\Pi},Q_{h,\Pi})
 e_{\witi \iota(h,\Pi), \witi w(h,\Pi)}\in\bar Z,$$
 
 $$\widehat\Sigma_{\Pi}(f)=\sum\limits_{h=1}^{n(\Pi)} f(Q_{h-1,\Pi},Q_{h,\Pi})
 e_{\witi \iota(h,\Pi)}\in\bre^{\widehat J},$$
 
 $$\Sigma_{\Pi}(f)=\sum\limits_{h=1}^{n(\Pi)} f(Q_{h-1,\Pi},Q_{h,\Pi}).$$

We will occasionally use such definitions (more specifically, the definition
of $\Sigma_{\Pi}(f)$) also when $\Pi$ is a weal path, with the convention $f(Q,Q)=0$. 
We will use the following simple properties in the sequel without mention
for paths $\Pi$, $\Pi'$.
$$\Sigma_{\Pi}(f)=\bar H\Big(\witi\Sigma_{\Pi}(f)\Big)=H\Big(\widehat\Sigma_{\Pi}(f)\Big).
$$

 $$\witi\Sigma_{\Pi\circ\Pi'}(f)=\witi\Sigma_{\Pi}(f)+\witi\Sigma_{\Pi'}(f),
 \quad \widehat\Sigma_{\Pi\circ\Pi'}(f)=\widehat\Sigma_{\Pi}(f)+\widehat\Sigma_{\Pi'}(f).$$

 Recall that, given $\iota=(j_1,j_2)\in\widehat J$, the symbol
 $P_{\iota}$ denotes the pair $(P_{j_1}, P_{j_2})$, which, of course can be interpreted
 as a path of length  $1$. We immediately have:

 $$\widehat\Sigma_{P_{\iota}}(\alw)= e_{\iota},   
 \quad    \witi\Sigma_{P_{\iota}}(\alw)= e_{\iota},    \eqno(4.1)    $$

\begin{rem}  \label{4.1}
\text{\rm When$\,\Pi\,$is a path, the following formulas are immediate}
\text{\rm  consequences of the definition
of} $\witi w(h,\Pi)$.
$$\alw(Q_{h-1,\Pi},Q_{h,\Pi})=\alpha_{\witi w(h,\Pi)}, \eqno(4.2)$$
$$\Sigma_{\Pi}(\alw)=\sum\limits_{h=1}^{n(\Pi)}  \alpha_{\witi w(h,\Pi)}.\eqno (4.2')$$
\text{\rm Note that (4.2) and (4.2$'$) are also valid when $\,\Pi\,$ is a weak path
$\,$if we} 

\text{\rm  use the convention $\alpha_{\witi w(h,\Pi)}=0$ when $Q_{h-1}=Q_h$.}
\end{rem}

 For every $\gamma\in\Gamma$ let  $I(\gamma)\in \bar\Gamma$ defined by 
$I(\gamma)(\iota,\tw)=\gamma(\iota)$.  Let $D(\gamma)=\bar D\big(I(\gamma)\big)$.
The following properties will be useful in the sequel.
For every $\iota,\iota'\in\widehat J$ and every $\tw\in W^*$ we have

$$\big(T\pal_{\gamma}(e_{\iota})\big)_{\iota'}=
\witi\alpha_{\gamma,\iota,\iota'}=\sum\limits_{{\tw}'\in W^*} 
\big(\bar T\pal_{I(\gamma)}(e_{\iota,w^*})\big)_{\iota', {\tw}'}\, , \eqno (4.3)$$
$$H\big(T\pal_{\gamma}(e_{\iota})\big)=\sum\limits_{h=1}^{n(\gamma(\iota))}
 \alpha_{\witi w(h,\gamma(\iota))}=\bar H\big(\bar T\pal_{I(\gamma)}(e_{\iota,w^*})\big).
\eqno (4.4)$$

Next lemma shows that the linear operators $T\pal_{\gamma}$
 and $\bar T\pal_{\bar\gamma}$ allows us to evaluate the sum operators
 along insertion paths.

 \begin{lem}\label{4.2}
 For every  path $\Pi$ 
 we have
 
 i) $\witi\Sigma_{\bar D(\bar\gamma_1)\circ\cdots\circ \bar
 D(\bar\gamma_n)(\Pi)}(\alw)=
 \bar T\pal_{\bar\gamma_1}\circ\cdots\circ \bar T\pal_{\bar\gamma_n}
 \big(\witi\Sigma_{\Pi}(\alw)\big)\ \ \forall\, \bar\gamma_1,...,\bar\gamma_n\in\bar\Gamma.$
 
 ii) $\widehat\Sigma_{D(\gamma_1)\circ\cdots\circ D(\gamma_n)(\Pi)}(\alw)=
  T\pal_{\gamma_1}\circ\cdots\circ  T\pal_{\gamma_n}
 \big(\widehat\Sigma_{\Pi}(\alw)\big)
 \ \ \forall\,  \gamma_1,...,\gamma_n\in \Gamma.$
 
 \end{lem}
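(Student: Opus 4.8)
The plan is to reduce everything to the single-insertion case $n=1$ and then iterate. For the inductive step in i), writing $\bar D(\bar\gamma_1)\circ\cdots\circ\bar D(\bar\gamma_n)(\Pi)=\bar D(\bar\gamma_1)\big(\Pi'\big)$ with $\Pi':=\bar D(\bar\gamma_2)\circ\cdots\circ\bar D(\bar\gamma_n)(\Pi)$, the $n=1$ identity gives $\witi\Sigma_{\bar D(\bar\gamma_1)(\Pi')}(\alw)=\bar T\pal_{\bar\gamma_1}\big(\witi\Sigma_{\Pi'}(\alw)\big)$, and the induction hypothesis evaluates $\witi\Sigma_{\Pi'}(\alw)$; composing the two yields the claim. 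So the whole content of i) lies in proving $\witi\Sigma_{\bar D(\bar\gamma)(\Pi)}(\alw)=\bar T\pal_{\bar\gamma}\big(\witi\Sigma_{\Pi}(\alw)\big)$ for a single $\bar\gamma\in\bar\Gamma$.

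For that single-insertion identity I would use the additivity of $\witi\Sigma$ under the concatenation $\circ$ (stated just before the lemma) to split both sides over the edges of $\Pi$. Since $\bar D(\bar\gamma)(\Pi)$ is by definition the concatenation of the inserted edge-paths $\bar D(\bar\gamma)(Q_{h-1,\Pi},Q_{h,\Pi})$, it suffices to check the identity one edge at a time, i.e. to show for each $h$ that $\witi\Sigma_{\bar D(\bar\gamma)(Q_{h-1,\Pi},Q_{h,\Pi})}(\alw)=\bar T\pal_{\bar\gamma}\big(\alpha_{\witi w(h,\Pi)}\,e_{\witi\iota(h,\Pi),\witi w(h,\Pi)}\big)$; summing over $h$ and using linearity then reproduces $\bar T\pal_{\bar\gamma}\big(\witi\Sigma_\Pi(\alw)\big)$, because $\witi\Sigma_\Pi(\alw)=\sum_h \alpha_{\witi w(h,\Pi)}\,e_{\witi\iota(h,\Pi),\witi w(h,\Pi)}$ by (4.2).

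The single-edge check is then a direct unwinding of definitions. Writing $\iota:=\witi\iota(h,\Pi)$ and $\tw:=\witi w(h,\Pi)$, the edge is $(Q_{h-1,\Pi},Q_{h,\Pi})=\psi_{\tw}(P_\iota)$, so $\bar D(\bar\gamma)(Q_{h-1,\Pi},Q_{h,\Pi})=\psi_{\tw}\big(\bar\gamma(\iota,\tw)\big)$; its $l$-th edge is $\psi_{\tw}\circ\psi_{\witi w(l,\bar\gamma(\iota,\tw))}\big(P_{\witi\iota(l,\bar\gamma(\iota,\tw))}\big)=\psi_{\tw\,\witi w(l,\bar\gamma(\iota,\tw))}\big(P_{\witi\iota(l,\bar\gamma(\iota,\tw))}\big)$, hence carries label $\big(\witi\iota(l,\bar\gamma(\iota,\tw)),\,\tw\,\witi w(l,\bar\gamma(\iota,\tw))\big)$ and $\alw$-value $\alpha_{\tw\,\witi w(l,\bar\gamma(\iota,\tw))}=\alpha_{\tw}\,\alpha_{\witi w(l,\bar\gamma(\iota,\tw))}$ by multiplicativity of $\alpha_{\cdot}$ over concatenation. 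Plugging these into the definition of $\witi\Sigma$ and factoring out $\alpha_{\tw}$ gives exactly $\alpha_{\tw}$ times the defining sum of $\bar T\pal_{\bar\gamma}(e_{\iota,\tw})$, which is the desired edge identity. There is no real obstacle here; the only point requiring care is the label bookkeeping, and the definition of $\bar T\pal_{\bar\gamma}$ — prepending $\tw$ to each inserted word and multiplying the $\alpha$'s — is precisely engineered to match the effect of applying $\psi_{\tw}$ to the inserted path.

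For part ii) I would avoid redoing the computation by introducing the forgetful projection $\Phi:\bar Z\to\bre^{\widehat J}$, $\Phi(e_{\iota,\tw}):=e_\iota$. By their definitions $\widehat\Sigma_{\Pi}(f)=\Phi\big(\witi\Sigma_{\Pi}(f)\big)$, and a one-line check on basis vectors gives the intertwining $\Phi\circ\bar T\pal_{I(\gamma)}=T\pal_{\gamma}\circ\Phi$ (both sides send $e_{\iota,\tw}$ to $\sum_l \alpha_{\witi w(l,\gamma(\iota))}\,e_{\witi\iota(l,\gamma(\iota))}$, which is $T\pal_\gamma(e_\iota)$). Since $D(\gamma)=\bar D(I(\gamma))$, applying $\Phi$ to part i) with $\bar\gamma_j=I(\gamma_j)$ and pushing it through the composition via the intertwining relation yields ii) at once. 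Alternatively ii) can be proved by the identical edge-by-edge argument as i), replacing $\witi\Sigma$ by $\widehat\Sigma$, $\bar T\pal$ by $T\pal$, and dropping the word component of every label.
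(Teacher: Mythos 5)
Your proposal is correct and follows essentially the same route as the paper: reduce to the single-insertion case $n=1$, split $\witi\Sigma$ over the edges of $\Pi$ by additivity under concatenation, and verify on a single edge that $\witi\Sigma_{\psi_{\tw}(\bar\gamma(\iota,\tw))}(\alw)=\alpha_{\tw}\,\bar T\pal_{\bar\gamma}(e_{\iota,\tw})$, which is exactly the paper's key computation. The only difference is cosmetic: for ii) the paper simply says the argument is analogous, while you deduce it formally from i) via the forgetful projection $\Phi(e_{\iota,\tw})=e_{\iota}$ and the intertwining $\Phi\circ\bar T\pal_{I(\gamma)}=T\pal_{\gamma}\circ\Phi$ — a correct and somewhat cleaner way to dispose of that part.
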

 \begin{proof} it suffices to prove the Lemma when $n=1$.
  Note that for every $\iota\in \widehat J$, every
   $\tw\in W^*$ and every $\bar\gamma\in\bar\Gamma$
 we have
 
 $$\witi\Sigma_{\psi_{\tw}(\bar\gamma(\iota,\tw))} (\alw)=
\sum\limits_{h=1}^{n(\bar\gamma(\iota,\tw))}
\alpha_{\tw \witi\iota(h,\bar\gamma(\iota,\tw))}
e_{\witi w(h,\bar\gamma(\iota,\tw)), \tw\witi w(h,\bar\gamma(\iota,\tw))}$$
 $$=\alpha_{\tw}\bar T\pal_{\bar\gamma}(e_{\iota,\tw}).$$

Let $\bar\gamma\in\bar\Gamma$. We thus have
 
 $$\witi\Sigma_{\bar D(\bar\gamma)(\Pi)}(\alw)=
 \sum\limits_{h=1}^{n(\Pi)}\witi \Sigma_{\bar D(\bar\gamma)(Q_{h-1,\Pi}, Q_{h,\Pi})}
 (\alw)=$$
 $$ \sum\limits_{h=1}^{n(\Pi)} \witi\Sigma_{D (\bar\gamma)
\big (\psi_{\witi w(h,\Pi)}(P_{\witi\iota(h,\Pi)})\big)} (\alw)=$$
$$ \sum\limits_{h=1}^{n(\Pi)} \witi\Sigma_{\psi_{\witi w(h,\Pi)}
\Big(\bar\gamma\big(\witi\iota(h,\Pi),
\witi w(h,\Pi)\big)\Big)} (\alw)=$$ 
$$ \sum\limits_{h=1}^{n(\Pi)}
\alpha_{\witi w(h,\Pi)} \bar T\pal_{\bar\gamma} (e_{ \witi\iota(h,\Pi),
\witi w(h,\Pi)  })=$$
$$\bar T\pal_{\bar\gamma}\Big(\sum\limits_{h=1}^{n(\Pi)}
\alpha_{\witi w(h,\Pi)}  e_{ \witi\iota(h,\Pi),
\witi w(h,\Pi)  }\Big)=\bar T\pal_{\bar\gamma}\big(\witi \Sigma_{\Pi}(\alw)\big).
$$

This proves i), and ii) can be proved similarly.
\end{proof}

 We are now going to introduce a notion for polyratios 
 which will turn out to be equivalent 
 to those of metric and of as. metric.

 {\it \quad We say that the polyratio $\alpha$ is $(T,\Gamma)$-uniformly positive
 (or short  $(T,\Gamma)$-u.p.) if
 there exists $c_{3,\alpha}>0$ such that
for every $n\in\bna$, every  $\gamma_1,...,\gamma_n\in\Gamma$  
and every $\iota\in \widehat J$, we have

$$H\big(T\pal_{\gamma_1}\circ\cdots \circ T\pal_{\gamma_n}(e_{\iota})\big)
\ge c_{3,\alpha}. \eqno (4.5)$$}

 {\it \quad We say that the polyratio
 $\alpha$ is $(\bar T,\bar\Gamma)$-uniformly positive
 (or short  $(\bar T,\bar\Gamma)$-u.p.) if
 there exists $c_{3,\alpha}>0$ such that
for every $m\in\bna$, every  $\bar\gamma_1,...,\bar\gamma_m
\in\bar\Gamma$  
and every $\iota\in \widehat J$, we have

$$\bar H\big(\bar T\pal_{\bar\gamma_1}\circ\cdots \circ 
\bar T\pal_{\bar\gamma_m}(e_{\iota})\big)
\ge c_{3,\alpha}. \eqno (4.5')$$}

Note that, since $\widehat J$ and $\von$ are finite sets, and the elements
of $\Gamma$ are by definition {\it strict} $\von$-paths, then $\Gamma$ is a finite set
as well. Thus, as hinted in Introduction, the notion
of $(T,\Gamma)$-u.p. is related to that of joint spectral radius
and to that of joint spectral subradius of a finite set of matrices. On the contrary,
$\bar\Gamma$ is an infinite set, thus the property of being
$(\bar T,\bar \Gamma)$-u.p. is more complicated to verify. However, as we will
see in   Corollary 4.4, the two properties are equivalent. We need a Lemma.

\begin{lem}\label{4.3}
i) For every $\gamma_1,...,\gamma_{m+1}\in \Gamma$,
we have

$$ H\Big(T\pal_{\gamma_{1} }\circ\cdots\circ T\pal_{\gamma_{m+1}}(e_{\iota})\Big)=
\sum\limits_{\iota'\in \widehat J}
T\pal_{\gamma_{m+1} } (e_{\iota})_{\iota'} H
\Big( T\pal_{\gamma_{1} }\circ\cdots\circ T\pal_{\gamma_{m}}(e_{\iota'})\Big).
$$

ii) For every $\bar\gamma_1,...,\bar\gamma_{n+1}\in \bar\Gamma$,
we have

$$ \bar H\Big(\bar T\pal_{\bar \gamma_{1} }\circ\cdots\circ 
\bar T\pal_{\bar\gamma_{m+1}}(e_{\iota,\tw})\Big)=$$
$$\sum\limits_{(\iota',{\tw}') \in \jet}
\bar T\pal_{\bar \gamma_{m+1} } (e_{\iota, \tw})_{\iota',{\tw}'} \bar H
\Big( \bar T\pal_{\bar \gamma_{1} }\circ\cdots\circ 
\bar T\pal_{\bar \gamma_{m}}(e_{\iota',{\tw}'})\Big)=$$
$$\sum\limits_{h=1}^{n(\bar\gamma_{m+1}(\iota,\tw))}
 \witi \alpha_h
  \bar H\Big( \bar T\pal_{\bar \gamma_{1} }\circ\cdots\circ 
\bar T\pal_{\bar \gamma_{m}}(e_{ \witi \iota\big(h,\bar\gamma_{m+1}(\iota,\tw)\big),
  \tw\witi w\big(h,\bar\gamma_{m+1}(\iota,\tw)\big) })\Big),$$
$$\witi \alpha_h:= \alpha_{\witi w(h,\bar\gamma_{m+1}(\iota,\tw))}. $$

iii) Given $ \gamma_l\in  \Gamma$,  $l=1,2,...,m$, for every
$\iota\in\widehat J$ and $\tw \in W^*$
we have

$$H\Big(T\pal_{\gamma_{1} }\circ\cdots\circ T\pal_{\gamma_{m}}(e_{\iota})\Big)=
\bar H\Big(\bar T\pal_{I(\gamma_{1}) }\circ\cdots\circ 
\bar T\pal_{I(\gamma_{m})}(e_{\iota,\tw})\Big).$$

iv) Given $\bar \gamma_l\in \bar\Gamma$, 
$l=1,2,... ,m$,  there exist 
$\gamma_l\in \Gamma$  such that 
for  every $\iota\in \widehat J$ we have

$$H\Big(T\pal_{\gamma_{1} }\circ\cdots\circ T\pal_{\gamma_{m}}(e_{\iota})\Big)\le
 \bar H\Big(\bar T\pal_{\bar \gamma_{1} }\circ\cdots\circ 
\bar T\pal_{\bar\gamma_{m}}(e_{\iota})\Big). \eqno (4.6)
$$

\end{lem}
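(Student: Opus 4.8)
The plan is to establish the four parts of Lemma 4.3 by unwinding the definitions of the operators $T\pal_\gamma$ and $\bar T\pal_{\bar\gamma}$, together with the linearity functionals $H$ and $\bar H$. The key observation driving all four parts is that both $H$ and $\bar H$ act as ``row-sum'' functionals: applying $H$ (resp. $\bar H$) to a vector returns the sum of all its coordinates, and the operators $T\pal_\gamma$ (resp. $\bar T\pal_{\bar\gamma}$) have nonnegative entries. Thus the essential content is a bookkeeping identity about how coordinate-sums compose.

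\textbf{Parts i) and ii).} These are elementary decompositions. First I would write $T\pal_{\gamma_1}\circ\cdots\circ T\pal_{\gamma_{m+1}}(e_\iota)$ by expanding $T\pal_{\gamma_{m+1}}(e_\iota)=\sum_{\iota'} T\pal_{\gamma_{m+1}}(e_\iota)_{\iota'}\, e_{\iota'}$, then apply $T\pal_{\gamma_1}\circ\cdots\circ T\pal_{\gamma_m}$ to each $e_{\iota'}$ by linearity, and finally apply $H$. Since $H$ is linear, it commutes with the outer sum over $\iota'$, yielding precisely the claimed identity in i). For ii) the argument is identical with $(\iota,\tw)$ and $\bar Z$ replacing $\iota$ and $\bre^{\widehat J}$; the only extra step is to recognize, using the explicit formula for $\bar T\pal_{\bar\gamma_{m+1}}(e_{\iota,\tw})$, that the coordinate $\bar T\pal_{\bar\gamma_{m+1}}(e_{\iota,\tw})_{\iota',{\tw}'}$ is nonzero exactly for the finitely many indices $(\iota',{\tw}')=\big(\witi\iota(h,\bar\gamma_{m+1}(\iota,\tw)),\,\tw\witi w(h,\bar\gamma_{m+1}(\iota,\tw))\big)$ appearing in the sum over $h$, with weight $\witi\alpha_h=\alpha_{\witi w(h,\bar\gamma_{m+1}(\iota,\tw))}$. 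This turns the abstract coordinate-sum into the concrete sum over edges of $\bar\gamma_{m+1}(\iota,\tw)$, giving the second equality.

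\textbf{Part iii).} This is the ``$\Gamma$ embeds into $\bar\Gamma$'' statement and I expect it to follow by induction on $m$, invoking (4.4) for the base case $m=1$ and the matching relation (4.3) for the inductive step. The point is that $I(\gamma)(\iota,\tw)=\gamma(\iota)$ ignores the word argument, so by (4.3) the $\Gamma$-coordinate-sum $\big(T\pal_\gamma(e_\iota)\big)_{\iota'}$ equals the sum over ${\tw}'$ of $\big(\bar T\pal_{I(\gamma)}(e_{\iota,\tw})\big)_{\iota',{\tw}'}$. Feeding this through the decompositions from i) and ii) lets the induction carry through, and crucially the final value is independent of $\tw$ because the equality holds for every $\tw\in W^*$. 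I would be careful to phrase the induction on the number of composed operators so that the hypothesis is applied at index $\iota'$ after peeling off the outermost factor.

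\textbf{Part iv).} \emph{This is where I expect the real work to lie.} Here one must \emph{produce} elements $\gamma_l\in\Gamma$ from given $\bar\gamma_l\in\bar\Gamma$ so that the $\Gamma$-sum is dominated by the $\bar\Gamma$-sum. The natural construction is to define $\gamma_l(\iota)$ to be $\bar\gamma_l(\iota,\tw)$ for a \emph{cleverly chosen} word $\tw$ depending on $\iota$ (and possibly on $l$)—namely, for each $\iota$ one should pick the word $\tw$ that \emph{minimizes} the relevant coordinate-sum, so that collapsing the full $\bar\Gamma$-operator (which tracks all words) down to a single-word $\Gamma$-operator can only decrease the total, yielding the inequality (4.6) rather than equality. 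The subtlety is that in a composition $\bar T\pal_{\bar\gamma_1}\circ\cdots\circ\bar T\pal_{\bar\gamma_m}$ the word indices grow at each stage, so a single choice of $\tw$ for each $\iota$ at the top level does not obviously control the nested sums; I would need to argue, again by induction using part ii), that choosing at each level the word achieving the minimum of $\bar H\big(\bar T\pal_{\bar\gamma_1}\circ\cdots\circ\bar T\pal_{\bar\gamma_{l-1}}(e_{\iota,\tw})\big)$ over the finitely many words $\tw$ that actually arise produces a legitimate $\gamma_l\in\Gamma$ and that the resulting telescoping bound gives the one-sided inequality. Verifying that these minimizers assemble into genuine \emph{strict} $\von$-paths (so that $\gamma_l\in\Gamma$ as required) is the delicate point to check.
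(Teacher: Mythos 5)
Parts i)--iii) of your proposal are correct and essentially coincide with the paper's proof: i) and ii) are exactly the coordinate-expansion/linearity computations, and iii) is the induction on $m$ with base case (4.4), whose step combines i), ii), (4.3), and the fact (which you correctly single out) that the inductive value is independent of the word $\tw$.

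The gap is in iv). Your overall strategy (collapse each $\bar\gamma_l$ to a $\gamma_l$ by choosing one word per $\iota$, chosen by a minimization, then induct) is the paper's, but the quantity you propose to minimize is the wrong one, and the recipe as written fails. You choose $\tw$ minimizing $\bar H\big(\bar T\pal_{\bar\gamma_1}\circ\cdots\circ\bar T\pal_{\bar\gamma_{l-1}}(e_{\iota,\tw})\big)$ and set $\gamma_l(\iota):=\bar\gamma_l(\iota,\tw)$. This quantity does not involve $\bar\gamma_l$ at all, so it cannot distinguish between the completely unrelated paths $\bar\gamma_l(\iota,\cdot)$; it also conflates the word seen by $\bar\gamma_l$ with the word seen by $\bar\gamma_{l-1}$, which in the true composition is $\tw\,\witi w\big(h,\bar\gamma_l(\iota,\tw)\big)$, not $\tw$. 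Concretely, take $m=2$: when $l=1$ your minimized quantity is identically $1$, so any word may be chosen. On the Gasket let $\iota=\iota'=(1,2)$, let $\bar\gamma_2(\iota,\empti)$ be the path $(P_1,\psi_1(P_2),P_2)$, whose two edges carry the words $\tw_a=(1)$, $\tw_b=(2)$ and both have index $\iota'$, and let $\Sigma_{\bar\gamma_1(\iota',\tw_a)}(\alw)=1$ while $S:=\Sigma_{\bar\gamma_1(\iota',\tw_b)}(\alw)$ is large. Choosing $\tw_b$ gives $H\big(T\pal_{\gamma_1}\circ T\pal_{\gamma_2}(e_\iota)\big)=(\alpha_{\tw_a}+\alpha_{\tw_b})S$, which strictly exceeds $\bar H\big(\bar T\pal_{\bar\gamma_1}\circ\bar T\pal_{\bar\gamma_2}(e_\iota)\big)=\alpha_{\tw_a}+\alpha_{\tw_b}S$, so (4.6) fails. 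The paper instead minimizes the hybrid quantity $\theta_\iota(\tw)=\sum_{\iota'}\witi\alpha_{\bar\gamma_l,\iota,\iota',\tw}\,H\big(T\pal_{\gamma_1}\circ\cdots\circ T\pal_{\gamma_{l-1}}(e_{\iota'})\big)$ over $\tw\in\witi W_{m-l}$, coupling the coefficients of the new (innermost) operator at $\tw$ with the already-constructed $\Gamma$-sums, and it carries the inductive invariant (4.8) uniformly over \emph{all} $\tw\in\witi W_{m-l}$; this uniformity, together with the fact that $\bar T\pal_{\bar\gamma_l}$ applied to $e_{\iota,\tw}$ produces only words one letter longer, is exactly what dissolves the nested-sums worry you raised. (Your pure-minimization idea can be repaired by including $\bar\gamma_l$: minimize $\bar H\big(\bar T\pal_{\bar\gamma_1}\circ\cdots\circ\bar T\pal_{\bar\gamma_l}(e_{\iota,\tw})\big)$ over $\tw\in\witi W_{m-l}$ and prove inductively that the $\Gamma$-sum is at most this minimum; that variant works for the same reasons.) Finally, the point you flag as delicate --- that the minimizers assemble into strict $\von$-paths so that $\gamma_l\in\Gamma$ --- is actually automatic: by the definition of $\bar\Gamma$, every value $\bar\gamma_l(\iota,\tw)$ is already a strict $(\iota,\von)$-path, so any choice of word yields an element of $\Gamma$.
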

\begin{proof}
i)  Note that

$$H\Big(T\pal_{\gamma_{1} }\circ\cdots\circ T\pal_{\gamma_{m+1}}(e_{\iota})\Big)=$$
$$\sum\limits_{\iota''\in \widehat J}  \Big(\big(T\pal_{\gamma_{1} }\circ
\cdots \circ T\pal_{\gamma_m}\big)
\circ T\pal_{\gamma_{m+1}}(e_{\iota})\Big)_{\iota''}=$$
$$\sum\limits_{\iota''\in \widehat J}  \Big( T\pal_{\gamma_{1} }
\circ\cdots \circ T\pal_{\gamma_m}
 \big(\sum\limits_{\iota'\in \widehat J}
T\pal_{\gamma_{m+1}}(e_{\iota})_{\iota'} e_{\iota'}\big)\Big)_{\iota''}=$$
$$\sum\limits_{\iota''\in \widehat J} 
\Big( \sum\limits_{\iota'\in \widehat J}
T\pal_{\gamma_{m+1} } (e_{\iota})_{\iota'}
T\pal_{\gamma_1}\circ\cdots\circ T\pal_{\gamma_m}(e_{\iota'} )
\Big)_{\iota''}=$$
$$\sum\limits_{\iota'\in \widehat J} T\pal_{\gamma_{m+1} } (e_{\iota})_{\iota'}
 \Big(\sum\limits_{\iota''\in \widehat J}
T\pal_{\gamma_1}\circ\cdots\circ T\pal_{\gamma_m}(e_{\iota'} )_{\iota''}\Big)=$$
$$\sum\limits_{\iota'\in \widehat J}
T\pal_{\gamma_{m+1} } (e_{\iota})_{\iota'} H
\Big( T\pal_{\gamma_{1} }\circ\cdots\circ T\pal_{\gamma_{m}}(e_{\iota'})\Big)
$$

and i) is proved. ii) the proof of the first equality 
is very similar and is omitted. The second equality follows
from the definition of $\bar T\pal_{\bar\gamma_{m+1}}$.

iii) The case $m=1$ follows from (4.4). The general case 
follows by induction. In fact,
if iii) holds for $m$, by i),
ii),  (4.3)  and the definitions of $T(\gamma)$ 
and $\bar T(\bar\gamma)$, $H$ and $\bar H$, we have. 

$$H\Big(T\pal_{\gamma_{1} }\circ\cdots\circ T\pal_{\gamma_{m+1}}(e_{\iota})\Big)$$
$$=\sum\limits_{\iota'\in \widehat J}
T\pal_{\gamma_{m+1} } (e_{\iota})_{\iota'} H
\Big( T\pal_{\gamma_{1} }\circ\cdots\circ T\pal_{\gamma_{m}}(e_{\iota'})\Big)$$
$$=\sum\limits_{\iota'\in \widehat J}
\bigg(\sum\limits_{{\tw}'\in W^*}
\bar T\pal_{I(\gamma_{m+1}) } (e_{\iota, \tw})_{\iota',{\tw}'} \bar H
\Big( \bar T\pal_{I( \gamma_{1}) }\circ\cdots\circ 
\bar T\pal_{I(\gamma_{m})}(e_{\iota',{\tw}'})\Big)\bigg)$$
$$=\sum\limits_{(\iota', {\tw}')\in \jet}
\bar T\pal_{I(\gamma_{m+1}) } (e_{\iota, \tw})_{\iota',{\tw}'} \bar H
\Big( \bar T\pal_{I( \gamma_{1}) }\circ\cdots\circ 
\bar T\pal_{I(\gamma_{m})}(e_{\iota',{\tw}'})\Big)$$
$$=\bar H\Big(\bar T\pal_{I(\gamma_{1}) }\circ\cdots\circ 
\bar T\pal_{I(\gamma_{m+1})}(e_{\iota,\tw})\Big),
$$

and iii) holds for $m+1$.

iv)  Note that for every $\iota,\iota'\in \widehat J$, every $w^*\in \witi W_m$ and
every $\bar\gamma\in\bar\Gamma$
we have the following simple analog of (4.3)

$$\sum\limits_{{\tw}'\in \witi W^*}
\big(\bar T\pal_{\bar \gamma}(e_{\iota,w^*})\big)_{\iota', {\tw}'}=
\sum\limits_{{\tw}'\in \witi W_{m+1}}
\big(\bar T\pal_{\bar \gamma}(e_{\iota,w^*})\big)_{\iota', {\tw}'}=
\witi\alpha_{\bar\gamma,\iota,\iota',w^*}, \eqno (4.7)$$
$$\witi\alpha_{\bar\gamma,\iota,\iota',w^*}:=
 \sum\limits_{h:\witi \iota(h,\bar \gamma(\iota,w^*))=\iota'}
 \alpha_{\witi w(h,\bar \gamma(\iota,w^*))}.$$

We define $\gamma_{l}$ by induction. Suppose we have defined
$\gamma_1,...,\gamma_l$, $l<m$,  satisfying 

$$H\Big(T\pal_{\gamma_{1} }\circ\cdots\circ T\pal_{\gamma_{l}}(e_{\iota})\Big)\le
 \bar H\Big(\bar T\pal_{\bar \gamma_{1} }\circ\cdots\circ 
\bar T\pal_{\bar\gamma_{l}}(e_{\iota,w^*})\Big) \ \forall  \tw \in \witi W_{m-l}, \eqno (4.8)
$$

 and define $\gamma_{l+1}$
in this way. Let 

$$\theta_{\iota}{\tw}:=
\sum\limits_{\iota'\in \widehat J} \witi\alpha_{\bar\gamma_{l+1},\iota,\iota',w^*}
H\Big( T\pal_{\gamma_{1} }\circ\cdots\circ T\pal_{\gamma_{l}}(e_{\iota'})\Big),$$

$$\gamma_{l+1}(\iota)=\bar\gamma_{l+1}(\iota,w^*(\iota))$$

where for every given $\iota\in \widehat J$, $w^*(\iota)$ is an element
of $\witi W_{m-l-1}$ minimizing $\theta_{\iota}$, i.e., such that 

$$\theta_{\iota}\big(\tw(\iota)\big)\le
 \theta_{\iota}{\tw}\quad\forall\, \tw\in \witi W_{m-l-1}.
$$ 

Note that, in view of (4.3) and (4.7) we have
$$\sum\limits_{{\tw}'\in \witi W_{m-l}}
\bar T\pal_{\bar \gamma_{l+1} } (e_{\iota, \tw(\iota)})_{\iota',{\tw}'}=
T\pal_{ \gamma_{l+1} } (e_{\iota})_{\iota'} \eqno(4.9)
$$

for every  $\iota,\iota'\in\widehat J$.
Thus, for every $\tw\in \witi W_{m-l-1}$ we have

$$\bar H\Big(\bar T\pal_{\bar \gamma_{1} }\circ\cdots\circ 
\bar T\pal_{\bar\gamma_{l+1}}(e_{\iota,w^*})\Big)$$
$$=\sum\limits_{(\iota',{\tw}')\in \jet}
\bar T\pal_{\bar \gamma_{l+1} } (e_{\iota, \tw})_{\iota',{\tw}'} \bar H
\Big( \bar T\pal_{\bar \gamma_{1} }\circ\cdots\circ 
\bar T\pal_{\bar \gamma_{l}}(e_{\iota',{\tw}'})\Big)\eqno (\text{by\  ii)})$$
$$=\sum\limits_{\iota'\in \widehat J} \sum\limits_{{\tw}'\in \witi W_{m-l}}
\bar T\pal_{\bar \gamma_{l+1} } (e_{\iota, \tw})_{\iota',{\tw}'} \bar H
\Big( \bar T\pal_{\bar \gamma_{1} }\circ\cdots\circ 
\bar T\pal_{\bar \gamma_{l}}(e_{\iota',{\tw}'})\Big)$$
$$\ge \sum\limits_{\iota'\in \widehat J} \sum\limits_{{\tw}'\in \witi W_{m-l}}
\bar T\pal_{\bar \gamma_{l+1} } (e_{\iota, \tw})_{\iota',{\tw}'}   H
\Big(  T\pal_{\gamma_{1} }\circ\cdots\circ 
T\pal_{\gamma_{l}}(e_{\iota'})\Big) \eqno (\text{by\  (4.8)})$$
$$=\theta_{\iota}(\tw)
\ge \theta_{\iota}\big(\tw(\iota)\big)$$
$$=\sum\limits_{\iota'\in \widehat J} \sum\limits_{{\tw}'\in \witi W_{m-l}}
\bar T\pal_{\bar \gamma_{l+1} } (e_{\iota, \tw(\iota)})_{\iota',{\tw}'}   H
\Big(  T\pal_{\gamma_{1} }\circ\cdots\circ 
T\pal_{\gamma_{l}}(e_{\iota'})\Big)$$
$$=\sum\limits_{\iota'\in \widehat J}
  T\pal_{ \gamma_{l+1} } (e_{\iota})_{\iota'}   H
\Big(  T\pal_{\gamma_{1} }\circ\cdots\circ 
T\pal_{\gamma_{l}}(e_{\iota'})\Big)\eqno (\text{by\  (4.9)})$$
$$=H\Big(T\pal_ { \gamma_{1} }\circ\cdots\circ 
 T\pal_{\gamma_{l+1}}(e_{\iota})\Big) \eqno (\text{by\  i)})
$$

and (4.8) holds for $l+1$, and the inductive step is completed.
Now, iv) follows from the case $l=m$.
\end{proof}

\begin{cor}\label{4.4}
$\alpha$ is $(T,\Gamma)$-u.p. if and only if it is $(\bar T,\bar \Gamma)$-u.p.
\end{cor}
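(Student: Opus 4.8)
The plan is to obtain both implications as immediate consequences of parts iii) and iv) of Lemma 4.3, so that essentially no new work is needed beyond that lemma; in each direction the constant $c_{3,\alpha}$ will pass through unchanged.

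First I would show that $(\bar T,\bar\Gamma)$-u.p.\ implies $(T,\Gamma)$-u.p. Assume (4.5$'$). Given arbitrary $\gamma_1,\ldots,\gamma_m\in\Gamma$ and $\iota\in\widehat J$, I would apply Lemma 4.3 iii) with the choice $\tw=\empti$; since $e_\iota=e_{\iota,\empti}$ this gives
$$H\big(T\pal_{\gamma_1}\circ\cdots\circ T\pal_{\gamma_m}(e_\iota)\big)=\bar H\big(\bar T\pal_{I(\gamma_1)}\circ\cdots\circ \bar T\pal_{I(\gamma_m)}(e_\iota)\big).$$
As each $I(\gamma_l)$ lies in $\bar\Gamma$, hypothesis (4.5$'$) bounds the right-hand side below by $c_{3,\alpha}$, which is exactly (4.5).

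Next I would show the converse. Assume (4.5). Given arbitrary $\bar\gamma_1,\ldots,\bar\gamma_m\in\bar\Gamma$, Lemma 4.3 iv) supplies $\gamma_1,\ldots,\gamma_m\in\Gamma$ for which (4.6) holds, namely
$$H\big(T\pal_{\gamma_1}\circ\cdots\circ T\pal_{\gamma_m}(e_\iota)\big)\le\bar H\big(\bar T\pal_{\bar\gamma_1}\circ\cdots\circ \bar T\pal_{\bar\gamma_m}(e_\iota)\big)$$
for every $\iota\in\widehat J$. Hypothesis (4.5) bounds the left-hand side below by $c_{3,\alpha}$, so the same lower bound holds for the right-hand side, which is precisely (4.5$'$).

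The only subtlety worth flagging is that both u.p.\ definitions are stated for the base vector $e_\iota=e_{\iota,\empti}\in\bar Z_0$, so part iii) must be invoked at the empty word and part iv) is already aligned to this vector. Beyond this there is no real obstacle remaining at this stage: the genuine difficulty --- relating the finite-dimensional operators $T\pal_\gamma$ on $\bre^{\widehat J}$ to the infinite-dimensional operators $\bar T\pal_{\bar\gamma}$ on $\bar Z$, and in particular constructing in part iv) the minimizing words $\tw(\iota)$ that realize inequality (4.6) --- was the content of Lemma 4.3 and is not revisited here.
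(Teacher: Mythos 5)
Your proof is correct and is exactly the paper's argument: the paper's proof of Corollary 4.4 is simply "This immediately follows from Lemma 4.3 iii) and iv)," and your two directions (iii) at $\tw=\empti$ with the embeddings $I(\gamma_l)\in\bar\Gamma$ for one implication, and iv) with the constant $c_{3,\alpha}$ passing through unchanged for the converse) are precisely the intended elaboration of that sentence.
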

\begin{proof} This immediately follows from Lemma 4.3 iii) and iv).
\end{proof}

\section{Distances on the fractal}
 
 In this Section, we  prove that for a  polyratio $\balpha$
 the following are equivalent
 
 \qua i) $\balpha$ is a metric poluyratio
 
 \qua ii) $\balpha$ ia an as. metric polyratio
 
 \qua   iii) $\balpha$ is $(T,\Gamma)$-u.p.

 In the sequel we will always use
 implicitly  Corollary 4.4. First, we prove that both i) and ii) imply iii).

\begin{lem}\label{5.1} 
If  $\alpha$ is either a metric polyratio or an as. metric polyratio
 on $K$, then  $\alpha$ is $(T,\Gamma)$-u.p.
 \end{lem}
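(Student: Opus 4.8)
The goal is to show that if $\balpha$ admits either a self-similar distance or a scaling distance, then the operators $T\pal_\gamma$ cannot shrink mass too much along products. The plan is to fix a distance $d$ of the relevant type and to reinterpret the quantity $H\big(T\pal_{\gamma_1}\circ\cdots\circ T\pal_{\gamma_n}(e_\iota)\big)$ as (up to normalization) a sum of the form $\Sigma_\Pi(\alw)$ for a concrete path $\Pi$ obtained by iterated insertion. Indeed, by Lemma 4.2 ii), choosing the path $P_\iota$ as the starting point and using (4.1), we have
$$
H\big(T\pal_{\gamma_1}\circ\cdots\circ T\pal_{\gamma_n}(e_\iota)\big)
= H\big(\widehat\Sigma_{D(\gamma_1)\circ\cdots\circ D(\gamma_n)(P_\iota)}(\alw)\big)
= \Sigma_{\Pi}(\alw),
$$
where $\Pi:=D(\gamma_1)\circ\cdots\circ D(\gamma_n)(P_\iota)$ is an honest $\von$-path connecting $P_{j_1}$ to $P_{j_2}$ (here $\iota=(j_1,j_2)$), with the same endpoints as $P_\iota$ by the end-point preservation noted after the definition of $\bar D(\bar\gamma)$. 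So it suffices to bound $\Sigma_\Pi(\alw)$ from below by a positive constant independent of $n$, $\gamma_1,\dots,\gamma_n$, and $\iota$.

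**The metric-distance case.** First I would treat the case where $d$ is an $\balpha$-self-similar distance. The key is the relation (4.2), $\alw(Q_{h-1,\Pi},Q_{h,\Pi})=\alpha_{\witi w(h,\Pi)}$, together with the fact that each $\psi_i$ is $\alpha_i$-Lipschitz. From $\alpha_i$-Lipschitzness one gets inductively that $\psi_{\tw}$ is $\alpha_{\tw}$-Lipschitz, hence for each edge of $\Pi$,
$$
d(Q_{h-1,\Pi},Q_{h,\Pi})=d\big(\psi_{\witi w(h,\Pi)}(P_{\witi\iota(h,\Pi)})\big)
\le \alpha_{\witi w(h,\Pi)}\,\diam_d(K).
$$
Summing and using the triangle inequality along $\Pi$ (which connects $P_{j_1}$ to $P_{j_2}$) gives
$$
d(P_{j_1},P_{j_2})\le \sum_{h=1}^{n(\Pi)} d(Q_{h-1,\Pi},Q_{h,\Pi})
\le \diam_d(K)\sum_{h=1}^{n(\Pi)}\alpha_{\witi w(h,\Pi)}
=\diam_d(K)\,\Sigma_\Pi(\alw).
$$
Since $\widehat J$ is finite and $P_{j_1}\ne P_{j_2}$ by (2.8), the minimum $\delta:=\min_{\iota}d(P_{j_1},P_{j_2})>0$ is a positive constant, and we obtain $\Sigma_\Pi(\alw)\ge \delta/\diam_d(K)=:c_{3,\alpha}>0$, which is exactly (4.5).

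**The scaling-distance case and the main obstacle.** For an $\alpha$-scaling distance the same triangle-inequality argument gives a lower bound on $\Sigma_\Pi(\alw)$, but now the individual edge lengths are only controlled \emph{asymptotically}: the copy-diameter bound (1.3) compares $\diam_d(K_{\tw})$ with $\alpha_{\tw}$, whereas an edge length $d(Q_{h-1,\Pi},Q_{h,\Pi})$ is a distance between two specific boundary points of a copy, not the diameter of the copy. The cleaner route, which I expect to be the real work, is to bound the \emph{diameter} $\diam_d(K_\iota)$ of the copy containing the endpoints rather than the endpoint distance itself: using the upper estimate in (1.3) for each inserted sub-copy and the lower estimate for the whole copy $\psi_{\witi w}(K)$, one controls the edge contributions by $\diam_d$ of the corresponding copies, so that $\Sigma_\Pi(\alw)$ is bounded below by the ratio $c_{1,\alpha}/c_{2,\alpha}$ times a fixed geometric constant. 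The delicate point is the bookkeeping that connects the per-edge scaling factor $\alpha_{\witi w(h,\Pi)}$ (entering $\alw$) to the diameter of the $|\witi w(h,\Pi)|$-copy it lives in, and checking that the constants $c_{1,\alpha},c_{2,\alpha}$ enter uniformly, independent of the length $n$ of the product. Once this uniform lower bound for $\Sigma_\Pi(\alw)$ is in hand, (4.5) follows, and by Corollary 4.4 one gets $(\bar T,\bar\Gamma)$-u.p.\ for free; I would phrase the final bound directly in terms of $\Sigma_\Pi(\alw)$ so that both cases are unified under the single inequality $\Sigma_\Pi(\alw)\ge c_{3,\alpha}$.
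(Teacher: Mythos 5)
Your reduction via Lemma 4.2 ii) and (4.1) — rewriting $H\big(T\pal_{\gamma_1}\circ\cdots\circ T\pal_{\gamma_n}(e_\iota)\big)$ as $\Sigma_{\Pi}(\alw)$ for the inserted path $\Pi=D(\gamma_1)\circ\cdots\circ D(\gamma_n)(P_\iota)$ — and your treatment of the metric case are correct, and coincide in substance with the paper's proof (the paper runs the same computation in $\bar\Gamma$ and then invokes Corollary 4.4; that difference is cosmetic). The genuine gap is the scaling case, which you leave as a sketch organized around an ``obstacle'' that does not exist. You worry that (1.3) controls $\diam_d(K_{\tw})$ rather than the distance between the two specific endpoints of an edge; but the argument only needs an \emph{upper} bound on each edge length, and since both endpoints $Q_{h-1,\Pi}=\psi_{\witi w(h,\Pi)}(P_{j_1})$ and $Q_{h,\Pi}=\psi_{\witi w(h,\Pi)}(P_{j_2})$ lie in the copy $K_{\witi w(h,\Pi)}$, one has trivially
$$d(Q_{h-1,\Pi},Q_{h,\Pi})\le \diam_d K_{\witi w(h,\Pi)}\le c_{2,\alpha}\,\alpha_{\witi w(h,\Pi)}\,\diam_d(K),$$
using only the second inequality in (1.3). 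This is precisely the per-edge bound you established in the metric case (there with $c_{2,\alpha}=1$, via Lipschitzness), so your metric argument carries over verbatim: the triangle inequality, summation over edges, and the positive minimum $\delta=\min\big\{d(P_{h_1},P_{h_2}):(h_1,h_2)\in\widehat J\big\}$ yield $\Sigma_{\Pi}(\alw)\ge \delta/\big(c_{2,\alpha}\diam_d(K)\big)$, i.e.\ (4.5) with $c_{3,\alpha}=\delta/\big(c_{2,\alpha}\diam_d(K)\big)$.

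Consequently, the machinery you propose for the ``real work'' — invoking the lower estimate $c_{1,\alpha}$ of (1.3) and producing a bound involving $c_{1,\alpha}/c_{2,\alpha}$ — is misdirected: the constant $c_{1,\alpha}$ plays no role anywhere in this lemma. This is exactly how the paper unifies the two hypotheses: it observes at the outset that whether $\balpha$ is metric or as.\ metric, there is a distance $d$ satisfying the second inequality in (1.3) for some $c_{2,\alpha}>0$, and then runs a single argument. Your proof becomes complete (and identical in approach to the paper's) once the scaling-case paragraph is replaced by this one-line observation.
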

\begin{proof}
If  $\alpha$ is either a metric polyratio or an as. metric polyratio,
then there exists a distance $d$ on $K$ such that
for some $c_{2,\alpha}>0$ the second inequality in (1.3) holds.
Observe that for every path $\Pi$ connecting $P_{j_1}$ to $P_{j_2}$
for every $h=1,...,n(\Pi)$  we have 

$$d(P_{j_1},P_{j_2})\le\sum\limits_{h=1}^{n(\Pi)}
d(Q_{h-1,\Pi},Q_{h,\Pi})=\Sigma_{\Pi}(d).\eqno (5.1)$$

Moreover, for some $j_1, j_2$ (depending on $h$) we have
$$d(Q_{h-1,\Pi},Q_{h,\Pi})=
d\big(\psi_{\witi w(h,\Pi)}(P_{ j_1}),\psi_{\witi w(h,\Pi)}(P_{ j_2})\big)$$
$$\le \diam_{  d} K_{\witi w(h,\Pi)}\le 
c_{2,\alpha} \alpha_{\witi w(h,\Pi)} \diam_{d}(K)$$
$$=c_{2,\alpha} \diam_{ d}(K)\, \alw(Q_{h-1,\Pi},Q_{h,\Pi}),
$$

for every $h=1,..,n(\Pi)$, thus, summing the previous inequalities, we obtain

$$\Sigma_{ \Pi}(d)=\sum\limits_{h=1}^{n(\Pi)}d(Q_{h-1,\Pi},Q_{h,\Pi})$$
$$\le c_{2,\alpha} \diam_{ d}(K)\sum\limits_{h=1}^{n(\Pi)}\alw(Q_{h-1,\Pi},Q_{h,\Pi})
= c_{2,\alpha} \diam_{ d}(K) \Sigma_{ \Pi}(\alw),$$

and, in view of (5.1),

$$d(P_{j_1},P_{j_2})\le c_{2,\alpha} \diam_{ d}(K) \Sigma_{ \Pi}(\alw).
\eqno (5.2)$$

Let now $\bar\gamma_1,...,\bar\gamma_l\in\bar\Gamma$ 
and let $\{j_1,j_2\}\in \widehat J$.  Set $\Pi:=(P_{j_1},P_{j_2})$, so that 
$\witi\Sigma_{\Pi} (\alw)=e_{\{j_1,j_2\}}$ (see 4.1). Recall
that hence also the path $\bar D(\bar\gamma_1)\circ\cdots\circ \bar
D(\bar\gamma_l)(\Pi)$
connects $P_{j_1}$ to $P_{j_2}$, so that we can use (5.2) with this path in place of
$\Pi$, and, in view also of Lemma 4.2 i), we have

$$\min\big \{ d\big(P_{h_1}, P_{h_2}\big):(h_1,h_2)
\in \widehat J\big\} \le d\big(P_{j_1}, P_{j_2}\big)\le $$
$$c_{2,\alpha} \diam_d(K)
\Sigma_{\bar D(\bar\gamma_1)\circ\cdots\circ \bar D(\bar\gamma_l)(\Pi)}(\alw)=$$
$$c_{2,\alpha} \diam_d(K)\bar H\Big( \bar T\pal_{\bar\gamma_1}
\circ\cdots\circ \bar T\pal_{\bar\gamma_l}
(e_{\{j_1,j_2\} })\Big).$$

Hence, (4.5)$'$ is satisfied with

$$c_{3,\alpha}={\min\big \{ d\big(P_{h_1}, P_{h_2}\big):(h_1,h_2)
\in \widehat J\big\}
\over c_{2,\alpha} \diam_d(K)}.$$
 
\end{proof}

Recall the definition of the pseudodistance given in \cite{4}.
If $Q, Q'\in K$ and
 $\tw_1,...,\tw_n\in W^*$,
we say that $(\tw_1,...,\tw_n)$ is a  prechain if
$K_{\tw_h}\cap K_{\tw_{h+1}}\ne\empti$ for every 
$h=1,...,n-1$. We say that   $(\tw_1,...,\tw_n)$ is  between $Q$ and
$Q'$ if $Q \in K_{\tw_1}$ and $Q'\in  K_{\tw_n}$.
We say that a prechain $(\tw_1,...,\tw_n)$
is a  chain if $\tw_1$,...,$\tw_n$ are pairwise incomparable.
Denote by $G'(Q, Q')$ the
set of   prechains  between $Q$ and $Q'$ and
by $G(Q, Q')$ the
set of   chains  between $Q$ and $Q'$. 
 Let $\balpha = (\alpha_1,...,\alpha_k)$ be a polyratio
 and let

$$A(\cC) = \sum\limits_{h=1}^n \alpha_{\tw_h},\quad \text{\rm if}\
\cC\in G'(Q, Q'),$$
$$\Dal(Q,Q'):=\inf\big\{A(\cC): \cC\in G(Q,Q')\big\}
=\inf\big\{A(\cC): \cC\in G'(Q,Q')\big\}.$$

Note that a standard argument shows that $\Dal$ is in any case
a {\it pseudodistance}, in the sense that satisfies
all properties of a  distance, except possibly for the fact
that $\Dal(Q,Q')=0$ could occur also when $Q\ne Q'$.

\begin{lem}\label{5.2}

i) For every polyratio $\balpha$, $D_{\alpha}$ is a pseudodistance, and, 
if $\Dal(Q,Q')>0$, for every $Q,Q'\in\vn{\infty}$ with $Q\ne Q'$, $\Dal$ is
 an $\balpha$-self-similar distance.

ii) If $\balpha$ is a metric polyratio, then $\Dal$
is an $\balpha$-self-similar distance, which induces the same topology
as the original distance.

\end{lem}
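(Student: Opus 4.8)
The plan is to isolate the two routine ingredients and then focus all the effort on the single delicate point: upgrading positivity of $\Dal$ from $\vn{\infty}$ to all of $K$.

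\emph{Routine parts.} That $\Dal$ is a pseudodistance is the standard verification already noted before the statement (symmetry is built in; $\Dal(Q,Q)=0$ because a one-word chain $(v)$ with $Q\in K_v$ has weight $\alpha_v\le\bar\alpha_{max}^{|v|}\to 0$; and the triangle inequality follows by concatenating a chain from $Q$ to $Q'$ with one from $Q'$ to $Q''$, since the shared point $Q'$ lies in both the last cell of the first and the first cell of the second, so the juxtaposition is again a prechain with additive weight). For the self-similarity I would note that if $(\tw_1,\dots,\tw_n)$ is a prechain between $Q$ and $Q'$, then $(i\tw_1,\dots,i\tw_n)$ is a prechain between $\psi_i(Q)$ and $\psi_i(Q')$: the endpoints sit in $K_{i\tw_1}$ and $K_{i\tw_n}$, and $K_{i\tw_h}\cap K_{i\tw_{h+1}}=\psi_i\big(K_{\tw_h}\cap K_{\tw_{h+1}}\big)\ne\emptyset$ by injectivity of $\psi_i$. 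Since its weight is $\alpha_i$ times the original, taking the infimum gives $\Dal(\psi_i(Q),\psi_i(Q'))\le\alpha_i\,\Dal(Q,Q')$; thus once $\Dal$ is a genuine distance it is automatically $\balpha$-self-similar.

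\emph{Positivity (the crux).} Fix distinct $Q,Q'$; if both lie in $\vn{\infty}$ we are done by hypothesis, so I assume $Q\notin\vn{\infty}$. Then for each $m$ there is a unique $m$-cell containing $Q$ (distinct $m$-cells meet only inside $\vn{\infty}$ by (2.7)), these cells $K_{u_m}$ are nested, and $\bigcap_m K_{u_m}=\{Q\}$ by (2.3). The engine is a crossing remark: if a chain has all words of length $\ge m_0$ and joins a point of $K_{u_{m_0}}$ to a point outside it, then the $m_0$-labels $(\tw_h)_{(m_0)}$ start at $u_{m_0}$ and must change, and at the first change, to a label $v\ne u_{m_0}$, the gluing point joining the two cells lies in $K_{u_{m_0}}\cap K_v=V_{u_{m_0}}\cap V_v\subseteq V_{u_{m_0}}\subseteq\vn{\infty}$ by (2.7). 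Restricting to long words is free: once $A(\cC)<\bar\alpha_{min}^{m_0-1}$ every word must have length $\ge m_0$, since a shorter word $\tw$ already contributes $\alpha_{\tw}\ge\bar\alpha_{min}^{|\tw|}\ge\bar\alpha_{min}^{m_0-1}$.

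Now suppose for contradiction $\Dal(Q,Q')=0$ and pick $m_0$ with $Q'\notin K_{u_{m_0}}$ (possible as $\bigcap_m K_{u_m}=\{Q\}\not\ni Q'$). Taking chains $\cC_s$ with $A(\cC_s)\to0$, for large $s$ all their words are long, so the crossing remark produces a gluing point $R_s\in V_{u_{m_0}}$ reached by a sub-chain from $Q$ of weight $\le A(\cC_s)$. As $V_{u_{m_0}}$ is finite, a subsequence has $R_s$ equal to a fixed $R^{*}\in\vn{\infty}$, giving $\Dal(Q,R^{*})=0$ with $R^{*}\ne Q$. I then repeat the move on the pair $(Q,R^{*})$: choosing $m_1$ with $R^{*}\notin K_{u_{m_1}}$ and running the same crossing-plus-subsequence argument yields a vertex $R'\in V_{u_{m_1}}\subseteq\vn{\infty}$ with $\Dal(R',R^{*})=0$. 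The hypothesis, applied to the two $\vn{\infty}$-points $R'$ and $R^{*}$, forces $R'=R^{*}$; but $R'\in V_{u_{m_1}}\subseteq K_{u_{m_1}}$ whereas $R^{*}\notin K_{u_{m_1}}$ --- a contradiction. Hence $\Dal(Q,Q')>0$, which together with the Lipschitz property proves (i). I expect the bookkeeping of this two-step crossing argument --- checking that the gluing points land in $\vn{\infty}$ and that finiteness of the vertex sets licenses the constant subsequence --- to be the only real obstacle.

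\emph{Part (ii).} Let $d$ be an $\balpha$-self-similar distance. For any chain $(\tw_1,\dots,\tw_n)$ between $Q$ and $Q'$ each cell has $\diam_d(K_{\tw_h})=\diam_d\big(\psi_{\tw_h}(K)\big)\le\alpha_{\tw_h}\diam_d(K)$, and walking through the nonempty consecutive intersections gives $d(Q,Q')\le\sum_h\diam_d(K_{\tw_h})\le\diam_d(K)\,A(\cC)$; the infimum yields $d(Q,Q')\le\diam_d(K)\,\Dal(Q,Q')$. Since $d$ is a genuine distance this makes $\Dal$ positive off the diagonal, in particular on $\vn{\infty}$, so (i) gives that $\Dal$ is an $\balpha$-self-similar distance. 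For the topology I would argue one-sidedly: fixing a level $m$ and a point $Q$, the union of the $m$-cells not containing $Q$ is closed and misses $Q$, so any $Q_s\to Q$ in $\bar d$ eventually shares an $m$-cell $K_v$ with $Q$, whence $\Dal(Q_s,Q)\le\alpha_v\le\bar\alpha_{max}^m$; letting $m\to\infty$ shows $\Dal(Q_s,Q)\to0$, i.e. the identity $(K,\bar d)\to(K,\Dal)$ is continuous. Being a continuous bijection from the compact space $(K,\bar d)$ onto the Hausdorff space $(K,\Dal)$, it is a homeomorphism, so $\Dal$ induces the original topology.
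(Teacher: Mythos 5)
Your proof takes a genuinely different route from the paper's: the paper disposes of this lemma entirely by citation, deferring part (i) to Theorem 1.33 and Prop.\ 1.12 of \cite{4} and part (ii) to Prop.\ 1.13 and 1.11 of \cite{4}, whereas you reconstruct everything inside the present framework. Your part (i) is correct and is a sound self-contained substitute for the citation: the routine verifications (one-word chains of weight $\le\bar\alpha_{max}^{m}$ for $\Dal(Q,Q)=0$, concatenation of chains giving a prechain for the triangle inequality, the shifted prechain $(i\tw_1,\dots,i\tw_n)$ for the Lipschitz bound) are fine, and the crux --- uniqueness and nesting of the copies $K_{u_m}$ containing $Q\notin\vn{\infty}$, the remark that chains of weight below $\bar\alpha_{min}^{m_0-1}$ use only words of length $\ge m_0$, the extraction via (2.7) of a gluing point in the finite set $V_{u_{m_0}}$, and the second application of the crossing argument to $(Q,R^{*})$, which plays the hypothesis on $\vn{\infty}$ against $R^{*}\notin K_{u_{m_1}}$ --- all checks out. (What you call $m$-cells are the paper's $m$-copies, but that is only terminology.)

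The one genuine gap is in part (ii), at the step $d(Q,Q')\le\sum_h\diam_d(K_{\tw_h})\le\diam_d(K)\,A(\cC)$: nothing in the paper's definition guarantees $\diam_d(K)<+\infty$. A metric polyratio only requires \emph{some} distance $d$ making each $\psi_i$ $\alpha_i$-Lipschitz; $d$ is not assumed to induce the original topology, so compactness of $K$ gives no bound on $d$. This can genuinely fail: on $K=[0,1]$ with $\psi_1(x)=x/2$, $\psi_2(x)=(x+1)/2$ (a fractal of the paper's class, $N=k=2$) and $\balpha=(1/2,1/2)$, pick a discontinuous $\bra$-linear map $\phi:\bre\to\bre$ and set $d(x,y)=|x-y|+|\phi(x)-\phi(y)|$. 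Both $\psi_i$ are exactly $\frac12$-Lipschitz for $d$, since $\phi(x/2)=\phi(x)/2$ and $\phi\big((x+1)/2\big)=\phi(x)/2+\phi(1)/2$; yet an additive discontinuous $\phi$ is unbounded on every interval, so $\diam_d(K)=+\infty$ and your inequality is vacuous. Since your reduction of (ii) to (i) hinges on that inequality yielding positivity of $\Dal$, part (ii) is not proved as written. (Under Kameyama's own hypothesis that $d$ induces the original topology, $(K,d)$ would be compact and your argument would be complete; but the paper explicitly remarks, right after its proof, that this hypothesis is \emph{not} part of the present definition.)

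The repair stays within your scheme but must use the finitely ramified structure instead of $\diam_d(K)$: it suffices to get $\Dal>0$ on $\vn{\infty}$ and then invoke your part (i). Fix distinct $Q,Q'\in\vn{m}$ and a chain $\cC=(\tw_1,\dots,\tw_n)$ with $A(\cC)<\bar\alpha_{min}^{m}$, so every $|\tw_h|>m$. Since the words of a chain are pairwise incomparable, Lemma 2.2 gives $K_{\tw_h}\cap K_{\tw_{h+1}}=V_{\tw_h}\cap V_{\tw_{h+1}}$, so each gluing point lies in $V_{\tw_h}=\psi_{\tw_h}(\vo)$; and by Lemma 2.3 (whose proof shows that any point of $K_{\tw}\cap\vn{m}$ with $|\tw|\ge m$ equals $\psi_{\tw}(P_j)$ for some $j$) the endpoints satisfy $Q\in V_{\tw_1}$, $Q'\in V_{\tw_n}$. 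Hence consecutive points of your walk lie in a common $\psi_{\tw_h}(\vo)$, giving $d(R_{h-1},R_h)\le\alpha_{\tw_h}\diam_d(\vo)$, where $\diam_d(\vo)<\infty$ simply because $\vo$ is finite. Summing, $d(Q,Q')\le\diam_d(\vo)\,A(\cC)$, whence $\Dal(Q,Q')\ge\min\bigl\{\bar\alpha_{min}^{m},\,d(Q,Q')/\diam_d(\vo)\bigr\}>0$. With this substitute, the rest of your part (ii), including the continuous-bijection argument for the topology statement (which is correct and unaffected by this issue), goes through.
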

\begin{proof}
i) This follows from  Theorem 1.33 and Prop. 1.12 in \cite{4}.
ii) See Prop. 1.13 and 1.11 in \cite{4}. Note that, according to the definitions
in \cite{4}, which are equivalent to the definitions here, but slightly differ
from them, in the hypothesis of Prop. 1.13 of \cite{4} it is assumed 
that $d$ induces the same topology on $K$ as the original distance,
but the proof does not use this fact.
\end{proof}

Now, we are going to prove the converse of Lemma 5.1, i.e.,
if $\alpha$ is $(T,\Gamma)$-u.p., then 
$\Dal$ is at the same time an 
$\alpha$-self-similar distance and an $\alpha$-scaling distance on
$K$. 
In order to do this, we will introduce a pseudodistance $\wdal$ 
on $\vn{\infty}$ that is more strictly related to the notion of 
$(T,\Gamma)$-u.p. than $\Dal$. However, in Lemma 5.6 we will prove
that $\Dal$ and $\wdal$ are equivalent on $\vn{\infty}$.
A similar distance in discussed in \cite{4}, Theorem 1.34.
Clearly, for every path $\Pi$ and for every $\tw\in W^*$ we have

$$\Sigma_{\psi_{\tw}(\Pi)}(\alw) =\alpha_{\tw}  \Sigma_{\Pi}(\alw). \eqno (5.3)
$$

Let $\path'_{Q,Q'}$ be the set of the weak paths connecting
$Q$ to $Q'$, let
$\path_{Q,Q'}$ be the set of the paths connecting
$Q$ to $Q'$, and let $\tpath_{Q,Q'}$ 
be the set of the strict paths connecting $Q$ to $Q'$.
We define now the function  $\wdal$ on $\vn{\infty}$ by

$$\wdal(Q,Q')=\inf\big\{\Sigma_{\Pi}(\alw): \Pi\in\path_{Q,Q'}\big\}=.$$
$$\inf\big\{\Sigma_{\Pi}(\alw): \Pi\in\tpath_{Q,Q'}\big\}=
\inf\big\{\Sigma_{\Pi}(\alw): \Pi\in\path'_{Q,Q'}\big\}.$$
It can be easily proved that $\wdal$ is a pseudodistance on $\vn{\infty}$. 
The only nontrivial point for proving this, is that 
for every $Q,Q'\in \vn{\infty}$ we have $\path_{Q,Q'}\ne\empti$, so that
$\wdal(Q,Q')<+\infty$. This will follow from the next lemma.

\begin{lem}\label{5.3}
There exists a  constant $C_{1,\alpha}\ge 1$ such that 

$$ \diam_{\wdal}\big(\vn {\infty}_{\tw}\big)\le C_{1,\alpha} \alpha_{\tw}$$

 for every  $\tw\in W^*$.
 Thus, in particular, $\diam_{\wdal}\big(\vn {\infty}_{\tw}\big)$ is finite,
  for every  $ \tw\in W^*$.
\end{lem}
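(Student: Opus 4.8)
The plan is to prove the bound first for $\tw=\empti$, i.e.\ to show $\diam_{\wdal}(\vn{\infty})<+\infty$, and then to pass to a general $\tw$ by the scaling identity (5.3). For the scaling step, given $\tw\in W^*$ and $Q,Q'\in\vn{\infty}_{\tw}=\psi_{\tw}(\vn{\infty})$, I would write $Q=\psi_{\tw}(\hat Q)$, $Q'=\psi_{\tw}(\hat Q')$ with $\hat Q,\hat Q'\in\vn{\infty}$; by Lemma 3.2 ii) every path $\hat\Pi$ connecting $\hat Q$ to $\hat Q'$ gives a path $\psi_{\tw}(\hat\Pi)$ connecting $Q$ to $Q'$, and (5.3) yields $\Sigma_{\psi_{\tw}(\hat\Pi)}(\alw)=\alpha_{\tw}\Sigma_{\hat\Pi}(\alw)$. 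Taking the infimum over $\hat\Pi$ gives $\wdal(Q,Q')\le\alpha_{\tw}\,\wdal(\hat Q,\hat Q')\le\alpha_{\tw}\diam_{\wdal}(\vn{\infty})$, hence $\diam_{\wdal}(\vn{\infty}_{\tw})\le\alpha_{\tw}\diam_{\wdal}(\vn{\infty})$. It then suffices to take $C_{1,\alpha}=\max\{1,\diam_{\wdal}(\vn{\infty})\}$.

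The finiteness at level $\empti$ rests on connectivity together with the fact that $\von=\bigcup_{i=1}^k\psi_i(\vo)$ is a \emph{finite} set (at most $kN$ points). Because of this there are only finitely many strict $\von$-paths, and by the connectivity hypothesis any two points of $\von$ are joined by one (removing repetitions, which does not increase $\Sigma_{\Pi}(\alw)$ since $\alw\ge 0$). Hence
$$C_0:=\max\Big\{\min\big\{\Sigma_{\Pi}(\alw):\Pi\ \text{a strict $\von$-path from $R$ to $S$}\big\}:R,S\in\von\Big\}$$
is a finite number, and in particular $\diam_{\wdal}(\von)\le C_0$.

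The heart of the argument is a level-reduction step with geometrically decaying cost. If $Q\in\vn{l}$ with $l\ge 2$, I write $Q=\psi_{i_1,...,i_l}(P_j)=\psi_{i_1,...,i_{l-1}}(R)$ where $R=\psi_{i_l}(P_j)\in V_{i_l}\subseteq\von$, choose a $\von$-path $\Pi$ from $R$ to some $S\in\vo$ with $\Sigma_{\Pi}(\alw)\le C_0$, and observe (again by Lemma 3.2 ii) and (5.3)) that $\psi_{i_1,...,i_{l-1}}(\Pi)$ is a path joining $Q$ to $\psi_{i_1,...,i_{l-1}}(S)\in V_{i_1,...,i_{l-1}}\subseteq\vn{l-1}$ of $\alw$-sum $\alpha_{i_1,...,i_{l-1}}\Sigma_{\Pi}(\alw)\le\bar\alpha_{max}^{\,l-1}C_0$. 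Iterating from level $m$ down to level $1$ and composing the resulting pieces (by construction the endpoint of each reduction path is the starting point of the next, so the composition $\circ$ is legitimate and $\Sigma_{\Pi}(\alw)$ is additive), every $Q\in\vn{m}$ is joined to a point of $\von$ by a path of $\alw$-sum at most $C_0\sum_{j\ge 1}\bar\alpha_{max}^{\,j}=C_0\,\bar\alpha_{max}/(1-\bar\alpha_{max})=:C'$. Splicing two such reductions together with a $\von$-path between their endpoints shows, for arbitrary $Q,Q'\in\vn{\infty}$, that $\path_{Q,Q'}\ne\empti$ and $\wdal(Q,Q')\le 2C'+C_0$, so $\diam_{\wdal}(\vn{\infty})\le 2C'+C_0<+\infty$. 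The main technical care lies exactly in this reduction step: verifying that the rescaled $\von$-path on each small copy scales correctly under (5.3) and that the successive pieces genuinely compose into one path, so that additivity of $\Sigma_{\Pi}(\alw)$ converts the geometric factors $\bar\alpha_{max}^{\,l-1}$ into a convergent, $\tw$-independent bound; the finiteness of $C_0$ and the final scaling reduction are then routine.
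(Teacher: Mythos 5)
Your proof is correct and rests on the same ingredients as the paper's: a uniform connectivity constant for $\von$-paths, a geometric-series descent through the nested cells, and the scaling identity (5.3) combined with Lemma 3.2 ii). The only difference is organizational — you prove the case $\tw=\empti$ first and then rescale the whole diameter by $\alpha_{\tw}$, whereas the paper first scales the connectivity bound (its (5.4)) and runs the descent directly inside each $K_{\tw}$ along the radial path $\psi_{i_1,\dots,i_h}(P)$ with a fixed $P\in\vo$ — so the two arguments are essentially the same.
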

\begin{proof}
Since the fractal is connected,
there exists $C_{\alpha}\ge 1$ such that for every $\widehat Q, \widehat Q'\in \von$
there exists a path $\Pi$ connecting them such that 
$\Sigma_{\Pi}(\alw)\le C_{\alpha}$. 
Thus, for every $\tw\in W^*$, in view of (5.3), the path $\Pi':=\psi_{\tw}(\Pi)$ satisfies
$\Sigma_{\Pi'}(\alw)\le C_{\alpha} \alpha_{\tw}$. Since $\Pi'$ connects 
$\psi_{\tw}(\widehat Q)$
to $\psi_{\tw}(\widehat Q')$, we then have

$$\wdal\big(\psi_{\tw}(\widehat Q),\psi_{\tw}
(\widehat Q')\big)\le C_{\alpha} \alpha_{\tw}.\eqno (5.4)$$
%
 %
 
 Now, suppose $\tw=(i_1...i_m)$. If $Q\in\vn{\infty}_{\tw}$, there exist
 $i_{m+1},...,i_{m'}$ and $P\in\vo$ such that
 $Q=\psi_{i_1,...,i_m, i_{m+1},...,i_{m'}}(P)$.
 Let $\Pi= (Q_m,...,Q_{m'})$, where $Q_h=
 \psi_{i_1,...,i_m,...,i_{h}}(P)$. Then $\Pi$ is a path that connects
 $\witi Q:=\psi_{i_1,...,i_m}(P)\in V_{i_1,...,i_m}$ to $Q$.
 Therefore,
 
 $$\wdal(Q,\witi Q)\le \sum\limits_{h=m+1}^{m'}
 \wdal(Q_{h-1},Q_h)$$
$$ =\sum\limits_{h=m+1}^{m'}\wdal\Big(
\psi_{i_1,...,i_m,...,i_{h-1}}(P), \psi_{i_1,...,i_m,...,i_{h-1}}\big( \psi_{i_h}(P)\big)\Big)$$
$$\le C_{\alpha} \sum\limits_{h=m+1}^{m'}\alpha_{i_1,...,i_m,...,i_{h-1}}
\eqno\text{\text \ (by\ (5.4))}$$
$$=C _{\alpha}\alpha_{\tw} \sum\limits_{h=m+1}^{m'}
\alpha_{i_{m+1}}\cdots\alpha_{i_{h-1}}$$
$$\le  C_{\alpha} \alpha_{\tw} \sum\limits_{h=m+1}^{m'}
\bar\alpha_{max}^{h-m-1}$$
$$\le { C_{\alpha}   \over 1-\bar\alpha_{max}} \,  \alpha_{\tw}
 $$
   
     Now, given $Q,Q'\in \vn{\infty}_{\tw}$, let $\witi Q$, $\witi{Q'}$ 
     be as above. Then, we have
  
  $$\wdal(Q,Q')\le \wdal(Q,\witi Q)+\wdal(\witi Q,\witi {Q'})+
  \wdal(\witi {Q'},Q')$$
  $$\le \Big(C+2 {  C_{\alpha} 
   \over 1-\bar\alpha_{max}}\Big) \alpha_{\tw}, $$
   
   thus the Lemma is proved with $C_{1,\alpha}=C_{\alpha}+2 {  C_{\alpha} 
   \over 1-\bar\alpha_{max}}$.
 \end{proof}

\begin{lem}\label{5.4}
If $\alpha$ is $(T,\Gamma)$ u.p, and $\Pi$ is a  strict path connecting two 
different points $Q$ and $Q'$ of $\vn m$, then
$$\Sigma_{\Pi}(\alw)\ge c_{3,\alpha} \big(\bar \alpha_{min}\big)^m\eqno (5.5)$$
and consequently, $\wdal$ is a distance on $\vn{\infty}$.
\end{lem}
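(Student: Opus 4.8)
The plan is to reduce the given path $\Pi$ to a ``base'' path lying in $\vn m$ via Lemma 3.6, to rewrite $\Sigma_\Pi(\alw)$ as an iterated application of the operators $\bar T\pal_{\bar\gamma}$ via Lemma 4.2, and then to extract a lower bound from a single edge of the base path together with $(\bar T,\bar\Gamma)$-uniform positivity, which is available from the hypothesis through Corollary 4.4.

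First I would apply Lemma 3.6 to the strict path $\Pi$ joining the distinct points $Q,Q'\in\vn m$: this produces a strict path $\Pi_0$ lying in $\vn m$ and connecting $Q$ to $Q'$, together with $\bar\gamma_1,\dots,\bar\gamma_r\in\bar\Gamma$ with $\Pi=\bar D(\bar\gamma_1)\circ\cdots\circ\bar D(\bar\gamma_r)(\Pi_0)$. Since $\Sigma_\Pi(\alw)=\bar H\big(\witi\Sigma_\Pi(\alw)\big)$ and Lemma 4.2 i) gives $\witi\Sigma_\Pi(\alw)=\bar T\pal_{\bar\gamma_1}\circ\cdots\circ\bar T\pal_{\bar\gamma_r}\big(\witi\Sigma_{\Pi_0}(\alw)\big)$, I obtain
$$\Sigma_\Pi(\alw)=\bar H\Big(\bar T\pal_{\bar\gamma_1}\circ\cdots\circ\bar T\pal_{\bar\gamma_r}\big(\witi\Sigma_{\Pi_0}(\alw)\big)\Big).$$
As $Q\ne Q'$, the path $\Pi_0$ has at least one edge; fix the $s$-th one and set $\iota_0=\witi\iota(s,\Pi_0)$, $\tw_0=\witi w(s,\Pi_0)$, so that $\witi\Sigma_{\Pi_0}(\alw)\ge\alpha_{\tw_0}e_{\iota_0,\tw_0}$ coordinatewise (all summands being nonnegative multiples of basis vectors). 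Since the entries of every $\bar T\pal_{\bar\gamma}$ are nonnegative and $\bar H$ is the sum of the coordinates, monotonicity yields
$$\Sigma_\Pi(\alw)\ge\alpha_{\tw_0}\,\bar H\Big(\bar T\pal_{\bar\gamma_1}\circ\cdots\circ\bar T\pal_{\bar\gamma_r}\big(e_{\iota_0,\tw_0}\big)\Big).$$

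Two things then remain. For the factor $\alpha_{\tw_0}$: the two endpoints of the $s$-th edge are distinct points of $\vn m$ both lying in $K_{\tw_0}$, so if $|\tw_0|>m$ then Lemma 2.3 would force $K_{\tw_0}\cap\vn m$ to have at most one point, a contradiction; hence $|\tw_0|\le m$ and $\alpha_{\tw_0}\ge(\bar\alpha_{min})^{|\tw_0|}\ge(\bar\alpha_{min})^m$. The main obstacle is the $\bar H$-term: property (4.5$'$) bounds $\bar H\big(\bar T\pal_{\bar\gamma_1}\circ\cdots\circ\bar T\pal_{\bar\gamma_r}(e_{\iota_0})\big)$ only for $e_{\iota_0}=e_{\iota_0,\empti}$, while here the second index is the possibly nonempty word $\tw_0$. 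I would overcome this by a shift argument: letting $S_{\tw_0}$ be the linear map $e_{\iota,\tw}\mapsto e_{\iota,\tw_0\tw}$ and $\bar\gamma_j'(\iota,w):=\bar\gamma_j(\iota,\tw_0 w)$ (again in $\bar\Gamma$), one checks on basis vectors the intertwining $\bar T\pal_{\bar\gamma_j}\circ S_{\tw_0}=S_{\tw_0}\circ\bar T\pal_{\bar\gamma_j'}$; pushing $S_{\tw_0}$ through the entire composition and using $\bar H\circ S_{\tw_0}=\bar H$ gives
$$\bar H\Big(\bar T\pal_{\bar\gamma_1}\circ\cdots\circ\bar T\pal_{\bar\gamma_r}\big(e_{\iota_0,\tw_0}\big)\Big)=\bar H\Big(\bar T\pal_{\bar\gamma_1'}\circ\cdots\circ\bar T\pal_{\bar\gamma_r'}\big(e_{\iota_0}\big)\Big)\ge c_{3,\alpha},$$
the last step being (4.5$'$). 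Combining the three displays establishes (5.5).

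For the concluding assertion I would recall that $\wdal$ is already a pseudodistance, so it suffices to exclude $\wdal(Q,Q')=0$ for $Q\ne Q'$ in $\vn\infty$. Choosing $m$ with $Q,Q'\in\vn m$ and applying (5.5) to every strict path connecting them, the infimum defining $\wdal(Q,Q')$, which may be taken over strict paths, is at least $c_{3,\alpha}(\bar\alpha_{min})^m>0$; hence $\wdal$ is a distance on $\vn\infty$.
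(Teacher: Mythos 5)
Your proof is correct and follows essentially the same route as the paper's: decompose $\Pi$ via Lemma 3.6 into a base path in $\vn m$ plus insertions, rewrite $\Sigma_{\Pi}(\alw)$ through Lemma 4.2 i), bound the length of an edge-word of the base path by Lemma 2.3 to get the factor $(\bar\alpha_{min})^m$, keep a single edge by nonnegativity, and invoke $(\bar T,\bar\Gamma)$-uniform positivity. The only difference is your explicit shift-operator argument (the map $S_{\tw_0}$ and the shifted $\bar\gamma_j'$) justifying the application of (4.5$'$) to $e_{\iota_0,\tw_0}$ with $\tw_0\ne\empti$; the paper applies uniform positivity to $e_{\witi\iota(1,\Pi'),\witi w(1,\Pi')}$ without comment, even though (4.5$'$) is stated only for second index $\empti$, so your extra step makes rigorous a point the paper leaves implicit.
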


\begin{proof}
In view of Lemma
3.6, there exist a  strict path $\Pi'$ lying in $\vn m$ connecting
$Q$ and $Q'$ and $\bar\gamma_1,...,\bar\gamma_m\in\bar\Gamma$ such that

$$\Pi=D(\bar\gamma_1)\circ\cdots\circ D(\bar\gamma_k)(\Pi').$$

Note that, by Lemma 2.3, since $\Pi'$ lies in $\vn m$,
then  $|w(Q_{h-1,\Pi'},Q_{h,\Pi'})|$

$\le m$, thus 

$$\alw
(Q_{h-1,\Pi'},Q_{h,\Pi'})\ge  \big( \bar\alpha_{min}\big)^m 
\quad \forall\, h=1,...,n(\Pi'). \eqno(5.6)$$
Therefore, by Lemma 4.2 we have

$$ \Sigma_{\Pi}(\alw)=
\Sigma_{D(\bar\gamma_1)\circ\cdots\circ D(\bar\gamma_m)(\Pi')}(\alw)=$$
$$ \bar H\Big( \bar T\pal_{\bar\gamma_1}\circ\cdots\circ \bar T\pal_{\bar\gamma_m}
 \big(\witi\Sigma_{\Pi'}(\alw)\big)\Big)=$$
 $$=\sum\limits_{h=1}^{n(\Pi')}
\alw(Q_{h-1,\Pi'},Q_{h,\Pi'}) \, \bar H\Big( \bar T\pal_{\bar\gamma_1}
\circ\cdots\circ \bar T\pal_{\bar\gamma_m}
 \big(e_{\witi\iota(h,\Pi'),\witi w(h,\Pi')} )\big)\Big)$$
 $$\ge  \alw(Q_{0,\Pi'},Q_{1,\Pi'}) \bar H\Big( \bar T\pal_{\bar\gamma_1}
\circ\cdots\circ \bar T\pal_{\bar\gamma_m}
 \big(e_{\witi\iota(1,\Pi'),\witi w(1,\Pi')} )\big)\Big)$$
 $$\ge  c_{3,\alpha}\big( \bar\alpha_{min}\big)^m$$
 
 where we have used (5.6) and the definitions of $(\bar T,\bar\Gamma)$-u.p.
 and of  $\witi\Sigma_{\Pi'}(\alw)$. Therefore, (5.5) holds.
 In order to prove that $\wdal$ is  a distance,
 the only nontrivial fact to prove is that if $Q,Q'\in\vn{\infty}$, $Q\ne Q'$, then 
$\wdal(Q,Q')>0.$ But, since for some natural $m$ we have $Q,Q'\in\vn m$,
by (5.5) we have $\wdal(Q,Q')\ge c_{3,\alpha} \big( \bar\alpha_{min}\big)^m>0$.
 \end{proof}

\begin{lem}\label{5.5} If $\alpha$ is $(T,\Gamma)$ u.p.
there exists $c_{4,\alpha}>0$ such that for every $\tw\in W^*$
$$c_{4,\alpha} \alpha_{\tw}\diam_{\wdal}(\vn{\infty})\le\diam_{\wdal}(\vn{\infty}_{\tw})
\le \alpha_{\tw}\diam_{\wdal}(\vn{\infty}).\eqno(5.7)$$
\end{lem}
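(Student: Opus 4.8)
The plan is to prove the two inequalities of (5.7) separately; the right-hand (upper) one is soft and follows from the scaling identity (5.3), whereas the left-hand one carries the real content and will rely on Lemmas 3.5 and 5.4. For the upper bound I would use that $\vn{\infty}_{\tw}=\psi_{\tw}(\vn{\infty})$ with $\psi_{\tw}$ injective, and write any $Q,Q'\in\vn{\infty}_{\tw}$ as $Q=\psi_{\tw}(\widehat Q)$, $Q'=\psi_{\tw}(\widehat Q')$ with $\widehat Q,\widehat Q'\in\vn{\infty}$. For each path $\Pi$ joining $\widehat Q$ and $\widehat Q'$, the path $\psi_{\tw}(\Pi)$ joins $Q$ and $Q'$ and $\Sigma_{\psi_{\tw}(\Pi)}(\alw)=\alpha_{\tw}\,\Sigma_{\Pi}(\alw)$ by (5.3); taking the infimum over $\Pi$ gives $\wdal(Q,Q')\le\alpha_{\tw}\,\wdal(\widehat Q,\widehat Q')\le\alpha_{\tw}\,\diam_{\wdal}(\vn{\infty})$, and the supremum over $Q,Q'$ then yields the right-hand inequality.

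For the lower bound I would fix $(j_1,j_2)\in\widehat J$ with $\wdal(P_{j_1},P_{j_2})=\diam_{\wdal}(\vo)$ and test the diameter on the two points $\psi_{\tw}(P_{j_1}),\psi_{\tw}(P_{j_2})\in V_{\tw}\subseteq\vn{\infty}_{\tw}$, so that $\diam_{\wdal}(\vn{\infty}_{\tw})\ge\wdal\big(\psi_{\tw}(P_{j_1}),\psi_{\tw}(P_{j_2})\big)$. Since the infimum defining $\wdal$ may be taken over strict paths, it is enough to show $\Sigma_{\Pi}(\alw)\ge\alpha_{\tw}\,\mu$ for every strict path $\Pi$ from $\psi_{\tw}(P_{j_1})$ to $\psi_{\tw}(P_{j_2})$, where $\mu:=\min\{\diam_{\wdal}(\vo),\,c_{3,\alpha}\,\bar\alpha_{min}\}>0$. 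I would split on whether $\Pi$ stays in $K_{\tw}$. If $\Pi\subseteq K_{\tw}$, its vertices lie in $\vn{\infty}\cap K_{\tw}=\vn{\infty}_{\tw}$ (Lemma 2.6), so by Lemma 3.2 the pullback $\psi_{\tw}^{-1}(\Pi)$ is a strict path in $\vn{\infty}$ from $P_{j_1}$ to $P_{j_2}$, and (5.3) gives $\Sigma_{\Pi}(\alw)=\alpha_{\tw}\,\Sigma_{\psi_{\tw}^{-1}(\Pi)}(\alw)\ge\alpha_{\tw}\,\diam_{\wdal}(\vo)$. If instead $\Pi$ leaves $K_{\tw}$ (which forces $\tw\ne\empti$), I would apply Lemma 3.5 in the sharper form supplied by its proof to obtain $l<m$ and a strict subpath $\Pi''$ entirely contained in $K_{i_1,\dots,i_l}$ whose endpoints lie in $V_{i_1,\dots,i_{l+1}}$ and are distinct because $\Pi$ is strict; pulling back by $\psi_{i_1,\dots,i_l}^{-1}$ produces a strict path in $\vn{\infty}$ between two distinct points of $V_{i_{l+1}}\subseteq\von$, so the case $m=1$ of (5.5) yields $\Sigma_{\Pi''}(\alw)=\alpha_{i_1,\dots,i_l}\,\Sigma_{\psi_{i_1,\dots,i_l}^{-1}(\Pi'')}(\alw)\ge\alpha_{i_1,\dots,i_l}\,c_{3,\alpha}\,\bar\alpha_{min}\ge\alpha_{\tw}\,c_{3,\alpha}\,\bar\alpha_{min}$, where I use $\alpha_{i_1,\dots,i_l}\ge\alpha_{\tw}$ (all factors are $<1$) and $\Sigma_{\Pi}(\alw)\ge\Sigma_{\Pi''}(\alw)$ (nonnegativity of $\alw$).

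Combining the two cases gives $\Sigma_{\Pi}(\alw)\ge\alpha_{\tw}\,\mu$ for every strict $\Pi$, hence $\diam_{\wdal}(\vn{\infty}_{\tw})\ge\alpha_{\tw}\,\mu$; since $0<\diam_{\wdal}(\vn{\infty})<\infty$ (positivity because $\wdal$ is a distance by Lemma 5.4, finiteness from Lemma 5.3 at $\tw=\empti$), the claim follows with $c_{4,\alpha}:=\mu/\diam_{\wdal}(\vn{\infty})$, which is automatically $\le 1$ as $\diam_{\wdal}(\vo)\le\diam_{\wdal}(\vn{\infty})$. The step I expect to be the main obstacle is exactly the escaping-path case: one must rule out that a path between two boundary vertices of the copy $K_{\tw}$ could leave $K_{\tw}$ along a cheap detour. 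This is precisely where Lemma 3.5 (which forces such a detour to contain a subpath living in a strictly larger copy $K_{i_1,\dots,i_l}$ and running between two cell-vertices of $V_{i_1,\dots,i_{l+1}}$) combines with the uniform base-scale lower bound of Lemma 5.4 to charge that subpath a cost of order $\alpha_{i_1,\dots,i_l}\ge\alpha_{\tw}$, which is what keeps the copy diameter of order $\alpha_{\tw}$.
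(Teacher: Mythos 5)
Your proof is correct and follows essentially the same route as the paper's: the upper bound comes from the scaling identity (5.3), and the lower bound from the same dichotomy between strict paths staying inside $K_{\tw}$ (pulled back by $\psi_{\tw}^{-1}$ via Lemmas 2.6 and 3.2) and paths escaping $K_{\tw}$, which Lemma 3.5 combined with the $m=1$ case of Lemma 5.4 charges a cost at least $\alpha_{\tw}\,c_{3,\alpha}\,\bar\alpha_{min}$. The only (harmless) difference is that you test the diameter on the fixed pair $\psi_{\tw}(P_{j_1}),\psi_{\tw}(P_{j_2})$ realizing $\diam_{\wdal}(\vo)$, whereas the paper bounds $\wdal(Q,Q')$ for arbitrary $Q,Q'\in\vn{\infty}_{\tw}$ and then takes the supremum, so your constant involves $\diam_{\wdal}(\vo)$ where the paper's involves $\diam_{\wdal}(\vn{\infty})$.
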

\begin{proof}
Let $Q,Q'\in \vn{\infty}_{\tw}$, $Q=\psi_{\tw}(P)$, $Q'=\psi_{\tw}(P')$ with $P,P'\in
\vn{\infty}$. We have

$$\wdal(Q,Q')=\min\{A,B\},$$
$$A:=\inf_{\Pi\in\cp}\Sigma_{\Pi}(\alw),\quad
\cp:=\psi_{\tw}\Big( \tpath_{P,P'}\Big),$$
$$B:=\inf_{\Pi\in\cp'}\Sigma_{\Pi}(\alw),\quad\cp'=\tpath_{Q,Q'}
\setminus \cp.$$

Let ${\tw}=(i_1...i_m)$. Since, in view of (5.3), 
$$A=\alpha_{\tw}\wdal(P,P')\eqno (5.8)$$

 the second inequality in (5.7) follows at once.
By Lemma 3.5, if $\Pi\in \cp'$, then there exist $l<m$ and a subpath $\Pi'$ of $\Pi$
connecting different points of $V_{i_1...i_{l+1}}$ entirely contained in 
$\vn{\infty}_{i_1...i_l}$. Thus, $\Pi'=\psi_{i_1...i_l}(\Pi'')$, where $\Pi''$
is  a path in $\vn{\infty}$ connecting two different points in $\von$.
By Lemma 5.4,
$\Sigma_{\Pi''}(\alw)\ge  c_{3,\alpha}\bar\alpha_{min}.$ Hence, 

$$\Sigma_{\Pi}(\alw)\ge \Sigma_{\Pi'}(\alw)=\alpha_{i_1...i_l}\Sigma_{\Pi''}(\alw)$$
$$\ge \alpha_{\tw} c_{3,\alpha}\bar\alpha_{min}
\ge c_{4,\alpha} \alpha_{\tw}\diam_{\wdal}(\vn{\infty})$$

if $c_{4,\alpha}\le \disp{c_{3,\alpha} \bar\alpha_{min}\over \diam_{\wdal}(\vn{\infty}) }$.
If moreover, $c_{4,\alpha}<1$, in view of (5.8), the first inequality in (5.7) holds.
\end{proof}

\begin{lem}\label{5.6}
For every polyratio $\balpha$ 
$$\Dal(Q,Q')\le \wdal(Q,Q')\le C_{1,\alpha}\Dal(Q,Q')\quad
\forall\, Q,Q'\in \vn{\infty}.\eqno (5.9)$$
\end{lem}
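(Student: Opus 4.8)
The plan is to establish the two inequalities of (5.9) separately, the lower bound $\Dal\le\wdal$ being essentially a translation of paths into prechains, and the upper bound $\wdal\le C_{1,\alpha}\Dal$ resting on Lemma 5.3 together with the fact that the consecutive cells of a \emph{chain} meet only at vertices. Throughout I use that $\wdal$ is a pseudodistance on $\vn{\infty}$ and that $\Dal(Q,Q')$ may be computed either over prechains or over chains.

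For $\Dal(Q,Q')\le\wdal(Q,Q')$ I would take an arbitrary path $\Pi\in\path_{Q,Q'}$ and read off a prechain from its edges. For each $h$ the relation $Q_{h-1,\Pi}\sim Q_{h,\Pi}$ means $(Q_{h-1,\Pi},Q_{h,\Pi})=\psi_{\witi w(h,\Pi)}(P_{\witi\iota(h,\Pi)})$, so both endpoints of the $h$-th edge lie in $K_{\witi w(h,\Pi)}$. Writing $\tw_h:=\witi w(h,\Pi)$, I then have $Q\in K_{\tw_1}$, $Q'\in K_{\tw_{n(\Pi)}}$ and $Q_{h,\Pi}\in K_{\tw_h}\cap K_{\tw_{h+1}}$, so $\cC:=(\tw_1,\dots,\tw_{n(\Pi)})\in G'(Q,Q')$ and $A(\cC)=\sum_h\alpha_{\tw_h}=\Sigma_{\Pi}(\alw)$ by (4.2$'$). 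Hence $\Dal(Q,Q')\le\Sigma_{\Pi}(\alw)$, and passing to the infimum over $\Pi\in\path_{Q,Q'}$ gives the first inequality.

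For the reverse inequality I would fix a chain $\cC=(\tw_1,\dots,\tw_n)\in G(Q,Q')$. Its words are pairwise incomparable, so for consecutive indices Lemma 2.2 yields $K_{\tw_h}\cap K_{\tw_{h+1}}=V_{\tw_h}\cap V_{\tw_{h+1}}$, a set that is nonempty since $\cC$ is a prechain; I choose $R_h\in V_{\tw_h}\cap V_{\tw_{h+1}}$ for $1\le h\le n-1$ and put $R_0:=Q$, $R_n:=Q'$. For each $h$ both $R_{h-1}$ and $R_h$ then lie in $\vn{\infty}_{\tw_h}$: for the interior points this is $V_{\tw_h}\subseteq\vn{\infty}_{\tw_h}$, and for the two end terms it is $\vn{\infty}\cap K_{\tw_h}=\vn{\infty}_{\tw_h}$ of Lemma 2.6 i). Using the triangle inequality for the pseudodistance $\wdal$ and then Lemma 5.3, I obtain $\wdal(Q,Q')\le\sum_{h=1}^{n}\wdal(R_{h-1},R_h)\le\sum_{h=1}^{n}\diam_{\wdal}(\vn{\infty}_{\tw_h})\le C_{1,\alpha}\sum_{h=1}^{n}\alpha_{\tw_h}=C_{1,\alpha}A(\cC)$. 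Taking the infimum over $\cC\in G(Q,Q')$ gives $\wdal(Q,Q')\le C_{1,\alpha}\Dal(Q,Q')$.

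The only delicate point is ensuring that the junction points $R_h$ are honest vertices of $\vn{\infty}$ sitting inside the correct copy $\vn{\infty}_{\tw_h}$; this is precisely where restricting the infimum to \emph{chains} (not arbitrary prechains) is indispensable, since incomparability of $\tw_h$ and $\tw_{h+1}$ is what lets Lemma 2.2 collapse $K_{\tw_h}\cap K_{\tw_{h+1}}$ onto $V_{\tw_h}\cap V_{\tw_{h+1}}$. Once the $R_h$ are placed in the right copies, the estimate is just the triangle inequality combined with the diameter bound of Lemma 5.3, and no further use of $(T,\Gamma)$-uniform positivity is needed, consistent with the statement holding for every polyratio.
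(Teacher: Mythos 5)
Your proposal is correct and follows essentially the same route as the paper: reading a prechain off the edges of a path for the inequality $\Dal\le\wdal$, and, for the reverse one, picking junction points in the intersections of consecutive cells of a chain (via Lemma 2.2) and combining the triangle inequality with the diameter bound of Lemma 5.3. The only cosmetic difference is that at the interior junctions the paper forms a weak path and bounds those contributions by $\alpha_{\tw_h}$ exactly, reserving the constant $C_{1,\alpha}$ for the two end segments, whereas you apply Lemma 5.3 uniformly to every consecutive pair; since $C_{1,\alpha}\ge 1$ both estimates give $\wdal(Q,Q')\le C_{1,\alpha}A(\cC)$, and your version even absorbs the $n=1$ case that the paper treats separately.
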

\begin{proof}
We can and do assume $Q\ne Q'$.
Let $\Pi\in \path_{Q,Q'}$. Then for every $h=1,...,n(\Pi)-1$ we have
$Q_{h,\Pi}\in  K_{\witi w(h,\Pi)}\cap K_{\witi w(h+1,\Pi)}$. Thus,
$\bar \cC:=\big(\witi w(1,\Pi),...,\witi w(n(\Pi),\Pi)\big)\in G'(Q,Q')$.
Moreover,

$$A(\bar \cC)=\sum\limits_{h=1}^{n(\Pi)}
 \balpha_{\witi w(h,\Pi)}=\Sigma_{\Pi}(\alw)$$

so that the first inequality in (5.9) follows. Let next
$\bar \cC\in G(Q,Q')$, and $\bar \cC=(\tw_1,...,\tw_n)$.
Suppose for the moment $n>1$.
For $h=0,...,n-2$
let $Q_h$ be an element of $K_{\tw_{h+1}}\cap K_{\tw_{h+2}}$, thus,
$\tw_{h+1}$ and $\tw_{h+2}$ being incomparable, by Lemma 2.2
we have $Q_h\in V_{\tw_{h+1}}\cap V_{\tw_{h+2}}$. 
it follows that $\Pi$, defined by

$$\Pi:=(Q_0,...,Q_{n-2})$$

is a weak path, not necessarily a path. In fact, for every $h=1,...,n-2$
$Q_{h-1},Q_h\in  V_{\tw_{h+1}}$, so that either
$Q_{h-1}\ne Q_h$ and 
$\witi w(h,\Pi)=\tw_{h+1}$, or $Q_{h-1}= Q_h$, and, thanks to the
convention $\balpha_{\witi w(h,\Pi)}=0$ (see Remark 4.1), we have

$$\wdal(Q_0,Q_{n-2})\le \Sigma_{\Pi}(\alw)=\sum\limits_{h=1}^{n-2}
 \balpha_{\witi w(h,\Pi)}\le  \sum\limits_{h=2}^{n-1} \alpha_{\tw_h}.$$

 Moreover, since 
$Q\in K_{\tw_1}\cap \vn {\infty}= \vn{\infty}_{\tw_1}$, we have
$Q,Q_0\in \vn{\infty}_{\tw_1}$, hence by Lemma 5.3 
$\wdal(Q,Q_0)\le C_{1,\balpha} \balpha_{\tw_1}$. Similarly,
$\wdal(Q_{n-2},Q')\le C_{1,\balpha} \balpha_{\tw_n}$. In conclusion

$$\wdal(Q,Q')\le \wdal(Q,Q_0)+\wdal(Q_0,Q_{n-2})+\wdal(Q_{n-2},Q')$$
$$\le C_{1,\balpha} \balpha_{\tw_1}+\,
\sum\limits_{h=2}^{n-1} \alpha_{\tw_h}\ +C_{1,\balpha} \balpha_{\tw_n}$$
$$\le C_{1,\balpha}\Big(\balpha_{\tw_1}+\,
\sum\limits_{h=2}^{n-1} \alpha_{\tw_h}+\ \balpha_{\tw_n}\Big)=
C_{1,\balpha}    \sum\limits_{h=1}^{n} \alpha_{\tw_h}=
C_{1,\balpha} A(\bar \cC),.$$

We have proved the inequality 
$$\wdal(Q,Q')\le C_{1,\balpha} A(\bar \cC)\eqno (5.10)$$
when $n>1$. However,  (5.10) also holds in the case $n=1$. In fact, in this case,
$Q,Q'\in \vn{\infty}\cap K_{\tw_1}=\vn{\infty}_{\tw_1}$.  Thus,
$$\wdal(Q,Q')\le \diam_{\wdal}\big(\vn{\infty}_{\tw_1}\big)\le 
 C_{1,\balpha} \alpha_{\tw_1}=       C_{1,\balpha} A(\bar \cC).$$
Since (5.10) holds foe every
$\bar \cC\in G(Q,Q')$,  the second inequality in (5.9) easily follows.
\end{proof}

\begin{theo}\label{5.7}
If the polyratio $\alpha$ is $(T,\Gamma)$-u.p., then $\Dal$
is both $\balpha$-self-similar and $\balpha$-scaling. Thus,
$\balpha$ is both metric and as. metric. Moreover,
$\Dal$ induces on $K$ the same topology as the original distance.
\end{theo}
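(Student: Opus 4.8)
The plan is to deduce everything from the machinery already assembled in Section~5. By Corollary~4.4 the hypothesis that $\alpha$ is $(T,\Gamma)$-u.p.\ is equivalent to being $(\bar T,\bar\Gamma)$-u.p., so I may freely use the constant $c_{3,\alpha}>0$ from~(4.5$'$). First I would establish that $\wdal$ is a genuine distance on $\vn{\infty}$: this is exactly Lemma~5.4, whose lower bound~(5.5) forces $\wdal(Q,Q')>0$ whenever $Q\ne Q'$. Next I would invoke Lemma~5.5, which gives the two-sided estimate~(5.7) comparing $\diam_{\wdal}(\vn{\infty}_{\tw})$ with $\alpha_{\tw}\diam_{\wdal}(\vn{\infty})$; this is the heart of the scaling property.

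The main work is then to transfer these facts about $\wdal$ on $\vn{\infty}$ to facts about $\Dal$ on all of $K$. The bridge is Lemma~5.6, whose inequality~(5.9) shows $\Dal$ and $\wdal$ are bi-Lipschitz equivalent on $\vn{\infty}$, with constants $1$ and $C_{1,\alpha}$. Combining~(5.9) with~(5.7) I obtain, for all $\tw\in W^*$,
$$
\frac{c_{4,\alpha}}{C_{1,\alpha}}\,\alpha_{\tw}\diam_{\Dal}(\vn{\infty})\le
\diam_{\Dal}(\vn{\infty}_{\tw})\le \alpha_{\tw}\diam_{\Dal}(\vn{\infty}),
$$
so that $\Dal$ restricted to $\vn{\infty}$ already satisfies the scaling bounds~(1.3) up to renamed constants. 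In particular $\Dal(Q,Q')>0$ for distinct $Q,Q'\in\vn{\infty}$, so by Lemma~5.2~i) the pseudodistance $\Dal$ is in fact an $\balpha$-self-similar distance on $K$. This simultaneously yields that $\balpha$ is a metric polyratio and, by Lemma~5.2~ii), that $\Dal$ induces the original topology on $K$.

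It remains to promote the diameter estimate from $\vn{\infty}_{\tw}$ to $K_{\tw}$, which is needed because the scaling condition~(1.3) is phrased in terms of $\diam_{\Dal}(K_{\tw})=\diam_{\Dal}\big(\psi_{\tw}(K)\big)$. Here I would use Lemma~2.6~ii), which states that $\vn{\infty}_{\tw}$ is dense in $K_{\tw}$; since $\Dal$ is continuous and induces the original topology (already established), density gives $\diam_{\Dal}(K_{\tw})=\diam_{\Dal}(\vn{\infty}_{\tw})$, and likewise $\diam_{\Dal}(K)=\diam_{\Dal}(\vn{\infty})$. Feeding these equalities into the displayed double inequality produces precisely~(1.3) with $c_{1,\alpha}=c_{4,\alpha}/C_{1,\alpha}$ and $c_{2,\alpha}=1$, so $\Dal$ is an $\balpha$-scaling distance and $\balpha$ is as.\ metric.

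The step I expect to be the main obstacle is this last density/continuity passage from $\vn{\infty}_{\tw}$ to $K_{\tw}$: one must be careful that the topology induced by $\Dal$ genuinely agrees with the original one (so that $\vn{\infty}_{\tw}$ dense in $K_{\tw}$ in the original topology is also $\Dal$-dense), and that the diameter is preserved under taking closures, which requires the uniform continuity furnished by the bounded-diameter estimates of Lemma~5.3 and Lemma~5.5. Everything else is an essentially bookkeeping assembly of Lemmas~5.2 through~5.6 together with Corollary~4.4, so the conceptual content is already contained in those preparatory results.
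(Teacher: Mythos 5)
Your proposal is correct and follows essentially the same route as the paper's own proof: positivity of $\Dal$ on $\vn{\infty}$ via Lemmas 5.4 and 5.6, then Lemma 5.2 for self-similarity, the metric-polyratio conclusion and the topology statement, then Lemma 2.6 ii) (density, hence equality of diameters) combined with Lemmas 5.5 and 5.6 for the scaling property. The only quibble is bookkeeping: your displayed chain of inequalities actually yields the upper constant $C_{1,\alpha}$ rather than $1$ (the value $c_{2,\alpha}=1$ instead follows directly from the already-established self-similarity of $\Dal$), which is immaterial since (1.3) only requires the existence of some positive constants.
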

\begin{proof}
By Lemma 5.6 and Lemma 5.4, if $Q,Q'\in\vn{\infty}$, $Q\ne Q'$, then
$\Dal(Q,Q')\ge {1\over C_{1,\balpha}} \wdal(Q,Q')>0$. Thus,
by Lemma 5.2, $\Dal$ is an \asd \ on $K$ and  
induces on $K$ the same topology as the original distance.
Therefore, in view of Lemma 2.6 ii)
$\vn{\infty}_{\tw}$ is dense in $K_{\tw}$ for every
$\tw\in W^*$ also with respect to $\Dal$. Now,
$\balpha$ is as. metric by Lemmas 5.5 and 5.6.
\end{proof}


\begin{rem}
\text{\rm  Note that it follows from Lemma 4.3 i) that,$\,$if  we have}

$H\big(T\pal_{\gamma }(e_{\iota}) \big)\ge 1\ \ \,  \forall\,\iota\in\widehat J\ \
\forall\, \gamma\in\Gamma,$
\text{\rm then $\alpha\,$is$\,(T,\Gamma)$-u.p.. More
 precisely,}
 
 \text{\rm we can prove
   by induction that } 
$$ H\big(T\pal_{\gamma_{1} }\circ\cdots\circ T\pal_{\gamma_{m}}(e_{\iota})\big)\ge 1
\quad \forall\, \iota\in\widehat J
\ \ \forall\, \gamma_1,...,\gamma_m\in\Gamma.\eqno(5.11)$$
\text{\rm In fact, (5.11) is trivial for $m=0$. Also, if it holds for $m$, then}
$$ H\Big(T\pal_{\gamma_{1} }\circ\cdots\circ T\pal_{\gamma_{m+1}}(e_{\iota})\Big)
=\sum\limits_{\iota'\in \widehat J}
T\pal_{\gamma_{m+1} } (e_{\iota})_{\iota'} H
\Big( T\pal_{\gamma_{1} }\circ\cdots\circ T\pal_{\gamma_{m}}(e_{\iota'})\Big)$$.
$$ \ge\sum\limits_{\iota'\in \widehat J}
T\pal_{\gamma_{m+1} } (e_{\iota})_{\iota'}
\eqno \text{\rm by\ (5.11)}$$
$$=H\Big(T\pal_{\gamma_{m+1} }(e_{\iota})\Big)\ge 1,$$
\text{\rm where $\,$in  $\,$the $\,$last $\,$equality $\,$we 
$\,$have $\,$used $\,$Lemma 4.3 i) $\,$with $m=0$,} 

\text{\rm since$\, H(e_{\iota'})=1,\,$and (5.11) holds for$\,m+1.$
As$\,$a$\,$consequence$\,$of$\,$(5.11),}

\text{\rm and of Corollary 4.4,  if $\,\alpha_i\ge {1\over 2}\,$ for every $\, i=1,...,k$, then
there exists} 

\text{\rm an $\alpha$-scaling distance on $K$.}

\text{\rm \qua  In fact, if $\,\gamma(\iota)\,$ is not
a strong $\,\von$-path,
 then$\,\witi w(h,\gamma(\iota))=\empti\,$, thus}
 
 \text{\rm $\,\alpha_{\witi w(h,\gamma(\iota))}
=1$ for at $\,$least one $\,h=1,...,n(\gamma(\iota))$.
Therefore, in view of} 

\text{\rm (4.4),  (5.11) holds. If on the contrary, $\gamma(\iota)$ is 
a strong $\von$-path, as it}

\text{\rm  connects two different points 
of $\vo$,  the sum in$\,(4.4)\,$has at least two}

\text{\rm   summands. Indeed, $\gamma(\iota)\,$connects two$\,$different
points of $\vo$,$\,$and  two} 

\text{\rm consecutive vertices of$\, \gamma(\iota)$, by definition
of a$\,$strong$\, \von$-path, lie in a}

\text{\rm a same $1$-cell, which, by (2.8),  cannot contain more than
one point of}

\text{\rm    $\vo$.  Moreover, in (4.4) we have
$\witi w(h,\gamma(\iota))=i_h$ for some $i_h=1,...,k$,}

\text{\rm  and, $\,$by our $\,$assumptions,$\,$ such summands 
$\,$are not less than $\,{1\over 2}\,$,$\,$  thus}
 
 \text{\rm (5.11)  holds in any case.} 
\end{rem}

\section{Examples}

In view of the results of Section 5, the fact whether
a given polyratio $\alpha$ is metric (or as. metric, which is the same)
on  $K$ is reduced
to the problem whether $\alpha$ is   $(T,\Gamma)$-u.p..
In turns, such a problem is strictly related to the notion of {\it joint spectral radius}
or better {\it joint spectral subradius}. More precisely, it is related to the fact that
the joint spectral subradius is greater than or equal to $1$. However,
the notion of $(T,\Gamma)$-u.p. in general is not perfectly equivalent
to having the     joint spectral subradius  greater than or equal to $1$.
Note that the determination of the joint spectral radius, as well as 
of the joint spectral subradius is in general difficult.
In this Section, we will discuss some explicit necessary and sufficient conditions
for $\alpha$ being $(T,\Gamma)$-u.p.. However, such conditions
require the existence of some special paths, and this occurs only
on some fractals having a rather simple structure.

\qua  If ${\bf\Pi}=(\Pi(1),...,\Pi(l))$ is a finite sequence of paths we put

$$\Sigma_{\bf\Pi}(\alw)=\sum\limits_{\beta=1}^l \Sigma_{\Pi_{\beta}}(\alw).$$

 We say that the subpaths $\Pi(1),...,\Pi(l)$ of $\Pi$ are
{\it separated}, if they have length greater than $1$, and moreover,
the intervals

$$\big]n_1(\Pi,\Pi(\beta)),n_2(\Pi,\Pi(\beta))\big], \quad \beta=1,...,l,$$

 are mutually disjoint,
roughly speaking this means that they  have no common edge. 
In this setting we say that
 ${\bf \Pi}:=\big(\Pi(1),...,\Pi(l)\big)$ is  a {\it multisubpath} or shortly multisp. of $\Pi$.
%
We say that a sequence $ \Pi':=(Q_{h_0,\Pi},...,Q_{h_s,\Pi})$ is a {\it refinement} of $\Pi$ if
$0= h_0<h_1<\cdots<h_s=n(\Pi)$, and moreover
$Q_{h_{l+1},\Pi}=Q_{h_{l}+1,\Pi}$ for every for every $l=0,...,s-1$. Thus,
$\witi\Pi$ is a path.  Note that if
$\Pi$ is a {\it strict} path, then every refinement of $\Pi$ amounts to $\Pi$ itself.

 \qua Suppose now there exist finitely many $\von$
paths $\Pi_1,...,\Pi_r$ such that

\smallskip
(i) For every $s=1,...,r$ there exists  $\iota_s\in \widehat J$  such that
$\Pi_s$ is a  strict and strong
 $(\iota_s,\von)$-path, and also $\witi\iota(h,\Pi_s)=\iota_s$ for every $h$.

\smallskip
(ii) There exists $\witi{\,m\,}\in\bna$ such that 
every  path $\Pi$ connecting two
different  points of $\vo$ and such that
$|\witi w(h,\Pi)|\ge \witi{\,m\,}$for every $h=1,..., n(\Pi)$, also
 contains a subpath of the form $\psi_{\tw}(\bar\Pi)$
where $\bar\Pi$ is a $\iota_s$-path for some $s=1,...,r$, and $\tw\in W_
 {\witi{\,m\,}}$.

\smallskip
(iii) For every $s=1,...,r$ and  every strong $\iota_s$-path $\Pi$,  there 
exist $s'=1,..,r$ and  a multisp. 
$\widehat{\bf \Pi}:=(\widehat \Pi_1,...,\widehat\Pi_{n(\Pi_{s'})})$ of $\Pi$ with refinements
 $\widehat\Pi'_{\beta}$ of $\widehat\Pi_{\beta}$, 
 $s(\beta)=1,...,r$
such that
$\widehat\Pi'_{\beta}=\psi_{\witi w(\beta,\Pi_{s'})}(\widehat\Pi''_{\beta})$ 
where $\widehat\Pi''_{\beta}$ is a $\iota_{s(\beta)}$-path,
and we put ${\bf{\widehat \Pi'}}=(\widehat\Pi'_1,...,\widehat\Pi'_{n(\Pi_{s'})})$.
\smallskip

\begin{lem}\label{6.1}
Under assumption (iii), if $\Pi$ is a strong $\iota_s$-path, $s=1,...,r$, 
and $\Sigma_{\widehat\Pi''_{\beta}}(\alw)\ge 1$ for every $\beta=1,...,n(\Pi_{s'})$, then
$$\Sigma_{\Pi}(\alw)\ge \Sigma_{\Pi_{s'}}(\alw).$$
\end{lem}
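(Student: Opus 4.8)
The plan is to bound $\Sigma_{\Pi}(\alw)$ from below by a chain of four elementary steps that descend from $\Pi$ through the multisubpath supplied by (iii), its refinements, the rescaling identity (5.3), and finally the hypothesis, landing on $\Sigma_{\Pi_{s'}}(\alw)$. First I would apply (iii) to the strong $\iota_s$-path $\Pi$ to fix $s'$, the multisubpath $\widehat{\bf \Pi}=(\widehat\Pi_1,\ldots,\widehat\Pi_{n(\Pi_{s'})})$ of $\Pi$, the refinements $\widehat\Pi'_{\beta}$ of $\widehat\Pi_{\beta}$, and the $\iota_{s(\beta)}$-paths $\widehat\Pi''_{\beta}$ with $\widehat\Pi'_{\beta}=\psi_{\witi w(\beta,\Pi_{s'})}(\widehat\Pi''_{\beta})$.

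Step one uses the separation. Since $\widehat{\bf\Pi}$ is a multisubpath of $\Pi$, the index intervals $]n_1(\Pi,\widehat\Pi_{\beta}),n_2(\Pi,\widehat\Pi_{\beta})]$ are mutually disjoint, and each such interval indexes exactly the edges of $\widehat\Pi_{\beta}$ inside $\Pi$; hence the edges of the various $\widehat\Pi_{\beta}$ form disjoint subfamilies of the edges of $\Pi$. As $\alw$ is nonnegative, discarding the edges of $\Pi$ used by no $\widehat\Pi_{\beta}$ can only lower the total, so $\Sigma_{\Pi}(\alw)\ge\sum_{\beta=1}^{n(\Pi_{s'})}\Sigma_{\widehat\Pi_{\beta}}(\alw)$. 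Step two uses the refinements: by the definition of a refinement, each edge of $\widehat\Pi'_{\beta}$ coincides, as a pair of points, with a genuine edge of $\widehat\Pi_{\beta}$ (the one at position $h_l+1$), and these positions are strictly increasing, hence distinct; since $\alw$ depends only on the endpoint pair and is nonnegative, $\Sigma_{\widehat\Pi'_{\beta}}(\alw)$ is a partial sum of the nonnegative terms of $\Sigma_{\widehat\Pi_{\beta}}(\alw)$, giving $\Sigma_{\widehat\Pi'_{\beta}}(\alw)\le\Sigma_{\widehat\Pi_{\beta}}(\alw)$. Combining the two steps yields $\Sigma_{\Pi}(\alw)\ge\sum_{\beta}\Sigma_{\widehat\Pi'_{\beta}}(\alw)$.

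Step three is the rescaling (5.3) applied to $\widehat\Pi'_{\beta}=\psi_{\witi w(\beta,\Pi_{s'})}(\widehat\Pi''_{\beta})$, which gives $\Sigma_{\widehat\Pi'_{\beta}}(\alw)=\alpha_{\witi w(\beta,\Pi_{s'})}\,\Sigma_{\widehat\Pi''_{\beta}}(\alw)$; invoking the hypothesis $\Sigma_{\widehat\Pi''_{\beta}}(\alw)\ge 1$ together with $\alpha_{\witi w(\beta,\Pi_{s'})}>0$ bounds each summand below by $\alpha_{\witi w(\beta,\Pi_{s'})}$. Summing over $\beta$ and reading the result through (4.2$'$) applied to $\Pi_{s'}$, namely $\sum_{\beta=1}^{n(\Pi_{s'})}\alpha_{\witi w(\beta,\Pi_{s'})}=\Sigma_{\Pi_{s'}}(\alw)$, closes the chain $\Sigma_{\Pi}(\alw)\ge\Sigma_{\Pi_{s'}}(\alw)$. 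The three monotonicity/identity facts are routine; the one delicate point, which I expect to be the main obstacle, is the bookkeeping in steps one and two: verifying that \emph{separated} really produces disjoint families of \emph{edges} (not merely of vertices), and that the refinement definition genuinely matches each refinement edge with a distinct edge of $\widehat\Pi_{\beta}$, so that both comparisons are honest partial sums of nonnegative terms.
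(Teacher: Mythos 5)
Your proof is correct and takes essentially the same route as the paper's: your steps one and two are exactly the paper's inequalities (6.2) and (6.1) (proved there in the opposite order), your step three is the paper's identity (6.3) (i.e.\ (5.3)), and the closing chain of inequalities is identical. The bookkeeping you flag as the delicate point --- that separated subpaths occupy disjoint intervals of edge indices of $\Pi$, and that each refinement edge matches a distinct edge of $\widehat\Pi_{\beta}$ via $Q_{h_{l+1},\Pi}=Q_{h_l+1,\Pi}$ --- is precisely what the paper verifies in establishing (6.1) and (6.2).
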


\begin{proof}
Note that, if $\Pi'$ is a refinement of $\Pi$
 we have 

$$ \Sigma_{\Pi'}(\alw)\le \Sigma_{\Pi}(\alw).\eqno (6.1)$$
In fact, using the previous notation we have 
$$\Sigma_{\Pi'}(\alw)=
\sum\limits_{l=1}^s\alw(Q_{h_{l-1},\Pi},Q_{h_l,\Pi})$$
$$=\sum\limits_{l=1}^s\alw(Q_{h_{l-1},\Pi},Q_{h_{l-1}+1,\Pi})\le
\sum\limits_{h=1}^{n(\Pi)}\alw(Q_{h-1,\Pi},Q_{h,\Pi})
=\Sigma_{\Pi}(\alw).$$

Moreover,

$$\Sigma_{\bf\widehat \Pi'}(\alw)\le \Sigma_{\Pi}(\alw). \eqno (6.2)$$

In fact, on one hand, by (6.1) we have 

$$ \Sigma_{\bf\widehat\Pi'}(\alw)=\sum\limits_{\beta=1}^{n(\Pi_{s'})} 
\Sigma_{\widehat\Pi'_{\beta}}(\alw)
\le \sum\limits_{\beta=1}^{n(\Pi_{s'})}\Sigma_{\widehat\Pi_{\beta}}(\alw)=
 \Sigma_{\bf\widehat\Pi}(\alw).$$

On the other, by (4.2$'$) we have
$$\Sigma_{\bf\widehat\Pi}(\alw)=\sum\limits_{\beta=1}^{n(\Pi_{s'})}
 \Sigma_{\widehat\Pi_{\beta}}(\alw)$$
$$=  \sum\limits_{\beta=1}^{n(\Pi_{s'})}\sum\limits_{h=1}^{n(\widehat\Pi_{\beta})}
 \alpha_{\witi w(h,\widehat\Pi_{\beta})}                     $$
$$=\sum\limits_{\beta=1}^{n(\Pi_{s'})}\sum\limits_{h=1}^{n(\widehat\Pi_{\beta})}
\alpha_{\witi w\big(h+n_1(\Pi,\widehat\Pi_{\beta}),\Pi\big)}
\le \sum\limits_{h=1}^{n(\Pi)}
\alpha_{\witi w(h,\Pi)}=\Sigma_{\Pi}(\alw).$$  

In fact we can easily verify that   $n(\widehat \Pi_{\beta})=
n_2(\Pi,\widehat\Pi_{\beta})-
n_1(\Pi, \widehat\Pi_{\beta})$,  and for every $h=1,...,n(\widehat\Pi_{\beta})$ we have
$\witi w(h,\widehat\Pi_{\beta})= \witi w\big(h+
n_1(\Pi,\widehat\Pi_{\beta}),\Pi\big)$,
$h+n_1(\Pi,\widehat\Pi_{\beta})\in 
 \big]n_1(\Pi,\widehat\Pi_{\beta}),n_2(\Pi,\widehat\Pi_{\beta}\big]$,
and moreover, the intervals  
$\big]n_1(\Pi,\widehat\Pi_{\beta}),n_2(\Pi,\widehat\Pi_{\beta})\big]$ are mutually 
disjoint. Thus, (6.2) is proved. 

Finally,  it is easy to verify that
for every path $\Pi$ and every $\tw\in W^*$ we have
$$\Sigma_{\psi_{\tw}(\Pi)}(\alw)=\alpha_{\tw} \Sigma_{\Pi}(\alw). \eqno (6.3)$$
By (6.2) and (6.3), we have

$$\Sigma_{\Pi}(\alw)\ge \Sigma_{\bf\widehat \Pi'}(\alw)$$
$$=\sum\limits_{\beta=1}^{n(\Pi_{s'})}  \Sigma_{\widehat \Pi'_{\beta}}(\alw)$$
$$=\sum\limits_{\beta=1}^{n(\Pi_{s'})} \alpha_{\witi w(\beta,\Pi_{s'})}
\Sigma_{\widehat \Pi''_{\beta}}(\alw)$$
$$\ge \sum\limits_{\beta=1}^{n(\Pi_{s'})} \alpha_{\witi w(\beta,\Pi_{s'})}=
\Sigma_{\Pi_{s'}}(\alw).$$
\end{proof}

\begin{lem}\label{6.2}
Under the previous assumptions,
suppose moreover that   for every $s'=1,...,r$ we have
$\Sigma_{\Pi_{s'}}(\alw)\ge 1$.
Let $\Pi$ be a  path connecting two different points of $\vo$.
Then 
$$\Sigma_{\Pi}(\alw)\ge \bar\alpha_{min}^{\witi{\,m\,}}. \eqno(6.4)$$

\end{lem}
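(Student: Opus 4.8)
The plan is to isolate the following claim and then read off (6.4) from it: \emph{for every $s=1,\dots,r$ and every $\iota_s$-path $\Pi$ (strong or not) one has $\Sigma_{\Pi}(\alw)\ge 1$.} Granting this, (6.4) follows by a dichotomy on the edge words. If some edge satisfies $|\witi w(h,\Pi)|<\witi{\,m\,}$, then $\alpha_{\witi w(h,\Pi)}\ge\bar\alpha_{min}^{|\witi w(h,\Pi)|}\ge\bar\alpha_{min}^{\witi{\,m\,}}$, and since every summand of $\Sigma_{\Pi}(\alw)$ is nonnegative we get $\Sigma_{\Pi}(\alw)\ge\bar\alpha_{min}^{\witi{\,m\,}}$. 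Otherwise $|\witi w(h,\Pi)|\ge\witi{\,m\,}$ for all $h$, so assumption (ii) furnishes a subpath $\psi_{\tw}(\bar\Pi)$ of $\Pi$ with $\tw\in W_{\witi{\,m\,}}$ and $\bar\Pi$ a $\iota_s$-path; then, using that a subpath contributes a partial sum, the scaling identity (6.3), and the claim,
$$\Sigma_{\Pi}(\alw)\ge\Sigma_{\psi_{\tw}(\bar\Pi)}(\alw)=\alpha_{\tw}\,\Sigma_{\bar\Pi}(\alw)\ge\bar\alpha_{min}^{\witi{\,m\,}}\cdot 1,$$
which is exactly (6.4).

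To prove the claim I would use strong induction on $M(\Pi):=\max_{h}|\witi w(h,\Pi)|$. A $\iota_s$-path has distinct end-points, hence at least one edge. If $\Pi$ is not strong, some edge has $\witi w(h,\Pi)=\empti$ and contributes $\alpha_{\empti}=1$, so $\Sigma_{\Pi}(\alw)\ge 1$; this disposes in particular of the base case $M(\Pi)=0$, where every edge word is empty. If $\Pi$ is strong, I apply assumption (iii) to get an index $s'$, a multisp. $\widehat{\bf\Pi}$, and refinements with $\widehat\Pi'_\beta=\psi_{\witi w(\beta,\Pi_{s'})}(\widehat\Pi''_\beta)$, each $\widehat\Pi''_\beta$ a $\iota_{s(\beta)}$-path. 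The induction hypothesis will yield $\Sigma_{\widehat\Pi''_\beta}(\alw)\ge 1$ for every $\beta$, whereupon Lemma 6.1 combined with the standing hypothesis $\Sigma_{\Pi_{s'}}(\alw)\ge 1$ gives $\Sigma_{\Pi}(\alw)\ge\Sigma_{\Pi_{s'}}(\alw)\ge 1$.

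The one delicate point, and the step I expect to be the real obstacle, is checking that the recursion descends, i.e. $M(\widehat\Pi''_\beta)<M(\Pi)$, so that the induction hypothesis is legitimately available for each $\widehat\Pi''_\beta$. Here I would note that $\widehat\Pi'_\beta$ is a refinement of a subpath of $\Pi$, so each of its edges is an edge of $\Pi$; if that edge carries the word $\witi w(\beta,\Pi_{s'})u$, the corresponding edge of $\widehat\Pi''_\beta$ carries the word $u$. Because $\Pi_{s'}$ is \emph{strong} by (i), we have $|\witi w(\beta,\Pi_{s'})|\ge 1$, so $|u|\le M(\Pi)-1$ and hence $M(\widehat\Pi''_\beta)\le M(\Pi)-1$. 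This strict decrease is precisely what makes the passage through (iii) well-founded; the remaining ingredients — monotonicity of $\Sigma$ under subpaths and refinements, and (6.3) — are routine.
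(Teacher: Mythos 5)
Your proposal is correct, and its skeleton coincides with the paper's: you isolate the intermediate claim (the paper's (6.5)) that every $\iota_s$-path $\Pi$ satisfies $\Sigma_{\Pi}(\alw)\ge 1$, prove it by induction with assumption (iii) feeding Lemma 6.1 (the non-strong case being trivial since an empty edge word contributes $\alpha_{\empti}=1$), and then deduce (6.4) by exactly the paper's dichotomy: either some edge word has length less than $\witi{\,m\,}$, or assumption (ii) together with the scaling identity (6.3) applies. The genuine difference is the induction measure and the way descent is certified. The paper inducts on $\bar m(\Pi)$, the largest $m$ such that some vertex of $\Pi$ lies in $\vn m\setminus \vn{m-1}$, and proves $\bar m(\widehat\Pi''_{\beta})\le \bar m(\Pi)-1$ geometrically via Corollary 2.5, using that each $\witi w(\beta,\Pi_{s'})$ has length exactly $1$ because $\Pi_{s'}$ is a strong $\von$-path. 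You instead induct on the maximal edge-word length $M(\Pi)$ and certify descent combinatorially: every edge of $\widehat\Pi'_{\beta}$ is an edge of $\Pi$ (being an edge of a refinement of a subpath), and by the uniqueness of edge representations guaranteed by (2.7$'$) its word is the concatenation of $\witi w(\beta,\Pi_{s'})$ with the word $u$ of the corresponding edge of $\widehat\Pi''_{\beta}$, whence $|u|\le M(\Pi)-1$ since $|\witi w(\beta,\Pi_{s'})|\ge 1$. Both descents are valid; yours is self-contained (it bypasses Corollary 2.5 and the paper's separate treatment of the case $m_{\beta,h}=0$) and needs only $|\witi w(\beta,\Pi_{s'})|\ge 1$ rather than $=1$, while the paper's vertex-depth measure is the one that meshes naturally with the $\vn m$ stratification used throughout Section 3. (In fact the two measures agree on paths, since for an edge $\big(\psi_w(P_{j_1}),\psi_w(P_{j_2})\big)$ with $j_1\ne j_2$ the deeper endpoint has depth exactly $|w|$, by (2.8) and Corollary 2.5; but neither proof needs this.)
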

\begin{proof}

Suppose for the moment $\Pi$ is a $\iota_s$-path for some $s=1,...,r$, and prove

$$\Sigma_{\Pi}(\alw)\ge 1. \eqno(6.5)$$

Let $\bar m=\bar m(\Pi)$ be 
the maximum $m$ such that
there exists a point in $\Pi$ that lies in $\vn m\setminus \vn {m-1}$ (here $\vn{-1}:=\empti$).
Note that we have $Q_{h,\Pi}\in \vn {\bar m}$ for every $h$ by the definition of
$\bar m$. 

\qua We prove (6.5) by induction on $\bar m$. If $\bar m=0$, then 
$\Pi$ is a $\vo$-path, and thus 
 $\alw(Q_{h-1,\Pi}, Q_{h,\Pi})=1$ for every $h=1,...,n(\Pi)$, 
and (6.5) is trivial.

\qua Suppose now (6.5) holds for $\bar m\le m_1\in\bna$ and prove it holds also for $m_1+1$.
Suppose $\bar m=m_1+1$. If $\Pi$ is not strong, then
then there exists $\bar h=1,...,n(\Pi)$ such that
$Q_{\bar h-1,\Pi}, Q_{\bar h,\Pi}\in\vo$, thus $\alw\big(Q_{\bar h-1,\Pi}, Q_{\bar h,\Pi}\big)=1$
and (6.5) is trivial. So, suppose $\Pi$ is  strong.
For every $\beta=1,...,n(\Pi_{s'})$ and $h=1,...,n(\widehat\Pi''_{\beta})$
let $m_{\beta,h}\in\bna$ be such that 

$$Q_{h,\widehat \Pi''_{\beta}}\in \vn{m_{\beta,h}}\setminus \vn{m_{\beta,h}-1}.$$

Thus if $m_{\beta,h}>0$, we have

$$Q_{h,\widehat\Pi'_{\beta}}=\psi_{\witi w(\beta,\Pi_{s'})}\big(
Q_{h,\widehat\Pi''_{\beta}}\big)\in \vn{m_{\beta,h}+1}\setminus \vn{m_{\beta,h}}.$$

In fact, since $\Pi_{s'}$ is a strong $\von$-path, then $\witi w(\beta,\Pi_{s'})$ has length
$1$, and we can use Corollary 2.5. Now, since $Q_{h,\widehat\Pi'_{\beta}}$ is also a vertex
of $\Pi$, by definition we have $m_{\beta,h}+1\le \bar m=m_1+1$, thus
$m_{\beta,h}\le m_1$, and this is trivially valid also if $m_{\beta,h}=0$.
By definition, $\bar m(\widehat \Pi''_{\beta})\le m_1$, and by our inductive hypothesis,
$\widehat \Pi''_{\beta}$ satisfies (6.5).   By Lemma 6.1, $\Sigma_{\Pi}\alw\ge 
\Sigma_{\Pi_{s'}}(\alw)\ge 1$, thus $\Pi$ satisfies (6.5).

\qua Now, we prove (6.4). If there exists $\bar h=1,...,n(\Pi)$ such that
$|\witi w(\bar h,\Pi)|<\witi{\, m\,}$, then
$\Sigma_{\Pi}(\alw)\ge \alpha_{\witi w(\bar h,\Pi)}\ge 
\bar\alpha_{min}^{\witi{\,m\,}}$, and (6.4) holds.
In the opposite case, by assumption (ii), using notation thereof,
we have $\alpha_{\tw}\ge  \bar\alpha_{min}^{\witi{\,m\,}}$
and  $\Sigma_{\bar \Pi}(\alw)\ge 1$ by (6.5) since $\bar\Pi$
 is a $\iota_s$-path. Also using (6.3), we obtain

$$\Sigma_{\Pi}(\alw)\ge \Sigma_{\psi_{\tw}(\bar \Pi)}(\alw)=
\alpha_{\tw}\Sigma_{\bar \Pi}(\alw)\ge \bar\alpha_{min}^{\witi{\,m\,}}$$

\end{proof}

\begin{theo}\label{6.3}
Under the previous assumptions,  $\alpha$ is  a metric polyratio 
on $K$ if and only if  $\Sigma_{\Pi_s}(\alw)\ge 1$ for every $s=1,...,r$.
\end{theo}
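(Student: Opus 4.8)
The plan is to prove both implications by routing through the equivalent condition that $\alpha$ be $(T,\Gamma)$-u.p., which is tied to ``metric'' from both sides: Lemma 5.1 gives metric $\Rightarrow$ $(T,\Gamma)$-u.p., while Theorem 5.7 gives $(T,\Gamma)$-u.p. $\Rightarrow$ metric. Each direction then reduces to a short manipulation of the operators $T\pal_\gamma$ via Lemma 4.2, feeding on the combinatorial work already packaged in Lemmas 6.1 and 6.2.

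For sufficiency, assume $\Sigma_{\Pi_s}(\alw)\ge 1$ for every $s$; this is exactly the hypothesis of Lemma 6.2, which therefore yields the uniform bound $\Sigma_\Pi(\alw)\ge\bar\alpha_{min}^{\witi{\,m\,}}$ for every path $\Pi$ joining two distinct points of $\vo$. To obtain $(T,\Gamma)$-u.p., fix $\iota=(j_1,j_2)\in\widehat J$ and $\gamma_1,\dots,\gamma_n\in\Gamma$, and set $\Pi:=D(\gamma_1)\circ\cdots\circ D(\gamma_n)(P_\iota)$. Since each insertion $\bar D$ preserves end-points, $\Pi$ joins $P_{j_1}$ to $P_{j_2}$, two distinct points of $\vo$, so Lemma 6.2 applies to $\Pi$. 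By Lemma 4.2 ii) together with $\widehat\Sigma_{P_\iota}(\alw)=e_\iota$ (formula (4.1)), one has $T\pal_{\gamma_1}\circ\cdots\circ T\pal_{\gamma_n}(e_\iota)=\widehat\Sigma_\Pi(\alw)$, whence $H\big(T\pal_{\gamma_1}\circ\cdots\circ T\pal_{\gamma_n}(e_\iota)\big)=\Sigma_\Pi(\alw)\ge\bar\alpha_{min}^{\witi{\,m\,}}$. This is precisely (4.5) with $c_{3,\alpha}=\bar\alpha_{min}^{\witi{\,m\,}}$, so $\alpha$ is $(T,\Gamma)$-u.p., and Theorem 5.7 yields that $\alpha$ is metric.

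For necessity I would argue by contraposition. Suppose $\Sigma_{\Pi_{s_0}}(\alw)<1$ for some $s_0$; I claim $\alpha$ fails to be $(T,\Gamma)$-u.p., hence (by Lemma 5.1) is not metric. The key is assumption (i): every edge of $\Pi_{s_0}$ satisfies $\witi\iota(h,\Pi_{s_0})=\iota_{s_0}$. Choose $\gamma\in\Gamma$ with $\gamma(\iota_{s_0})=\Pi_{s_0}$, defining $\gamma$ on the remaining $\iota=(j_1,j_2)$ by the path $(P_{j_1},P_{j_2})$. Then the sum defining $T\pal_\gamma(e_{\iota_{s_0}})$ collapses, by (i) and (4.2$'$), to $T\pal_\gamma(e_{\iota_{s_0}})=\Sigma_{\Pi_{s_0}}(\alw)\,e_{\iota_{s_0}}$, i.e. $e_{\iota_{s_0}}$ is an eigenvector of $T\pal_\gamma$ with eigenvalue $\Sigma_{\Pi_{s_0}}(\alw)$. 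Iterating, $H\big((T\pal_\gamma)^n(e_{\iota_{s_0}})\big)=\big(\Sigma_{\Pi_{s_0}}(\alw)\big)^n\to 0$ as $n\to\infty$, which violates (4.5). Hence $\Sigma_{\Pi_s}(\alw)\ge 1$ for every $s$.

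The only routine verifications are that $(P_{j_1},P_{j_2})$ is an admissible value for $\gamma\in\Gamma$---it is a strict $\von$-path (both vertices lie in $\vo$) and a $\iota$-path, since $j_1\ne j_2$ gives $P_{j_1}\ne P_{j_2}$ by (2.8) and hence $P_{j_1}\sim P_{j_2}$ via $\tw=\empti$---and that in the sufficiency step the end-point preservation of $\bar D$ guarantees $\Pi$ joins two distinct points of $\vo$. I expect no serious obstacle beyond this bookkeeping: the combinatorial heart is already borne by Lemmas 6.1--6.2, and the conceptual crux is the eigenvector identity of the necessity direction, which is exactly where assumption (i) becomes indispensable---it forces $T\pal_\gamma$ to act as a pure scaling on $e_{\iota_s}$, so that the single scalar $\Sigma_{\Pi_s}(\alw)$ governs the entire asymptotics.
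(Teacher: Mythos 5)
Your proposal is correct and follows essentially the same route as the paper: sufficiency via Lemma 6.2 plus Lemma 4.2 ii) and (4.1) to verify (4.5) with $c_{3,\alpha}=\bar\alpha_{min}^{\witi{\,m\,}}$, and necessity via the eigenvector identity $T\pal_{\gamma}(e_{\iota_s})=\Sigma_{\Pi_s}(\alw)\,e_{\iota_s}$ forced by assumption (i), combined with Lemma 5.1 and Theorem 5.7. Your contrapositive phrasing of the necessity step and the explicit check that the trivial path $P_\iota$ is an admissible value of $\gamma$ are only cosmetic differences from the paper's argument.
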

\begin{proof}
We use Corollary 5.8. Suppose there exists an $\alpha$-scaling distance
on $K$, hence  $\alpha$
is $(T,\Gamma)$-u.p.. Let $\gamma\in \Gamma$ such that
 $\gamma(\iota_s)=\Pi_s$. 
 Putting $\Pi=D(\gamma)(P_{\iota_s})$ and $\iota_s=(j_1,j_2)$, we obtain

$$\Pi=D(\gamma)(P_{\iota_s})=D(\gamma)(P_{j_1},P_{j_2})=\gamma(j_1,j_2)=\Pi_s.$$
 
Hence, in view of (4.1) and Lemma 4.2 we get

$$T\pal_{\gamma}(e_{\iota_s})=T\pal_{\gamma}\big( \widehat\Sigma_{\iota_s}(\alw)\big)$$

 $$=\widehat\Sigma_{D(\gamma)(P_{\iota_s})}(\alw)$$
$$=\sum\limits_{h=1}^{n(\Pi)}\alw (Q_{h-1,\Pi},Q_{h,\Pi}) e_{\witi \iota(h,\Pi)}=$$
$$ \sum\limits_{h=1}^{n(\Pi)}\alw (Q_{h-1,\Pi},Q_{h,\Pi}) e_{\witi \iota(h,\Pi_s)}=$$
$$\Big( \sum\limits_{h=1}^{n(\Pi_s)} \alw (Q_{h-1,\Pi_s},Q_{h,\Pi_s})\Big) e_{\iota_s}=
 \Big(\Sigma_{\Pi_s}(\alw) \Big)e_{\iota_s}.
 $$
 
 Consequently,
 
 $$ \big(T\pal_{\gamma})^n(e_{\iota_s})=
\Big( \Sigma_{\Pi_s}(\alw)\Big)^n  e_{\iota_s}
 $$
 
 for every positive integer $n$, thus, since $\alpha$  is $(T,\Gamma)$-u.p.,
 we must have $\Sigma_{\Pi_s}(\alw)\ge 1$.
For the converse, suppose $\Sigma_{\Pi_s}(\alw)\ge 1$ 
for every $s=1,...,r$, and prove
that $\alpha$ is $(T,\Gamma)$-u.p.
Let $\gamma_1,...,\gamma_n\in\Gamma$ and let $\iota\in\widehat J$.  
Of course, the path 
$D(\gamma_1)\circ\cdots\circ D(\gamma_n)(P_{\iota})$ connects two points of $\vo$.
 Thus, 
by Lemma 6.2,  we have
$$\Sigma_{D(\gamma_1)\circ\cdots\circ D(\gamma_n)(P_{\iota})}(\alw)\ge 
\bar\alpha_{min}^{\witi{\,m\,}}.$$

By Lemma 4.2 ii) we have

$$\ H\Big(T\pal_{\gamma_1}\circ\cdots\circ  
T\pal_{\gamma_n}(e_{\iota})\Big)$$
$$= H\Big(T\pal_{\gamma_1}\circ\cdots\circ  T\pal_{\gamma_n}
 \big(\widehat\Sigma_{P_{\iota}}(\alw)\big)\Big)=$$
$$H\Big(\widehat\Sigma_{D(\gamma_1)\circ\cdots\circ D(\gamma_n)(P_{\iota})}(\alw)\Big)$$
$$=\Sigma_{D(\gamma_1)\circ\cdots\circ D(\gamma_n)(P_{\iota})}(\alw)\ge
\bar\alpha_{min}^{\witi{\,m\,}}
$$
 
 and thus $\alpha$ is $(T,\Gamma)$-u.p..
\end{proof}

\smallskip
We now apply Theorem 6.3 to two examples. For the Gasket
with three $1$-cells $V_1$, $V_2$, $V_3$, let $P_1$, $P_2$, $P_3$ be the three
fixed points of the maps, let $Q_{j_1,j_2}=Q_{j_2,j_1}:=\psi_{j_1}(P_{j_2})
=\psi_{j_2}(P_{j_1})$ when $j_1,j_2=1,2,3$, $j_1\ne j_2$.
It is simple to verify
that the six paths of the form
$(P_{j_1}, Q_{j_1,j_2}, P_{j_2})$, $j_1,j_2=1,2,3$, $j_1\ne j_2$,
satisfy (i), (ii), (iii), with $\witi{\, m\,}=0$,
thus, in view of Theorem 6.3,
 $\alpha$ is a metric polyratio
 on the Gasket  if and only if we have

$$\alpha_1+\alpha_2\ge 1, \quad \alpha_1+\alpha_3\ge 1
\quad \alpha_2+\alpha_3\ge 1. \eqno(6.6)$$

\qua  In the Vicsek set,
let $\psi_i$, $i=1,2,3,4,5$ be the contractions defining it, and we order
them in such a way that
$\psi_5$ the contraction that fixes the center. Let $P_j$ be the fixed points
of $V_j$ for $j=1,2,3,4$. Choose the order so that $P_1$ is opposite
to $P_3$ and $P_2$ is opposite to $P_4$. Let $Q_j$ the only point
in $V_j\cap V_5$. Now we take the four
paths of the form $\Pi_j:=(P_j, Q_j,Q_{j'},P_{j'})$ when $j=1,2,3,4$
and $P_{j'}$ is opposite to $P_j$.  It is simple to verify that such paths satisfy
(i), (ii), (iii), with $\witi{\, m\,}=1$. Thus, by Theorem 6.3,
$\alpha$ is a metric polyratio
 on the Vicsek set   if and only if  we have

 $$\alpha_1+\alpha_3+\alpha_5\ge 1, 
 \quad \alpha_2+\alpha_4+\alpha_5\ge 1.\eqno (6.7)$$

Considerations similar could be extended to other fractals.
However, in order to have simple conditions like (6.6) or (6.7)
the structure of the fractal should be simple. In most cases,
I expect that it could be hard to give simple necessary and sufficient conditions
for having an $\alpha$-scaling distance.

\end{document}